\documentclass[10pt]{amsart}
\usepackage{graphicx}
\usepackage{amscd}
\usepackage{amsmath}
\usepackage{amsthm}
\usepackage{amsfonts}
\usepackage{amssymb}
\usepackage{mathrsfs}
\usepackage{bm}
\usepackage{enumerate}
\usepackage{amsrefs}
\usepackage{xcolor}
\usepackage[colorlinks, citecolor=blue, linkcolor=red, pdfstartview=FitB]{hyperref}
\usepackage[latin1]{inputenc}

\usepackage{marginnote}	
\usepackage{soul} 			


\newcommand{\bC}{{\mathbb{C}}}
\newcommand{\bD}{{\mathbb{D}}}

\newcommand{\bM}{{\mathbb{M}}}
\newcommand{\bN}{{\mathbb{N}}}

\newcommand{\bT}{{\mathbb{T}}}

  \newcommand{\A}{{\mathcal{A}}}
  \newcommand{\B}{{\mathcal{B}}}
  \newcommand{\C}{{\mathcal{C}}}
  \newcommand{\D}{{\mathcal{D}}}
  \newcommand{\E}{{\mathcal{E}}}
  \newcommand{\F}{{\mathcal{F}}}
  \newcommand{\G}{{\mathcal{G}}}
\renewcommand{\H}{{\mathcal{H}}}
  
  \newcommand{\J}{{\mathcal{J}}}
  \newcommand{\K}{{\mathcal{K}}}  

  \newcommand{\M}{{\mathcal{M}}}
  
\renewcommand{\O}{{\mathcal{O}}}

  \newcommand{\R}{{\mathcal{R}}}
\renewcommand{\S}{{\mathcal{S}}}
  \newcommand{\T}{{\mathcal{T}}}
  \newcommand{\U}{{\mathcal{U}}}

\newcommand{\fA}{{\mathfrak{A}}}

\newcommand{\fB}{{\mathfrak{B}}}

\newcommand{\fF}{{\mathfrak{F}}}

\newcommand{\fJ}{{\mathfrak{J}}}
\newcommand{\fK}{{\mathfrak{K}}}

\newcommand{\fL}{{\mathfrak{L}}}
\newcommand{\fM}{{\mathfrak{M}}}

\newcommand{\fQ}{{\mathfrak{Q}}}
\newcommand{\fR}{{\mathfrak{R}}}

\newcommand{\fS}{{\mathfrak{S}}}

\newcommand{\fT}{{\mathfrak{T}}}

\newcommand{\rA}{\mathrm{A}}

\newcommand{\rC}{\mathrm{C}}


\renewcommand{\phi}{\varphi}

\newcommand{\upchi}{{\raise.35ex\hbox{$\chi$}}}



\newcommand{\ol}{\overline}

\newcommand{\sot}{\textsc{sot}}



\newtheorem{lemma}{Lemma}[section]
\newtheorem{theorem}[lemma]{Theorem}
\newtheorem{proposition}[lemma]{Proposition}
\newtheorem{corollary}[lemma]{Corollary}

\theoremstyle{definition}

\newtheorem{remark}[lemma]{Remark}
\newtheorem{question}{Question}
\newtheorem{example}{Example}

\date{\today}
\author{Ian Thompson}

\address{Department of Mathematics, University of Manitoba, Winnipeg, Manitoba, Canada R3T 2N2}

\email{thompsoi@myumanitoba.ca\vspace{-2ex}}
\thanks{The author was partially supported by an NSERC CGS-M Scholarship.}

\title[Maximal RFD $\rC^*$-covers]{Maximal $\rC^*$-covers and residual finite-dimensionality}
\begin{document}
\begin{abstract}
We study residually finite-dimensional (or RFD) operator algebras which may not be self-adjoint. An operator algebra may be RFD while simultaneously possessing completely isometric representations whose generating $\rC^*$-algebra is not RFD. This has provided many hurdles in characterizing residual finite-dimensionality for operator algebras. To better understand the elusive behaviour, we explore the $\rC^*$-covers of an operator algebra. First, we equate the collection of $\rC^*$-covers with a complete lattice arising from the spectrum of the maximal $\rC^*$-cover. This allows us to identify a largest RFD $\rC^*$-cover whenever the underlying operator algebra is RFD. The largest RFD $\rC^*$-cover is shown to be similar to the maximal $\rC^*$-cover in several different facets and this provides supporting evidence to a previous query of whether an RFD operator algebra always possesses an RFD maximal $\rC^*$-cover. In closing, we present a non self-adjoint version of Hadwin's characterization of separable RFD $\rC^*$-algebras.
\end{abstract}
\maketitle

\section{Introduction} \label{S:Introduction}

The characterization of a $\rC^*$-algebra can often be reduced to studying its representation theory. To this end, a natural choice is to investigate the $\rC^*$-algebras which may be completely understood by their finite-dimensional $*$-representations. This philosophy underlies nuclearity and quasidiagonality of $\rC^*$-algebras, which have generated a flurry of recent progress (see \cite{gong2014classification}, \cite{tikuisis2017quasidiagonality}). However, where those notions constitute an approximation property which is `internal' to the $\rC^*$-algebra, we study a finite-dimensional approximation property external to the $\rC^*$-algebra. 

Namely, we study the notion of residual finite-dimensionality. Residually finite-dimensional $\rC^*$-algebras constitute a truly significant class of the quasidiagonal $\rC^*$-algebras and possess numerous equivalent characterizations (see \cite{archbold1995residually}, \cite{clouatre2020finite}, \cite{courtney2019elements}, \cite{exel1992finite}, \cite{hadwin2014lifting}). But instead, we will consider residual finite-dimensionality of (possibly non self-adjoint) operator algebras. We say that an operator algebra is \emph{residually finite-dimensional} (or RFD) if it may be embedded into a direct product of finite-dimensional $\rC^*$-algebras.  This concept was recently introduced in \cite{clouatre2019residually} and there are still only a few characterizations of RFD operator algebras (see \cite{clouatre2021finite}, \cite{clouatre2019residually}).

Operator algebras possess quite idiosyncratic behaviour up to complete isometric isomorphism and this poses many difficulties in characterizing residual finite-dimensionality. We will clarify this hurdle. Suppose $\A\subset B(\H)$ is an operator algebra. Upon passing through a completely isometric representation $\iota:\A\rightarrow B(\K)$, one may find an isomorphic copy of the same operator algebra. However, there may be a stark difference between the $\rC^*$-algebras generated by $\A$ and $\iota(\A)$, which we denote by $\rC^*(\A)$ and $\rC^*(\iota(\A))$, respectively. It is not necessarily true that $\rC^*(\A)$ and $\rC^*(\iota(\A))$ are $*$-isomorphic. It can even be the case that $\rC^*(\A)$ is RFD whereas $\rC^*(\iota(\A))$ possesses no finite-dimensional $*$-representations (Example \ref{E:cuntz}). In this sense, it is not clear how to detect residual finite-dimensionality of operator algebras.

We consider the collection of all $\rC^*$-covers of an operator algebra $\A$ to study residual finite-dimensionality. A \emph{$\rC^*$-cover} of $\A$ is a pair $(\fA, \iota)$ where $\iota$ is a completely isometric representation of $\A$ and $\fA = \rC^*(\iota(\A))$. In the literature on $\rC^*$-covers, there are two well-studied covers which dominate the theory: the minimal $\rC^*$-cover and the maximal $\rC^*$-cover. These two $\rC^*$-covers always exist for any operator algebra. Existence of the maximal $\rC^*$-cover is shown in \cite[Proposition 2.4.2]{blecher2004operator} and will be quite relevant for our purposes. On the other hand, existence of the minimal $\rC^*$-cover (or $\rC^*$-envelope) has been the basis of a research project spanning nearly half a century (see \cite{arveson2008noncommutative}, \cite{arveson1969subalgebras}, \cite{davidson2015choquet}, \cite{dritschel2005boundary}, \cite{hamana1979injective}, \cite{muhly1998algebraic}). The minimal $\rC^*$-cover will make appearances but, for our purposes, we focus heavily on the maximal $\rC^*$-cover.

The maximal $\rC^*$-cover of an operator algebra $\A$, denoted $(\rC^*_{max}(\A), \mu)$, satisfies the universal property that every representation of $\A$ lifts to a unique $*$-representation of $\rC^*_{max}(\A)$. Hence, the representations of $\A$ are in one-to-one correspondence with the $*$-representations of $\rC^*_{max}(\A)$. As residual finite-dimensionality is encoded in the structural properties of spaces of representations, one could ask whether residual finite-dimensionality of operator algebras is closely linked to that of their maximal $\rC^*$-covers. This provides us with our main inquiry:

\begin{question}\label{Q:main}
If $\A$ is an RFD operator algebra, then is $\rC^*_{max}(\A)$ necessarily RFD?
\end{question}

There have been several instances for which Question \ref{Q:main} has been answered in the affirmative \cite{clouatre2021finite}, \cite{clouatre2019residually}. Most notably, it was shown that $\rC^*_{max}(\A)$ is RFD whenever $\A$ is finite-dimensional. The maximal $\rC^*$-cover is also RFD for Popescu's disk algebra, the Schur-Agler class of functions, as well as some semigroup algebras and spaces of analytic functions. We remark that Question \ref{Q:main} is also related to a result of Pestov \cite[Theorem 4.1]{pestov1994operator}.

Now we outline the structure of the paper and our approach to the main question. Within Section \ref{S:prelim}, we introduce background material on the spectrum of a $\rC^*$-algebra, $\rC^*$-covers and residual finite-dimensionality.

In Section \ref{S:TopOrder}, we study a natural partial ordering on the collection of all $\rC^*$-covers of a fixed operator algebra. This was previously observed in \cite{hamidi2019admissibility}. Under a natural interpretation, we show that the partial ordering is equivalent to the partial ordering given by inclusion of the spectra of the $\rC^*$-covers (Proposition \ref{P:toporder}). Moreover, this produces an isomorphism of complete lattices (Theorem \ref{T:LatticeIsom}) and allows us to equate topological statements on the spectrum of the maximal $\rC^*$-cover with order-theoretic statements on the collection of $\rC^*$-covers. This identification supplies us with a natural framework to study residual finite-dimensionality in Section \ref{S:RFDmax}.

Fix an RFD operator algebra $\A$. In Subsection \ref{SS:abstract}, we derive the existence of a largest RFD $\rC^*$-cover of $\A$ relative to the partial ordering of $\rC^*$-covers (Theorem \ref{T:existRFDmax}). This is referred to as the \emph{RFD-maximal $\rC^*$-cover}, denoted $(\fR(\A), \upsilon)$. As a consequence, Question \ref{Q:main} has an affirmative answer precisely when the RFD-maximal $\rC^*$-cover is the maximal $\rC^*$-cover (Corollary \ref{C:RFDmaxiffmaxRFD}). In Theorem \ref{T:topRFDmax}, we identify the spectrum of the RFD-maximal $\rC^*$-cover relative to the maximal $C^*$-cover.

In Subsection \ref{SS:concrete}, we provide an explicit representation of the RFD-maximal $\rC^*$-cover. Typically, the embedding $\mu:\A\rightarrow\rC^*_{max}(\A)$ is taken to be an appropriately large direct sum of completely contractive representations of $\A$. We show that the RFD-maximal $\rC^*$-cover can be constructed in an analogous way and satisfies a universal property (Theorem \ref{T:concreterepn}):

\begin{theorem}\label{T:intro1}
Let $\A$ be an RFD operator algebra. For any representation $\rho:\A\rightarrow B(\H_\rho)$ on a finite-dimensional Hilbert space, there is a unique $*$-representation $\theta:\fR(\A)\rightarrow B(\H_\rho)$ such that $\theta\circ\upsilon = \rho$. Moreover, $(\fR(\A), \upsilon)$ is minimal among all $\rC^*$-covers of $\A$ which satisfy this property.
\end{theorem}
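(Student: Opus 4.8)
The plan is to establish the existence of $\theta$ first, then its uniqueness, and finally the minimality clause, relying throughout on Theorem \ref{T:existRFDmax} (that $(\fR(\A),\upsilon)$ is the largest RFD $\rC^*$-cover in the partial ordering of Section \ref{S:TopOrder}) rather than on the spectral machinery, though I will also note how Theorem \ref{T:topRFDmax} gives an alternative route to existence.

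For existence, given a (completely contractive) representation $\rho:\A\to B(\H_\rho)$ with $\dim\H_\rho<\infty$, I would set $\sigma:=\upsilon\oplus\rho:\A\to\fR(\A)\oplus B(\H_\rho)$ and $\fB:=\rC^*(\sigma(\A))$. Since $\upsilon$ is completely isometric and both $\upsilon$ and $\rho$ are completely contractive, $\sigma$ is completely isometric, so $(\fB,\sigma)$ is a $\rC^*$-cover of $\A$. Now $\fB$ is a $\rC^*$-subalgebra of $\fR(\A)\oplus B(\H_\rho)$; as $\fR(\A)$ is RFD and $B(\H_\rho)$ is finite-dimensional (hence RFD), their direct sum is RFD, and residual finite-dimensionality passes to $\rC^*$-subalgebras, so $\fB$ is RFD. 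By maximality of $(\fR(\A),\upsilon)$ among RFD $\rC^*$-covers, $(\fR(\A),\upsilon)$ dominates $(\fB,\sigma)$ in the ordering, i.e.\ there is a $*$-epimorphism $\pi:\fR(\A)\to\fB$ with $\pi\circ\upsilon=\sigma$. Composing with the (restriction of the) second-coordinate projection $p_2:\fR(\A)\oplus B(\H_\rho)\to B(\H_\rho)$ yields a $*$-representation $\theta:=p_2\circ\pi:\fR(\A)\to B(\H_\rho)$ with $\theta\circ\upsilon=p_2\circ\sigma=\rho$. Uniqueness is immediate: $\upsilon(\A)$ generates $\fR(\A)$ as a $\rC^*$-algebra, so any two $*$-representations agreeing on $\upsilon(\A)$ agree on the $*$-algebra it generates and, by continuity, on all of $\fR(\A)$.

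For minimality, let $(\fC,\iota)$ be any $\rC^*$-cover of $\A$ through which every finite-dimensional representation of $\A$ factors; I must produce a $*$-epimorphism $\fC\to\fR(\A)$ intertwining $\iota$ and $\upsilon$. Since $\fR(\A)$ is RFD, fix a separating family $\{\psi_i:\fR(\A)\to B(\H_i)\}_{i\in I}$ of finite-dimensional $*$-representations (indexed by a set). Each $\psi_i\circ\upsilon$ is a finite-dimensional representation of $\A$, so by hypothesis there is a $*$-representation $\theta_i:\fC\to B(\H_i)$ with $\theta_i\circ\iota=\psi_i\circ\upsilon$. Put $\Psi:=\bigoplus_i\psi_i:\fR(\A)\to\prod_iB(\H_i)$, which is injective because the family separates points, and $\Theta:=\bigoplus_i\theta_i:\fC\to\prod_iB(\H_i)$. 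On $\A$ we have $\Theta\circ\iota=\Psi\circ\upsilon$, so passing to generated $\rC^*$-algebras gives $\Theta(\fC)=\rC^*(\Theta(\iota(\A)))=\rC^*(\Psi(\upsilon(\A)))=\Psi(\fR(\A))$; hence $\Psi^{-1}\circ\Theta:\fC\to\fR(\A)$ is a well-defined surjective $*$-homomorphism with $(\Psi^{-1}\circ\Theta)\circ\iota=\upsilon$. Thus $(\fC,\iota)$ dominates $(\fR(\A),\upsilon)$; since the existence part already shows $(\fR(\A),\upsilon)$ itself has the factoring property, it is the least element among $\rC^*$-covers with this property, in particular minimal.

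I expect the whole argument to be largely bookkeeping once the framework of Section \ref{S:TopOrder} is available; the only points requiring care are the permanence properties of residual finite-dimensionality (stability under finite direct sums and passage to $\rC^*$-subalgebras), the verification that $\sigma=\upsilon\oplus\rho$ is completely isometric so that $(\fB,\sigma)$ genuinely is a $\rC^*$-cover, the set-theoretic legitimacy of the index set $I$, and keeping the direction of the partial order straight. For a spectral proof of existence one could instead lift $\rho$ to a finite-dimensional $*$-representation $\td\rho$ of $\rC^*_{max}(\A)$ and observe, using the identification of the spectrum of $\fR(\A)$ inside that of $\rC^*_{max}(\A)$ from Theorem \ref{T:topRFDmax}, that the kernel of the canonical $*$-epimorphism $\rC^*_{max}(\A)\to\fR(\A)$ is contained in $\ker\td\rho$ (the irreducible subrepresentations of $\td\rho$ are finite-dimensional, hence lie in that spectrum), so $\td\rho$ descends to the required $\theta$; there the genuine structural input is Theorem \ref{T:topRFDmax}, and behind it Theorem \ref{T:existRFDmax}.
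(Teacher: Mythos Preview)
Your proof is correct but follows a genuinely different route from the paper's. The paper (Theorem \ref{T:concreterepn}) first builds a concrete $\rC^*$-cover $(\fQ,\iota)$ by taking $\iota=\bigoplus_{\rho\in\F}\rho$ over the set $\F$ of all representations $\A\to\bM_r$, verifies directly that finite-dimensional representations factor through it via coordinate projections, and only at the end identifies $(\fQ,\iota)$ with $(\fR(\A),\upsilon)$ using Corollary \ref{C:allFDfactor} (hence, implicitly, the spectral description of Theorem \ref{T:topRFDmax}). You instead work with $(\fR(\A),\upsilon)$ from the outset and extract existence of $\theta$ purely from the order-theoretic maximality of Theorem \ref{T:existRFDmax}, via the neat trick of noting that $(\rC^*(\upsilon\oplus\rho),\,\upsilon\oplus\rho)$ is again an RFD $\rC^*$-cover and hence dominated by $(\fR(\A),\upsilon)$. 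For minimality the two arguments are closer in spirit --- both take a direct sum of lifts --- but the paper sums over all finite-dimensional representations of $\A$, whereas you sum over a separating family of finite-dimensional $*$-representations of $\fR(\A)$; your version makes the role of the RFD hypothesis on $\fR(\A)$ more transparent. What the paper's approach buys is an explicit, coordinate-by-coordinate model of $\fR(\A)$ (the point of the subsection titled ``Concrete Characterization''); what your approach buys is a cleaner logical dependency, needing only Theorem \ref{T:existRFDmax} and elementary permanence properties of RFD $\rC^*$-algebras, with no appeal to the spectral machinery.
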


As previously noted, to answer Question \ref{Q:main} in the affirmative we must show that the RFD-maximal $\rC^*$-cover and the maximal $\rC^*$-cover coincide. In Subsection  \ref{SS:RFDmaxAlgProp}, we exhibit several aspects of the RFD-maximal $\rC^*$-cover which are reminiscent of known properties of the maximal $\rC^*$-cover. Thus, this is consistent with Question \ref{Q:main} having an affirmative answer. For unital operator algebras, the maximal $\rC^*$-cover is known to respect countable direct sum and the free product of finitely many operator algebras \cite[Proposition 2.2]{blecher1999modules}, \cite[Theorem 5.2]{clouatre2019residually}. We show that the RFD-maximal $\rC^*$-cover also preserves these constructions (Theorems \ref{T:RFDmaxDirectSum} and \ref{T:freeproduct}). In turn, this provides supporting evidence for an affirmative answer to Question \ref{Q:main}.

\begin{theorem}\label{T:intro2}
The following statements hold:\begin{enumerate}[{\rm (i)}]
\item If $\A_n, n\in\bN,$ are unital RFD operator algebras, $\fR(\bigoplus_{n=1}^\infty \A_n)\cong \bigoplus_{n=1}^\infty \fR(\A_n)$.
\item If $\A, \B$ are unital RFD operator algebras, then $\fR(\A*\B)\cong \fR(\A)*\fR(\B).$
\end{enumerate}
\end{theorem}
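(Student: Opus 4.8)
The plan is to exploit the universal property of the RFD-maximal $\rC^*$-cover established in Theorem \ref{T:intro1} (i.e. Theorem \ref{T:concreterepn}): $(\fR(\A),\upsilon)$ is the minimal $\rC^*$-cover through which every finite-dimensional representation of $\A$ factors uniquely. Both claims should follow by checking that the candidate $\rC^*$-algebras on the right-hand side possess exactly this universal property with respect to the operator algebra on the left, and then invoking uniqueness.

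For part (i), first I would recall the known fact (\cite[Proposition 2.2]{blecher1999modules}) that $\rC^*_{max}$ respects countable direct sums of unital operator algebras; the key structural input is that a representation of $\bigoplus_{n=1}^\infty \A_n$ on a Hilbert space decomposes, via the central projections coming from the units, as a direct sum of representations $\rho_n$ of the summands $\A_n$, with only finitely many acting nondegenerately when the ambient space is finite-dimensional. Given a finite-dimensional representation $\rho:\bigoplus_n \A_n\to B(\H_\rho)$, decompose $\H_\rho=\bigoplus_n \H_n$ accordingly; each $\rho_n$ is then a finite-dimensional representation of $\A_n$, hence factors uniquely through $\fR(\A_n)$, and assembling these gives a $*$-representation of $\bigoplus_n \fR(\A_n)$ factoring $\rho$. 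Conversely $\bigoplus_n \fR(\A_n)$ is itself a $\rC^*$-cover of $\bigoplus_n \A_n$ (it is RFD, being a countable direct sum of RFD algebras, and the embedding is completely isometric since each $\upsilon_n$ is), so minimality in Theorem \ref{T:intro1} forces the canonical surjection $\bigoplus_n \fR(\A_n)\twoheadrightarrow \fR(\bigoplus_n \A_n)$ coming from the universal property to be an isomorphism — one checks it is injective because the finite-dimensional representations separate points of $\bigoplus_n \fR(\A_n)$.

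For part (ii), the argument is parallel but uses the universal property of the unital free product of $\rC^*$-algebras: a $*$-representation of $\fR(\A)*\fR(\B)$ is the same data as a pair of $*$-representations of $\fR(\A)$ and $\fR(\B)$ on a common Hilbert space, and on finite-dimensional spaces these are in turn the same as a pair of finite-dimensional representations of $\A$ and of $\B$, which is the same as a finite-dimensional representation of $\A*\B$. So $\fR(\A)*\fR(\B)$ satisfies the universal property of Theorem \ref{T:intro1} for $\A*\B$; combined with the fact that $\fR(\A)*\fR(\B)$ is a $\rC^*$-cover of $\A*\B$ — here one needs that the free product of two RFD $\rC^*$-algebras is RFD, which is a theorem of Exel--Loring \cite{exel1992finite}, and that the embedding is completely isometric — minimality yields the isomorphism.

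The main obstacle I anticipate is the ``$\rC^*$-cover'' bookkeeping rather than the universal-property juggling: one must verify in each case that the embedding of the operator algebra into the proposed right-hand $\rC^*$-algebra is genuinely completely isometric and that its image generates, and — more delicately — that the canonical $*$-homomorphism furnished by the universal property is injective, i.e. that finite-dimensional $*$-representations separate points of $\bigoplus_n \fR(\A_n)$ and of $\fR(\A)*\fR(\B)$. The first of these separation statements is immediate from residual finite-dimensionality of the summands; the second relies on residual finite-dimensionality of the free product, so the Exel--Loring theorem is doing real work and should be cited explicitly. Everything else reduces to the decomposition-of-representations lemmas already implicit in the cited treatments of $\rC^*_{max}$ for direct sums and free products.
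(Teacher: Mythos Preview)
Your proposal is correct and follows essentially the same strategy as the paper: verify that the candidate $\rC^*$-algebra is an RFD $\rC^*$-cover through which every finite-dimensional representation factors, then invoke the characterization of $\fR(\cdot)$. The only difference is packaging --- you invoke the universal property of Theorem \ref{T:concreterepn} directly at the operator-algebra level, whereas the paper routes through an auxiliary lemma (Lemma \ref{L:fdfactorisomorphism}) that checks the same factoring condition for finite-dimensional $*$-representations of $\rC^*_{max}$ and uses the known isomorphisms $\rC^*_{max}(\bigoplus_n\A_n)\cong\bigoplus_n\rC^*_{max}(\A_n)$ and $\rC^*_{max}(\A*\B)\cong\rC^*_{max}(\A)*\rC^*_{max}(\B)$; your route is slightly more direct, and both rely on Exel--Loring for the free-product case.
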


In Section \ref{S:RFDrepns}, we study the representation theory of RFD operator algebras and relate this to properties of the RFD-maximal $\rC^*$-cover. Due to work of Exel and Loring, a $\rC^*$-algebra is RFD precisely when all $*$-representations are point-strong limits of (possibly degenerate) finite-dimensional representations \cite[Theorem 2.4]{exel1992finite}. In this setting, the point-strong and point-strong$*$ topologies coincide. This subtlety regarding the adjoint is responsible for some of the phenomena we witness.

Specifically, the possible discrepancy of the point-strong and point-strong$*$ topologies for operator algebras results in some uncertainty over how to interpret this condition. Here we consider two possible candidates: those representations which are point-strong limits of finite-dimensional representations are referred to as \emph{residually finite-dimensional} representations (or RFD representations). Alongside RFD representations, we consider representations which are point-strong$*$ limits of finite-dimensional representations, called \emph{$*$-residually finite-dimensional} representations (or $*$-RFD representations).

Recently, Clou\^{a}tre and Dor-On showed that $\rC^*_{max}(\A)$ is RFD precisely when every representation of $\A$ is $*$-RFD \cite[Theorem 3.3]{clouatre2021finite}. Utilizing this result, we are presented with a natural intermediate question. Indeed, for Question \ref{Q:main} to have an affirmative answer, it must be the case that RFD and $*$-RFD representations of an RFD operator algebra coincide. This provides us with a suitable motivation for considering both classes of representations.

We wish to study which $\rC^*$-covers RFD and $*$-RFD representations lift to. Similar to the concrete construction of the RFD-maximal $\rC^*$-cover, we may build two $\rC^*$-covers concretely from appropriately large direct sums of RFD representations and $*$-RFD representations. We denote the associated $\rC^*$-covers by $(\fR_s(\A), \upsilon_s)$ and $(\fR_{*s}(\A), \upsilon_{*s})$, respectively. In Corollary \ref{C:RFDmaxLiftsStarRFD}, we show that $(\fR_{*s}(\A), \upsilon_{*s})$ is the RFD-maximal $\rC^*$-cover. Consequently, the $*$-RFD representations of $\A$ are precisely those representations which lift to the RFD-maximal $\rC^*$-cover. We then obtain equivalent conditions for RFD and $*$-RFD representations of an RFD operator algebra to coincide (Theorem \ref{T:starRFDequivalence}). For instance, this is equivalent to $(\fR_s(\A), \upsilon_s)$ being the RFD-maximal $\rC^*$-cover.
 
In Section \ref{S:Hadwin}, we study Hadwin's characterization of separable RFD $\rC^*$-algebras \cite{hadwin2014lifting} and provide a non self-adjoint version of their result. We recount the details.

Let $\{e_n: n\in\bN\}$ be an orthonormal basis for $\ell^2$. For each $n\in\bN$, let $P_n$ be the orthogonal projection onto the linear span of $\{ e_1, \ldots, e_n\}$ and let $\M_n = P_n B(\ell^2)P_n$. Further, let $\fB$ be the $\rC^*$-subalgebra of $\prod_{n=1}^\infty\M_n$ consisting of all sequences $(T_n)_{n\geq1}$ which converge $*$-strongly in $B(\ell^2)$. Let $\pi:\fB\rightarrow B(\ell^2)$ denote the $*$-representation defined by $\pi((T_n)) = *\sot\lim_n T_n.$ A representation $\rho:\A\rightarrow B(\ell^2)$ is \emph{$*$-liftable in the sense of Hadwin} if there is a representation $\tau:\A\rightarrow\fB$ such that $\pi\circ\tau = \rho.$

Hadwin showed that a separable $\rC^*$-algebra $\fA$ is RFD if and only if every unital $*$-representation $\sigma: \widetilde{\fA}\rightarrow B(\ell^2)$ is $*$-liftable \cite[Theorem 11]{hadwin2014lifting}. We show the following in Theorem \ref{T:nonSAHadwin}:

\begin{theorem}\label{T:intro3}
Let $\A$ be a separable operator algebra. Then, $\rC^*_{max}(\A)$ is an RFD $\rC^*$-algebra if and only if every unital representation $\rho:\widetilde{\A}\rightarrow B(\ell^2)$ is $*$-liftable in the sense of Hadwin.
\end{theorem}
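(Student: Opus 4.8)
The plan is to derive Theorem~\ref{T:intro3} from Hadwin's theorem \cite[Theorem 11]{hadwin2014lifting} applied to the $\rC^*$-algebra $\fA := \rC^*_{max}(\A)$, transporting the resulting criterion back to $\A$ through the universal property of the maximal $\rC^*$-cover and the standard unitization correspondences. First I would record that $\rC^*_{max}(\A)$ is separable: since $\mu$ is (completely) isometric, $\mu(\A)$ is a separable subset, and the $\rC^*$-algebra it generates, namely $\rC^*_{max}(\A)$, is separable. Hadwin's theorem then applies and states that $\rC^*_{max}(\A)$ is RFD if and only if every unital $*$-representation $\sigma : \widetilde{\rC^*_{max}(\A)} \to B(\ell^2)$ is $*$-liftable in the sense of Hadwin. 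It therefore suffices to produce a bijection between the unital representations $\rho : \widetilde{\A} \to B(\ell^2)$ and the unital $*$-representations $\sigma : \widetilde{\rC^*_{max}(\A)} \to B(\ell^2)$ under which $*$-liftability corresponds in both directions.

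For the bijection I would chain three standard correspondences: (i) by the unitization construction for operator algebras, every representation of $\A$ on $B(\ell^2)$ extends uniquely to a unital representation of $\widetilde{\A}$ on $B(\ell^2)$, with restriction as inverse; (ii) by the universal property of $(\rC^*_{max}(\A), \mu)$, the representations $\A \to B(\ell^2)$ are in bijection with the $*$-representations $\rC^*_{max}(\A) \to B(\ell^2)$; (iii) a (possibly degenerate) $*$-representation of a $\rC^*$-algebra $\fC$ on $B(\ell^2)$ extends uniquely to a unital $*$-representation of $\widetilde{\fC}$, again with restriction as inverse. Composing these, $\rho$ corresponds to the unique unital $*$-representation $\sigma$ with $\sigma \circ \widetilde{\mu} = \rho$, where $\widetilde{\mu} : \widetilde{\A} \to \widetilde{\rC^*_{max}(\A)}$ is the (unital, completely isometric) unitization of $\mu$.

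The remaining point is that $*$-liftability is preserved by this bijection, and here one uses that Hadwin's algebra $\fB$ is \emph{unital}, with unit the sequence of identity matrices, which $\pi$ maps to $I$. If $\rho$ admits a Hadwin lift $\tau : \widetilde{\A} \to \fB$ with $\pi \circ \tau = \rho$, then $\tau|_\A : \A \to \fB$ is a representation, so by the universal property it factors as $\eta \circ \mu$ for a $*$-representation $\eta$; since $\mu(\A)$ generates $\rC^*_{max}(\A)$ as a $\rC^*$-algebra and $\fB$ is a $\rC^*$-subalgebra, $\eta$ actually maps $\rC^*_{max}(\A)$ into $\fB$, so its unital extension $\widetilde{\eta} : \widetilde{\rC^*_{max}(\A)} \to \fB$ (sending the adjoined unit to the unit of $\fB$) is well defined, and comparing $\pi \circ \widetilde{\eta}$ with $\sigma$ on the generating set $\mu(\A)$ and on the unit forces $\pi \circ \widetilde{\eta} = \sigma$; thus $\sigma$ is $*$-liftable. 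Conversely, if $\sigma$ admits a Hadwin lift $\widetilde{\tau} : \widetilde{\rC^*_{max}(\A)} \to \fB$, then $\tau := \widetilde{\tau} \circ \widetilde{\mu} : \widetilde{\A} \to \fB$ is a completely contractive homomorphism with $\pi \circ \tau = \sigma \circ \widetilde{\mu} = \rho$, so $\rho$ is $*$-liftable in the sense of Hadwin. Combining the two implications with Hadwin's theorem and the bijection above yields the stated equivalence.

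The main obstacle I anticipate is the careful bookkeeping of the unitizations and, in particular, verifying that the $*$-representation delivered by the universal property from a $\fB$-valued representation genuinely takes values in $\fB$ rather than merely in some $B(\mathcal H)$ or in $\widetilde{\fB}$; equally one must keep track of the possibly degenerate restrictions of the various unital representations to $\A$ and to $\rC^*_{max}(\A)$, which is precisely the adjoint-related subtlety emphasized earlier in the paper. Once these are handled, the remaining ingredients --- the unitization construction for operator algebras, the universal property of $\rC^*_{max}(\A)$, and Hadwin's theorem --- fit together routinely.
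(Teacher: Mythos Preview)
Your argument is correct, and the forward direction coincides with the paper's: both lift a given unital $\rho:\widetilde{\A}\to B(\ell^2)$ to a unital $*$-representation of $\rC^*_{max}(\widetilde{\A})\cong\widetilde{\rC^*_{max}(\A)}$, apply Hadwin's theorem there, and compose the resulting Hadwin lift with $\widetilde{\mu}$. The converse, however, is handled quite differently. The paper does \emph{not} invoke Hadwin's theorem for $(\Leftarrow)$; instead it reads off from a Hadwin lift $\tau=(\tau_n)$ of $\rho$ that the components $\tau_n$ witness $\rho$ as a $*$-RFD representation, then reduces an arbitrary representation of $\widetilde{\A}$ to the separable case by decomposing into cyclic pieces, and finally appeals to the Clou\^atre--Dor-On criterion \cite[Theorem 3.3]{clouatre2021finite} to conclude that $\rC^*_{max}(\A)$ is RFD. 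Your route is more symmetric and economical: you push the Hadwin lift $\tau$ of $\rho$ up to a $*$-homomorphic Hadwin lift $\widetilde{\eta}$ of the corresponding $\sigma$ via the universal property (the key observation being $\eta(\rC^*_{max}(\A))=\rC^*(\tau(\A))\subset\fB$), and then apply Hadwin's theorem directly to $\rC^*_{max}(\A)$. This avoids both the $*$-RFD machinery and the cyclic-decomposition reduction, at the cost of not tying the result back into the $*$-RFD framework developed in Section~\ref{S:RFDrepns}.
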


\textbf{Acknowledgements.} The author would like to thank their advisor, Rapha\"{e}l Clou\^{a}tre, who was tremendously helpful during the preparation of this manuscript.

\section{Preliminaries} \label{S:prelim}

An \emph{operator algebra} $\A$ is a subalgebra of $B(\H)$ where $\H$ is some Hilbert space. The collection of $n\times n$ complex-valued matrices will be denoted by $\bM_n$ while the collection of $n\times n$ matrices with entries in $\A$ is denoted by $\bM_n(\A)$. For a linear map $\rho: \A\rightarrow B(\H_\rho)$, we let \[\rho^{(n)}: \bM_n(\A)\rightarrow B(\H_\rho^{(n)}), \ \ \ \ \ \ [a_{ij}]\mapsto[\rho(a_{ij})].\]A linear map $\rho$ is \emph{completely contractive} whenever $\rho^{(n)}$ is contractive for each $n\in\bN$. We also refer to a map $\rho$ as being \emph{completely isometric} if $\rho^{(n)}$ is isometric for each $n\in\bN$. A \emph{representation} of an operator algebra is a completely contractive algebra homomorphism $\rho:\A\rightarrow B(\H_\rho)$. When $\A$ is self-adjoint, $\A$ is a $\rC^*$-algebra and a representation is referred to as a \emph{$*$-representation}.

If $\A$ is a non-unital operator algebra, we let $\widetilde{\A}$ denote the unitization. If $\A$ is unital, we will simply refer to $\widetilde{\A}$ as $\A$ itself. We recall the relevant details from \cite{meyer2001adjoining} on unitizations of operator algebras: given a non-unital operator algebra $\A$, there is a completely isometric representation $\A\rightarrow\widetilde{\A}$. Moreover, any representation $\rho:\A\rightarrow B(\H_\rho)$ extends uniquely to a unital representation $\rho^+:\widetilde{\A}\rightarrow B(\H_\rho)$ and $\rho$ is completely isometric precisely when $\rho^+$ is completely isometric.

\subsection{Spectral Topology} \label{SS:topology}

Throughout our analyses, we work with topologies on spaces of representations. These representations will be defined on either $\rC^*$-algebras or non self-adjoint operator algebras. Here, we recount basic facts on the spectrum of a $\rC^*$-algebra (see \cite[Chapter 3]{diximier1977c} for details).

Let $\fA$ be a $\rC^*$-algebra. An ideal $\fJ$ is \emph{primitive} whenever $\fJ$ is the kernel of an irreducible $*$-representation of $\fA$. We define the \emph{primitive ideal space}, denoted Prim$(\fA)$, to be the collection of all primitive ideals. The primitive ideal space is equipped with a natural topology: whenever $\J$ is a collection of primitive ideals, the closure of $\J$ is the collection of all primitive ideals which contain $\bigcap_{\fJ\in\J}\fJ$. The \emph{spectrum} of $\fA$, denoted $\widehat{\fA}$, is the collection of unitary equivalence classes of irreducible $*$-representations of $\fA$. For an irreducible $*$-representation $\pi:\fA\rightarrow B(\H)$, we let $[\pi]$ denote the unitary equivalence class of $\pi$. The topology on $\widehat{\fA}$ is defined to be the weakest topology such that the natural mapping \[\widehat{\fA}\rightarrow\text{Prim}(\fA), \ \ \ \ [\pi]\mapsto\ker\pi,\]is continuous. If $\pi$ and $\sigma$ are $*$-representations of $\fA$, then $\pi$ is \emph{weakly contained} in $\sigma$, denoted $\pi\prec\sigma$, if $\ker\sigma\subset\ker\pi$. Equivalently, $\pi\prec\sigma$ if and only if there is a $*$-homomorphism $\Lambda:\sigma(\fA)\rightarrow\pi(\fA)$ such that $\Lambda\circ\sigma=\pi$. Further, this is equivalent to stipulating that $\lVert\pi(t)\rVert\leq\lVert\sigma(t)\rVert$ for each $t\in\fA$. When $\pi$ and $\sigma$ are weakly contained in one another, we say the representations are \emph{weakly equivalent}. The closure of a singleton $[\pi]\in\widehat{\fA}$ is $\{[\sigma]\in\widehat{\fA} : \sigma\prec\pi\}.$ Let $\D\subset\widehat{\fA}$ and $\D_0$ be a set of irreducible $*$-representations such that $\D = \{ [\sigma]\in\widehat{\fA} : \sigma\in\D_0\}$. Note that if the $*$-representation $\bigoplus_{\sigma\in\D_0}\sigma$ is injective, then the subset $\D$ is dense within $\widehat{\fA}$.

Every irreducible $*$-representation of a closed two-sided ideal $\fJ\subset\fA$ extends uniquely to $\fA$. Conversely, if $\sigma$ is an irreducible $*$-representation of $\fA$, then $\sigma\mid_\fJ$ is irreducible if and only if $\sigma\mid_\fJ\neq 0$ \cite[Lemmata 1.9.14-15]{davidson1996c}. These facts allow for a description of the spectrum which will be central to our arguments:

\begin{theorem}{\cite[Propositions 3.2.1-2]{diximier1977c}}\label{T:DiximierTopology}
Let $\fJ$ be a closed two-sided ideal of a $\rC^*$-algebra $\fA$. Let $\G_\fJ = \{ [\sigma]\in\widehat{\fA}: \sigma\mid_\fJ = 0\}$ and $\U_\fJ = \{[\sigma]\in\widehat{\fA}: \sigma\mid_\fJ\neq0\}$.  Then, the following statements hold:\begin{enumerate}[{\rm (i)}]
\item $\widehat{\fA}$ is the disjoint union of $\U_\fJ$ and $\G_\fJ$;
\item $\U_\fJ$ is an open subset and $\G_\fJ$ is a closed subset in $\widehat{\fA}$;
\item the natural mapping $\G_\fJ\rightarrow\widehat{\fA/\fJ}$ is a homeomorphism;
\item the natural mapping $\U_\fJ\rightarrow\widehat{\fJ}$ is a homeomorphism.
\end{enumerate}Moreover, the mapping $\fJ\mapsto\U_\fJ$ is an inclusion-preserving bijection between the closed two-sided ideals of $\fA$ and the open subsets of $\widehat{\fA}$.
\end{theorem}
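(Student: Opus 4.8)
The plan is to deduce every assertion from the corresponding statement about the primitive ideal space $\Prim(\fA)$, using that, by construction, the topology on $\widehat{\fA}$ is the \emph{initial} topology with respect to the single map $q\colon\widehat{\fA}\to\Prim(\fA)$, $[\pi]\mapsto\ker\pi$; hence the open (resp. closed) subsets of $\widehat{\fA}$ are exactly the sets $q^{-1}(V)$ with $V$ open (resp. closed) in $\Prim(\fA)$. Two standard facts about $\rC^*$-algebras will be used freely: semisimplicity, so that every closed two-sided ideal $\fK$ of $\fA$ equals $\bigcap\{P\in\Prim(\fA):P\supseteq\fK\}$ (write $h(\fK)$ for this hull); and the fact that every primitive ideal is prime.

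Assertion (i) is immediate from the facts recalled just before the statement: for irreducible $\sigma$, the restriction $\sigma\mid_\fJ$ is either $0$ or again irreducible, so $\widehat{\fA}=\G_\fJ\sqcup\U_\fJ$ as sets. For (ii), I would compute the closure of $h(\fJ)$ in $\Prim(\fA)$: it is $\{Q:Q\supseteq\bigcap_{P\in h(\fJ)}P\}=\{Q:Q\supseteq\fJ\}=h(\fJ)$ by semisimplicity, so $h(\fJ)$ is closed; since $\G_\fJ=q^{-1}(h(\fJ))$, the set $\G_\fJ$ is closed and $\U_\fJ$ is open. (The same computation shows, incidentally, that the closed subsets of $\Prim(\fA)$ are precisely the hulls $h(\fK)$ of closed two-sided ideals $\fK$, a point needed below.)

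For (iii) and (iv) I would first record the point-level bijections: $[\sigma]\mapsto[\bar\sigma]$, where $\bar\sigma$ is the representation of $\fA/\fJ$ induced by $\sigma\mid_\fJ=0$, gives a bijection $\G_\fJ\to\widehat{\fA/\fJ}$ by the universal property of the quotient; and $[\sigma]\mapsto[\sigma\mid_\fJ]$ gives a bijection $\U_\fJ\to\widehat{\fJ}$, with inverse the unique extension of an irreducible representation of $\fJ$ to $\fA$, by the extension/restriction facts quoted above. To promote these to homeomorphisms, observe $\ker\bar\sigma=\ker\sigma/\fJ$ and $\ker(\sigma\mid_\fJ)=\ker\sigma\cap\fJ$, so each bijection intertwines $q$ with the maps $h(\fJ)\to\Prim(\fA/\fJ)$, $P\mapsto P/\fJ$, and $\{P\in\Prim(\fA):P\not\supseteq\fJ\}\to\Prim(\fJ)$, $P\mapsto P\cap\fJ$. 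Since both spectra carry the initial topology over their primitive ideal spaces, it suffices to check that these two maps are homeomorphisms for the hull-kernel topologies; then (iii) and (iv) follow formally. The quotient case is routine, as $P\mapsto P/\fJ$ visibly preserves the closure operation. The ideal case is where I expect the real work: one must verify that $P\mapsto P\cap\fJ$ is a well-defined bijection onto $\Prim(\fJ)$ and, for the topology, that for primitive ideals $P,P_\alpha$ not containing $\fJ$ one has $P\cap\fJ\supseteq\bigcap_\alpha(P_\alpha\cap\fJ)$ if and only if $P\supseteq\bigcap_\alpha P_\alpha$; the nontrivial direction uses that $(\bigcap_\alpha P_\alpha)\,\fJ\subseteq P$ together with primeness of $P$ forces $\bigcap_\alpha P_\alpha\subseteq P$, since $\fJ\not\subseteq P$.

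For the final clause: by (ii) and the description of the closed subsets of $\Prim(\fA)$ as exactly the hulls $h(\fK)$, the open subsets of $\widehat{\fA}$ are exactly the sets $q^{-1}(\Prim(\fA)\setminus h(\fK))=\{[\sigma]:\sigma\mid_\fK\neq 0\}=\U_\fK$ as $\fK$ ranges over the closed two-sided ideals, which gives surjectivity of $\fK\mapsto\U_\fK$. This map is inclusion-preserving, since $\fJ_1\subseteq\fJ_2$ forces $\sigma\mid_{\fJ_2}\neq 0$ whenever $\sigma\mid_{\fJ_1}\neq 0$; and it is injective, since $\U_{\fJ_1}=\U_{\fJ_2}$ yields $h(\fJ_1)=h(\fJ_2)$ (every primitive ideal is the kernel of some irreducible $*$-representation), whence $\fJ_1=\fJ_2$ by semisimplicity.
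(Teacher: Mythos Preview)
The paper does not supply its own proof of this statement: it is quoted verbatim as Propositions~3.2.1--2 of Dixmier's book and used as background, so there is nothing in the paper to compare against. Your argument is the standard one found in that reference---reducing everything to the hull-kernel topology on $\Prim(\fA)$ via the initial topology on $\widehat{\fA}$, using semisimplicity for the ideal--hull correspondence and primeness of primitive ideals for the nontrivial direction in (iv)---and it is correct as written.
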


\subsection{$\rC^*$-covers} \label{SS:CstarCovers}

The $\rC^*$-algebra that an operator algebra $\A\subset B(\H)$ generates often carries strikingly different information based off the choice of representation for the algebra. We will make this explicit. Within $B(\H)$, there is a smallest $\rC^*$-algebra containing $\A$, denoted $\rC^*(\A)$. Given a completely isometric representation $\iota:\A\rightarrow B(\K)$, the $\rC^*$-algebra $\rC^*(\iota(\A))$ could be quite unlike $\rC^*(\A)$ (Example \ref{E:DiscAlg}).

Let $\A\subset B(\H)$ be an operator algebra. We call the pair $(\fA, \iota)$ a \emph{$\rC^*$-cover} of $\A$ if $\iota:\A\rightarrow B(\K)$ is a completely isometric representation and $\fA = \rC^*(\iota(\A))$. In our arguments, we will be concerned with a handful of particular $\rC^*$-covers and we will frequently deal with several $\rC^*$-covers at once. For this reason, we will typically not record the Hilbert space that an operator algebra is represented upon.

The most notable $\rC^*$-cover we work with is the maximal $\rC^*$-cover. The \emph{maximal $\rC^*$-cover} of an operator algebra $\A$ is the essentially unique $\rC^*$-cover, denoted $(\rC^*_{max}(\A),\mu)$, satisfying the following universal property: whenever $\rho:\A\rightarrow B(\H_\rho)$ is a representation of $\A$, there is a unique $*$-representation $\theta:\rC^*_{max}(\A)\rightarrow B(\H_\rho)$ such that $\theta\circ\mu= \rho$. In particular, whenever $(\fA, \iota)$ is a $\rC^*$-cover for $\A$, there is a unique surjective $*$-representation $q_\fA:\rC^*_{max}(\A)\rightarrow\fA$ such that $q_\fA\circ\mu = \iota$. We will utilize this notation throughout. The maximal $\rC^*$-cover is known to exist for any operator algebra and can be constructed in a concrete way \cite[Proposition 2.4.2]{blecher2004operator}. That is, one constructs the completely isometric representation $\mu:\A\rightarrow\rC^*_{max}(\A)$ by taking an appropriately large direct sum of representations of $\A$.

We remark that to answer Question \ref{Q:main} it suffices to consider the unital case. Indeed, the maximal $\rC^*$-cover of an operator algebra preserves unitizations in a natural way \cite[2.4.3]{blecher2004operator}. With this in mind, we will occasionally make the assumption that the operator algebra $\A$ is unital.

Another $\rC^*$-cover we make use of is the \emph{$\rC^*$-envelope} of an operator algebra, denoted $(\rC^*_e(\A),\iota_{env})$. The $\rC^*$-envelope satisfies the following universal property: whenever $(\fA, \iota)$ is some $\rC^*$-cover of $\A$, there is a surjective $*$-representation $\pi:\fA\rightarrow \rC^*_e(\A)$ such that $\pi\circ\iota = \iota_{env}$. We refer to the kernel of the representation $q_{\rC^*_e(\A)}$ as the \emph{Silov ideal} of $\A$, denoted Sh$\A$. The Silov ideal is known to be the largest closed two-sided ideal of $\rC^*_{max}(\A)$ such that the quotient map $\rC^*_{max}(\A)\rightarrow \rC^*_{max}(\A)/\text{Sh}\A$ is completely isometric on $\mu(\A)$ \cite{arveson2008noncommutative}. It is not obvious, but this $\rC^*$-cover will always exist. Existence of the $\rC^*$-envelope was originally shown by Hamana \cite{hamana1979injective} through so-called injective envelopes. An alternative, dilation theoretic, proof was recently completed due to several significant contributions \cite{arveson2008noncommutative}, \cite{davidson2015choquet}, \cite{dritschel2005boundary}, \cite{muhly1998algebraic}.

\subsection{Residual finite-dimensionality} \label{SS:RFD}

An operator algebra $\A$ is \emph{residually finite-dimensional} (or RFD) if there are positive integers $r_\lambda, \lambda\in\Lambda,$ and a completely isometric representation $\iota:\A\rightarrow\prod_{\lambda\in\Lambda}\bM_{r_\lambda}$. That is, there is a completely norming family of representations for $\A$ on finite-dimensional Hilbert spaces.

For $\rC^*$-algebras there are many ways in which this property has been characterized. We recall some of the most relevant characterizations for our discussion. Let $\fA$ be a $\rC^*$-algebra and $\pi:\fA\rightarrow B(\H)$ be a $*$-representation. Then, $\pi$ is a \emph{residually finite-dimensional} (or RFD) $*$-representation if there is a net of (possibly degenerate) $*$-representations $\pi_\lambda: \fA\rightarrow B(\H)$ such that $\pi_\lambda(\fA)\H$ is finite-dimensional and \[\sot\lim_\lambda\pi_\lambda(t) = \pi(t), \ \ \ \ \ \ t\in\fA.\]Due to \cite[3.5.2]{diximier1977c}, this is the same as the topology of point-weak convergence. The following may be found in \cite{archbold1995residually}.

\begin{theorem}\label{T:RFDdensity}
Let $\fA$ be a $\rC^*$-algebra. Then, the following statements are equivalent: \begin{enumerate}[{\rm (i)}]
\item $\fA$ is an RFD $\rC^*$-algebra;
\item the collection of unitary equivalence classes of irreducible finite-dimensional $*$-representations in $\widehat{\fA}$ is dense;
\item every $*$-representation of $\fA$ is residually finite-dimensional.
\end{enumerate}
\end{theorem}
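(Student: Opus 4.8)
The plan is to prove the cycle $(\mathrm{iii})\Rightarrow(\mathrm{i})\Leftrightarrow(\mathrm{ii})\Rightarrow(\mathrm{iii})$, with essentially all of the work concentrated in the last implication. For $(\mathrm{i})\Leftrightarrow(\mathrm{ii})$ I would first record that, for $\rC^*$-algebras, a completely isometric $*$-homomorphism is the same thing as a faithful one, so (i) says precisely that $\fA$ admits a faithful family $\{\iota_\lambda:\fA\to\bM_{r_\lambda}\}$ of finite-dimensional $*$-representations. Decomposing each $\iota_\lambda$ into its irreducible subrepresentations and collecting these into a set $\D_0$ of finite-dimensional irreducible $*$-representations, I get $\bigcap_{\sigma\in\D_0}\ker\sigma\subseteq\bigcap_\lambda\ker\iota_\lambda=\{0\}$, so $\bigoplus_{\sigma\in\D_0}\sigma$ is injective and hence $\D=\{[\sigma]:\sigma\in\D_0\}$ is dense in $\widehat\fA$ by the density criterion recorded just before Theorem \ref{T:DiximierTopology}; this gives (ii). Conversely, if the finite-dimensional points of $\widehat\fA$ are dense, I would let $\fJ$ be the intersection of the kernels of all finite-dimensional irreducible $*$-representations: every finite-dimensional $[\sigma]$ then lies in the closed set $\G_\fJ$ of Theorem \ref{T:DiximierTopology}, so density forces $\G_\fJ=\widehat\fA$, whence $\U_\fJ=\varnothing=\U_{\{0\}}$ and $\fJ=\{0\}$ by the ideal--open-set correspondence of Theorem \ref{T:DiximierTopology}; the direct sum of all finite-dimensional irreducible $*$-representations is then a faithful, hence completely isometric, $*$-homomorphism of $\fA$ into a product of matrix algebras, which is (i).

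For $(\mathrm{iii})\Rightarrow(\mathrm{i})$ I would apply (iii) to a faithful $*$-representation $\pi$ of $\fA$, say the universal one: there is a net of $*$-representations $\pi_\lambda$ on $\H_\pi$ with $\pi_\lambda(\fA)\H_\pi$ finite-dimensional and $\pi_\lambda(t)\xi\to\pi(t)\xi$ for all $t\in\fA$, $\xi\in\H_\pi$. Then $\|\pi_\lambda(t)\xi\|^2=\langle\pi_\lambda(t^*t)\xi,\xi\rangle\to\langle\pi(t^*t)\xi,\xi\rangle=\|\pi(t)\xi\|^2$, so $\|t\|=\|\pi(t)\|=\sup_\lambda\|\pi_\lambda(t)\|$; running the same estimate over matrices of elements of $\fA$ shows that $\bigoplus_\lambda\pi_\lambda$, viewed as a $*$-homomorphism into $\prod_\lambda\bM_{k_\lambda}$ with $k_\lambda=\dim\overline{\pi_\lambda(\fA)\H_\pi}$, is completely isometric, so $\fA$ is RFD.

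The substance is $(\mathrm{i})\,\&\,(\mathrm{ii})\Rightarrow(\mathrm{iii})$. I would take $\sigma$ to be the direct sum of all finite-dimensional irreducible $*$-representations of $\fA$, each with countably infinite multiplicity; by the first paragraph $\sigma$ is faithful, so any $*$-representation $\pi$ satisfies $\ker\sigma=\{0\}\subseteq\ker\pi$, i.e. $\pi\prec\sigma$. By \cite[3.5.2]{diximier1977c} the point-weak and point-strong topologies coincide on the set of $*$-representations, so it is enough to produce finite-dimensional $*$-representations converging to $\pi$ point-weakly. Writing $\pi$ as a direct sum of cyclic subrepresentations, I would observe that a given finite batch of point-weak data involves only finitely many summands, and that a direct sum of a finite-dimensional representation with a zero representation is again finite-dimensional; this reduces matters to cyclic $\pi$ with cyclic unit vector $\xi$ and associated state $\phi=\langle\pi(\cdot)\xi,\xi\rangle$. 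Using density of the finite-dimensional points of $\widehat\fA$ together with openness of the map $P(\fA)\to\widehat\fA$ and the description $S(\fA)\subseteq\overline{\conv}^{\,\mathrm{w}*}\bigl(P(\fA)\cup\{0\}\bigr)$ of the state space (see \cite[\S3.4]{diximier1977c}), I would write $\phi$ as a weak$^*$-limit of positive functionals $\phi_\lambda$ whose GNS representations $\tau_\lambda$, with cyclic vectors $\eta_\lambda$, are finite-dimensional. The identities $\langle\tau_\lambda(t)\tau_\lambda(a)\eta_\lambda,\tau_\lambda(b)\eta_\lambda\rangle=\phi_\lambda(b^*ta)\to\phi(b^*ta)=\langle\pi(t)\pi(a)\xi,\pi(b)\xi\rangle$ then say that, for each finite $S\subseteq\fA$, the Gram data of the tuple $(\tau_\lambda(s)\eta_\lambda)_{s\in S}$ converge to those of $(\pi(s)\xi)_{s\in S}$; a Gram-matrix perturbation argument would give, for large $\lambda$, an isometry $U_\lambda:\H_{\tau_\lambda}\to\H_\pi$ carrying each $\tau_\lambda(s)\eta_\lambda$ within $\varepsilon$ of $\pi(s)\xi$, and $U_\lambda\tau_\lambda(\cdot)U_\lambda^*$ is then a finite-dimensional $*$-representation on $\H_\pi$ that agrees with $\pi$ to within $\varepsilon$ on the prescribed vectors and algebra elements.

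The hard part will be that last step: reconciling the representations $\tau_\lambda$, which a priori live on unrelated Hilbert spaces, with $\pi$. This is exactly where the Fell topology on representation spaces enters, and the concrete tool is the Gram-matrix perturbation lemma (finite tuples of vectors with nearly equal Gram matrices are carried close to one another by a single unitary), used with the elementary fact that an isometric compression of a $*$-representation is again a (possibly degenerate) $*$-representation. I would also need minor care when the vectors $\{\pi(s)\xi\}$ are linearly dependent (pass to a maximal independent subtuple) and when $\H_\pi$ is finite-dimensional (then $\pi$ is already finite-dimensional and there is nothing to do). The remaining points --- truncating the infinite direct sums defining $\sigma$ and $\pi$, and passing from density of the finite-dimensional irreducible classes to weak$^*$-density of positive functionals with finite-dimensional GNS --- should be routine given Theorem \ref{T:DiximierTopology} and standard facts about state spaces.
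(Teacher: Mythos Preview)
The paper does not prove Theorem \ref{T:RFDdensity}; it is quoted as a known result and attributed to Archbold \cite{archbold1995residually} (with the implication $(\mathrm{i})\Leftrightarrow(\mathrm{iii})$ also appearing in Exel--Loring \cite[Theorem 2.4]{exel1992finite}, which the paper cites repeatedly). So there is no in-paper argument to compare against.

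Your outline is essentially the Exel--Loring proof, and it is sound. Two minor comments. First, your justification for approximating $\phi$ by positive functionals with finite-dimensional GNS via openness of $P(\fA)\to\widehat\fA$ is more circuitous than necessary: once you know $\sigma$ is faithful you have $\pi\prec\sigma$, and Fell's characterisation of weak containment (\cite[3.4.4]{diximier1977c}) gives directly that every vector functional of $\pi$ is a weak$^*$-limit of finite sums of vector functionals of $\sigma$, each of which has finite-dimensional GNS. Second, in the reduction to cyclic $\pi$, a vector in $\bigoplus_i\H_i$ need not sit in finitely many summands, so you should first truncate the given test vectors to a finite partial sum before invoking the cyclic case; this is routine but worth saying. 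The Gram-matrix perturbation step and the observation that $U_\lambda\tau_\lambda(\cdot)U_\lambda^*$ is a degenerate $*$-representation (since $U_\lambda^*U_\lambda=I$) are exactly the mechanism Exel--Loring use, and you have identified them correctly as the crux.
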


We analyze residual finite-dimensionality of operator algebras in the language of statements {\rm (ii)} and {\rm (iii)}. These statements are the impetus for Sections \ref{S:RFDmax} and \ref{S:RFDrepns}.

Residual finite-dimensionality for operator algebras was first studied in \cite{clouatre2019residually}. Therein, a complementary analysis discussing residual finite-dimensionality of $\rC^*$-covers was provided. Our motivation deals with the residual finite-dimensionality of the maximal $\rC^*$-cover. However, we remark that there is no known description of the RFD $\rC^*$-covers of an operator algebra. The following Example \ref{E:DiscAlg} is of great importance. Here, we illustrate that there may be several RFD $\rC^*$-covers of an operator algebra but there may still be intermediate $\rC^*$-covers which are not RFD.

\begin{example}\label{E:DiscAlg}
Let $\rA(\bD)$ denote the algebra of complex functions which are holomorphic on the open unit disk $\bD$ and continuous on the boundary $\bT$. The algebra $\rA(\bD)$ is called the disc algebra. There are several well-studied $\rC^*$-covers for the disc algebra. Indeed, there are obvious completely isometric representations $\iota: \rA(\bD)\rightarrow \rC(\bT)$ and $j: \rA(\bD)\rightarrow \rC(\ol{\bD})$ which produce RFD $\rC^*$-covers of $\rA(\bD)$.

The matrical von-Neumann inequality \cite[Corollary 3.12]{paulsen2002completely} implies that the maximal $\rC^*$-cover of $\rA(\bD)$ is the universal $\rC^*$-algebra generated by a contraction (alternatively, see \cite[Example 2.3]{blecher1999modules}). By \cite[Proposition 2.2]{courtney2018universal}, it follows that the maximal $\rC^*$-cover of $\rA(\bD)$ is RFD.

Despite the fact that $\rC^*_{max}(\rA(\bD))$ is RFD, there are other $\rC^*$-covers of $\rA(\bD)$ which fail to be RFD. To show this, we utilize standard facts on the Toeplitz algebra which may be found within \cite{agler2002pick}, \cite{arveson1998subalgebras}, \cite{douglas2012banach}. Let $H^2$ denote the classical Hardy space on the disc. Each $f\in \rA(\bD)$ determines a multiplication operator $M_f: H^2\rightarrow H^2$ such that $\lVert M_f\rVert = \lVert f\rVert.$ Further, the map \[\omega: \rA(\bD)\rightarrow B(H^2), \ \ \ \ \ \ f\mapsto M_f\]is a completely isometric representation. The $\rC^*$-algebra $\fT = \rC^*(\omega(\rA(\bD)))$ is the Toeplitz algebra and there is a short exact sequence \[0\rightarrow\fK(H^2)\rightarrow\fT\rightarrow \rC(\bT)\rightarrow 0.\]Note that residual finite-dimensionality passes to subalgebras and $\fT$ contains the ideal of compact operators $\fK(H^2)$. The $\rC^*$-algebra $\fK(H^2)$ is not RFD by Theorem \ref{T:RFDdensity} {\rm (ii)}. Hence, $\fT$ is not RFD either.
\end{example}

A very recent investigation showed that the maximal $\rC^*$-cover for the polydisc algebra $\rA(\bD^n)$ is also RFD \cite[Corollary 5.13]{clouatre2021finite}. Also, while we saw in Example \ref{E:DiscAlg} that residual finite-dimensionality does not pass to quotient algebras of $\rC^*_{max}(\A)$, residual finite-dimensionality clearly passes to ideals of $\rC^*_{max}(\A)$. Topologically, this statement may be interpreted as saying that residual finite-dimensionality passes to open subsets of the spectrum but does not necessarily pass to closed subsets.

\section{Topology and Ordering for $\rC^*$-covers}\label{S:TopOrder}

Within this section, we equate order theoretic statements on the collection of $\rC^*$-covers of an operator algebra with topological data from the spectrum of the maximal $\rC^*$-cover.

Fix an operator algebra $\A$. One can define a partial ordering on the $\rC^*$-covers of $\A$ as in \cite[Proposition 2.1.1]{hamidi2019admissibility}. If $(\fA, \iota)$ and $(\fB, j)$ are $\rC^*$-covers for $\A$, then we say $(\fA, \iota)\preceq(\fB, j)$ if and only if there is a surjective $*$-representation $\pi:\fB\rightarrow\fA$ such that $\pi\circ j = \iota$. Whenever we have $(\fA, \iota)\preceq(\fB, j)$ and $(\fB, j)\preceq(\fA,\iota)$, we say that the $\rC^*$-covers are \emph{equivalent}, denoted $(\fA, \iota)\sim(\fB, j)$. Two $\rC^*$-covers are equivalent if and only if there is a $*$-isomorphism $\pi:\fB\rightarrow\fA$ such that $\pi\circ j = \iota$. Up to equivalence of $\rC^*$-covers, the maximal $\rC^*$-cover is the unique maximal element under this ordering whereas the $\rC^*$-envelope is the unique minimal element \cite[Examples 2.1.8 and 2.1.13]{hamidi2019admissibility}. The reader should consult the thesis of Hamidi \cite[Chapters 1-2]{hamidi2019admissibility} for details. The fundamental approach we take in this section complements Hamidi's work. Here, we equate the partial ordering with a lattice structure determined by the spectrum of the maximal $\rC^*$-cover. These interpretations will be applied to residual finite-dimensionality in Section \ref{S:RFDmax}.

We start by identifying properties of the spectrum of the maximal $\rC^*$-cover in relation to the $\rC^*$-covers of $\A$. First, the spectrum of any $\rC^*$-cover of $\A$ may be identified as a closed subset of $\widehat{\rC^*_{max}(\A)}$: Let $\S(\fA, \iota)\subset\widehat{\rC^*_{max}(\A)}$ denote the collection of unitary equivalence classes consisting of irreducible $*$-representations $\pi:\rC^*_{max}(\A)\rightarrow B(\H_\pi)$ such that $\pi = \sigma\circ q_\fA$ where $\sigma$ is an irreducible $*$-representation of $\fA$. Theorem \ref{T:DiximierTopology} reveals that $\S(\fA, \iota)$ is naturally homeomorphic to $\widehat{\fA}$. In fact, every closed subset which contains the spectrum of the $\rC^*$-envelope is of this form.

\begin{theorem}\label{T:closcov}
Let $\A$ be an operator algebra and $\C\subset\widehat{\rC^*_{max}(\A)}$ be some subset. Then, $\C = \S(\fA, \iota)$ for some $\rC^*$-cover $(\fA, \iota)$ of $\A$ if and only if $\C$ is closed and contains $\S(\rC^*_e(\A), \iota_{env})$.
\end{theorem}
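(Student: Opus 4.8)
The plan is to exploit Theorem \ref{T:DiximierTopology} together with the correspondence between $\rC^*$-covers of $\A$ and closed two-sided ideals of $\rC^*_{max}(\A)$, so that the statement becomes a translation of the ideal-theoretic picture into the topology of $\widehat{\rC^*_{max}(\A)}$. The forward direction is the easier one. Given a $\rC^*$-cover $(\fA,\iota)$, let $\fJ = \ker q_\fA \subset \rC^*_{max}(\A)$. Then $\fA \cong \rC^*_{max}(\A)/\fJ$ as a $\rC^*$-cover, and under this identification $q_\fA$ is the quotient map. I would observe that $\S(\fA,\iota)$ is exactly the set $\G_\fJ = \{[\sigma] : \sigma|_\fJ = 0\}$: an irreducible representation of $\rC^*_{max}(\A)$ that kills $\fJ$ factors through $\rC^*_{max}(\A)/\fJ = \fA$ and conversely, so composition with $q_\fA$ gives a bijection $\widehat{\fA} \to \G_\fJ$, and one checks this is the same bijection as the natural homeomorphism $\G_\fJ \to \widehat{\rC^*_{max}(\A)/\fJ}$ of Theorem \ref{T:DiximierTopology}(iii). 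Hence $\S(\fA,\iota) = \G_\fJ$ is closed by Theorem \ref{T:DiximierTopology}(ii). To see it contains $\S(\rC^*_e(\A),\iota_{env})$, note that the Silov ideal $\mathrm{Sh}\A = \ker q_{\rC^*_e(\A)}$ is the \emph{largest} ideal on which the quotient is completely isometric on $\mu(\A)$; since $q_\fA$ is completely isometric on $\mu(\A)$ (because $\iota = q_\fA \circ \mu$ is completely isometric), we get $\fJ \subseteq \mathrm{Sh}\A$, and therefore $\G_{\mathrm{Sh}\A} \subseteq \G_\fJ$, i.e.\ $\S(\rC^*_e(\A),\iota_{env}) \subseteq \S(\fA,\iota)$.

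For the converse, suppose $\C \subseteq \widehat{\rC^*_{max}(\A)}$ is closed and contains $\S(\rC^*_e(\A),\iota_{env}) = \G_{\mathrm{Sh}\A}$. Its complement $\U = \widehat{\rC^*_{max}(\A)} \setminus \C$ is open, so by the last assertion of Theorem \ref{T:DiximierTopology} there is a (unique) closed two-sided ideal $\fJ$ of $\rC^*_{max}(\A)$ with $\U = \U_\fJ$, hence $\C = \G_\fJ$. Now I would set $\fA = \rC^*_{max}(\A)/\fJ$ with quotient map $q$, and define $\iota = q \circ \mu : \A \to \fA$. The point is that $\iota$ is completely isometric: since $\C \supseteq \G_{\mathrm{Sh}\A}$ we have $\U_\fJ \subseteq \U_{\mathrm{Sh}\A}$, and the inclusion-preserving bijection $\fJ \mapsto \U_\fJ$ then forces $\fJ \subseteq \mathrm{Sh}\A$; since the quotient by $\mathrm{Sh}\A$ is completely isometric on $\mu(\A)$ and $\rC^*_{max}(\A)/\mathrm{Sh}\A$ is a quotient of $\fA = \rC^*_{max}(\A)/\fJ$, the map $\mu(\A) \to \fA$, and hence $\iota$, is completely isometric as well. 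Finally $\fA = \rC^*(\iota(\A))$ because $q$ is surjective and $\mu(\A)$ generates $\rC^*_{max}(\A)$, so $(\fA,\iota)$ is a genuine $\rC^*$-cover, and by the forward computation $\S(\fA,\iota) = \G_\fJ = \C$.

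The step I expect to require the most care is the verification that $\iota = q \circ \mu$ is completely isometric in the converse direction, i.e.\ pinning down precisely that the constraint $\C \supseteq \S(\rC^*_e(\A),\iota_{env})$ translates (via the ideal--open set bijection) into $\ker q \subseteq \mathrm{Sh}\A$ and then invoking the maximality characterization of the Silov ideal recalled in Subsection \ref{SS:CstarCovers}. Everything else is a matter of unwinding the definition of $\S(\fA,\iota)$ and citing Theorem \ref{T:DiximierTopology}; one should also double-check that the identification $\S(\fA,\iota) = \G_{\ker q_\fA}$ is well defined independently of the chosen concrete model of $(\fA,\iota)$, which follows from the uniqueness of $q_\fA$ built into the universal property of $\rC^*_{max}(\A)$.
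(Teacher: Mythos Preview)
Your proposal is correct and follows essentially the same approach as the paper: both directions rely on identifying $\S(\fA,\iota)$ with $\G_{\ker q_\fA}$, invoking Theorem~\ref{T:DiximierTopology} for the closed/open correspondence with ideals, and using the maximality of the Silov ideal to pass between the containment $\S(\rC^*_e(\A),\iota_{env})\subset\C$ and the inclusion $\fJ\subset\Sh\A$. The only cosmetic difference is that where the paper cites \cite[Proposition 1.3.2]{hamidi2019admissibility} for the complete isometry of $\iota=q\circ\mu$, you supply the direct argument (factoring through $\rC^*_{max}(\A)/\Sh\A$), which is precisely what that citation encodes.
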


\begin{proof}
$(\Rightarrow)$ By Theorem \ref{T:DiximierTopology} {\rm (ii)}, the subset $\S(\fA, \iota)$ is closed. Since $q_\fA:\rC^*_{max}(\A)\rightarrow\fA$ is completely isometric on $\mu(\A)$, we have that $\ker q_\fA\subset \text{Sh}\A$. Then, Theorem \ref{T:DiximierTopology} implies that $\S(\rC^*_e(\A), \iota_{env})\subset\S(\fA, \iota)$.

$(\Leftarrow)$ As $\C$ is closed, by Theorem \ref{T:DiximierTopology}, $\C$ is the collection of all unitary equivalence classes consisting of irreducible $*$-representations of $\rC^*_{max}(\A)$ which vanish on some ideal $\fJ$. Similarly, the subset $\S(\rC^*_e(\A), \iota_{env})$ consists of all equivalence classes of irreducible $*$-representations of $\rC^*_{max}(\A)$ which vanish on the Silov ideal $\text{Sh}\A$. As $\S(\rC^*_e(\A), \iota_{env})\subset\C$, we have that $\fJ\subset \text{Sh}\A$ by Theorem \ref{T:DiximierTopology}. Let $\fA = \rC^*_{max}(\A)/\fJ$ and $q:\rC^*_{max}(\A)\rightarrow \fA$ be the corresponding quotient map. Define a representation of $\A$ by $\iota = q\circ\mu$. The map $\iota$ is completely isometric by \cite[Proposition 1.3.2]{hamidi2019admissibility}. Furthermore, \[\rC^*(\iota(\A)) = \rC^*(q\circ\mu(\A)) = q(\rC^*(\mu(\A))) = \fA.\]So $(\fA, \iota)$ is a $\rC^*$-cover of $\A$ and $q_\fA = q$ by uniqueness of the representation $q_\fA$. Therefore $\C$ is the collection of all unitary equivalence classes consisting of irreducible $*$-representations which vanish on $\fJ = \ker q_\fA$ and $\C = \S(\fA, \iota)$.
\end{proof}

It will be a consequence of the subsequent Proposition \ref{P:toporder} that the $\rC^*$-cover obtained in Theorem \ref{T:closcov} is unique up to equivalence. Also, despite the conclusion of Theorem \ref{T:closcov}, the relative topology of $\S(\rC^*_e(\A), \iota_{env})$ does not impose an obvious restriction regarding residual finite-dimensionality of $\rC^*$-covers. In fact, there are operator algebras whose $\rC^*$-envelope does not possess any finite-dimensional $*$-representations while there exist other $\rC^*$-covers which are RFD.

\begin{example}\label{E:cuntz}
Take a pair of isometries $V,W\in B(\H)$ satisfying $VV^*+WW^*=I$ and let $\M$ be the unital subspace of $B(\H)$ generated by $V$ and $W$. Form the unital operator algebra $\A$ consisting of elements of the form \[\begin{bmatrix} \lambda I & T\\ 0 & \mu I\end{bmatrix}\in B(\H^{(2)})\]where $\lambda,\mu\in\bC$ and $T\in\M$. We have that $\rC^*_e(\A)\cong\bM_2(\O_2)$ where $\O_2$ denotes the Cuntz algebra \cite[Example 5]{clouatre2020finite}. So $\rC^*_e(\A)$ is a simple, infinite-dimensional $\rC^*$-algebra. Consequently, the subset $\S(\rC^*_e(\A), \iota_{env})$ does not possess any equivalence classes of finite-dimensional irreducible $*$-representations. On the other hand, $\rC^*_{max}(\A)$ is RFD by \cite[Theorem 5.1]{clouatre2019residually}.
\end{example}

As there can easily be many inequivalent $\rC^*$-covers of a fixed operator algebra, Theorem \ref{T:closcov} indicates that the topology of $\widehat{\rC^*_{max}(\A)}$ is non-trivial. This is the case for even the simplest choice of non self-adjoint algebra.

\begin{example}\label{E:UTtopology}
Let $\T_2$ denote the algebra of upper-triangular $2\times 2$ matrices. We show that the spectrum of $\rC^*_{max}(\T_2)$ is not even Hausdorff. Let \[\fM = \{ f\in C([0,1], \bM_2): f(0) \text{ is a diagonal matrix}\}\]and let $\mu:\T_2\rightarrow\fM$ be defined by \[\begin{bmatrix} a & b\\ 0 & c\end{bmatrix}\mapsto \begin{bmatrix} aI & b\sqrt{\cdot}\\ 0 & cI\end{bmatrix}.\]In \cite[Example 2.4]{blecher1999modules}, it was shown that $\mu$ is completely isometric and that $(\fM, \mu)$ is equivalent to the maximal $\rC^*$-cover of $\T_2$.

Let $\iota:\T_2\rightarrow\bM_2$ be the identity representation. As $\bM_2$ is simple, $(\bM_2, \iota)$ is the $\rC^*$-envelope for $\T_2$. Let $\gamma_\xi:\fM\rightarrow\bM_2$ denote evaluation at $\xi\in(0,1]$.  Observe that Theorem \ref{T:closcov} implies that any closed subset of $\widehat{\fM}$ containing $[\gamma_1]$ determines the spectrum of a $\rC^*$-cover for $\T_2$. Indeed $\gamma_1\circ\mu = \iota$ and then it is easy to see that $\S(\bM_2, \iota) = \{[\gamma_1]\}$ as there is a unique irreducible $*$-representation of $\bM_2$.

Note that $\rC([0,1],\bM_2)\cong \rC([0,1])\otimes\bM_2$ is a liminal (or CCR) $\rC^*$-algebra \cite[Theorem 2 (c)]{tomiyama1967applications}. As a result, $\fM$ is also a liminal $\rC^*$-algebra \cite[Proposition 4.2.4]{diximier1977c}. As $\fM$ is unital, it follows that all irreducible $*$-representations of $\fM$ are finite-dimensional. Whence, $\widehat{\fM}$ is homeomorphic to $\text{Prim}(\fM)$ via the natural mapping \cite[Proposition 3.1.6]{diximier1977c}.

For $f\in\fM$ and $j=1,2$, define $\eta_j(f)$ to be the $(j,j)$-th entry of $f(0)$. Then $\eta_j:\fM\rightarrow\bC$ is a character. Take a sequence of points $(\xi_n)_{n\geq 1}\subset(0,1]$ converging to $0$. Let $\fJ_n=\ker\gamma_{\xi_n}$ and for $j=1,2$, let $\fL_j=\ker\eta_j$. Then, we see that $\bigcap_{n=1}^\infty \fJ_n\subset \fL_1, \fL_2$. Therefore, $\fL_1$ and $\fL_2$ are distinct accumulation points for the sequence $(\fJ_n)_{n\geq 1}$. In particular, $\widehat{\fM}$ is not Hausdorff.
\end{example}

The reader should compare the conclusion of Example \ref{E:UTtopology} with the following classical result of Kaplansky \cite[Theorem 4.2]{kaplansky1951structure}. Therein, it was shown that if all irreducible $*$-representations of a $\rC^*$-algebra are finite-dimensional and of the same dimension, then the spectrum is Hausdorff.

Now we showcase how the partial ordering on the $\rC^*$-covers of an operator algebra can be completely understood by topological statements.

\begin{proposition}\label{P:toporder}
Let $\A$ be an operator algebra and $(\fA, \iota), (\fB, j)$ be $\rC^*$-covers of $\A$. Then, the following statements are equivalent: \begin{enumerate}[{\rm (i)}]
\item $\S(\fA, \iota)$ is a subset of $\S(\fB, j)$;
\item $q_\fA$ is weakly contained in $q_\fB$;
\item $(\fA, \iota)\preceq(\fB, j).$
\end{enumerate}In particular, $\S(\fA, \iota)=\S(\fB, j)$ if and only if $(\fA, \iota)\sim(\fB, j)$.
\end{proposition}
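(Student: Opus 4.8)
The plan is to prove the cyclic chain of implications (iii) $\Rightarrow$ (ii) $\Rightarrow$ (i) $\Rightarrow$ (iii), after which the final ``in particular'' claim follows immediately by applying the equivalence in both directions together with the definition of $\sim$. For (iii) $\Rightarrow$ (ii): if $(\fA,\iota)\preceq(\fB,j)$ then there is a surjective $*$-representation $\pi\colon\fB\rightarrow\fA$ with $\pi\circ j=\iota$; composing with the canonical quotients, $\pi\circ q_\fB$ is a $*$-representation of $\rC^*_{max}(\A)$ onto $\fA$ satisfying $(\pi\circ q_\fB)\circ\mu=\pi\circ j=\iota=q_\fA\circ\mu$, so by the uniqueness clause in the universal property of $\rC^*_{max}(\A)$ we get $\pi\circ q_\fB=q_\fA$. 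Hence $\ker q_\fB\subset\ker q_\fA$, which is exactly $q_\fA\prec q_\fB$ in the notation of Subsection~\ref{SS:topology}.

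For (ii) $\Rightarrow$ (i): assume $\ker q_\fB\subset\ker q_\fA$. A class $[\pi]\in\S(\fA,\iota)$ is, by definition, represented by $\pi=\sigma\circ q_\fA$ for some irreducible $*$-representation $\sigma$ of $\fA$; in particular $\pi$ is an irreducible $*$-representation of $\rC^*_{max}(\A)$ that annihilates $\ker q_\fA$, hence also annihilates $\ker q_\fB$. But then $\pi$ factors through $\rC^*_{max}(\A)/\ker q_\fB\cong\fB$; that is, $\pi=\sigma'\circ q_\fB$ where $\sigma'$ is the induced $*$-representation of $\fB$, and $\sigma'$ is irreducible since $\pi$ is. Thus $[\pi]\in\S(\fB,j)$, giving $\S(\fA,\iota)\subset\S(\fB,j)$. (Equivalently, one can phrase this via Theorem~\ref{T:DiximierTopology}: $\S(\fA,\iota)=\G_{\ker q_\fA}$ and $\S(\fB,j)=\G_{\ker q_\fB}$, and the inclusion of closed sets corresponds to the reverse inclusion of ideals.)

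For (i) $\Rightarrow$ (iii): assume $\S(\fA,\iota)\subset\S(\fB,j)$. These are closed subsets of $\widehat{\rC^*_{max}(\A)}$, and by Theorem~\ref{T:DiximierTopology} (the final bijection together with part~(ii)) they correspond to ideals $\ker q_\fA$ and $\ker q_\fB$ with $\ker q_\fB\subset\ker q_\fA$: indeed $\G_\fJ=\widehat{\rC^*_{max}(\A)}\setminus\U_\fJ$, the map $\fJ\mapsto\U_\fJ$ is an inclusion-preserving bijection onto open sets, so $\G_{\ker q_\fA}\subset\G_{\ker q_\fB}$ forces $\U_{\ker q_\fB}\subset\U_{\ker q_\fA}$ and hence $\ker q_\fB\subset\ker q_\fA$. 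Then the quotient map $\rC^*_{max}(\A)/\ker q_\fB\rightarrow\rC^*_{max}(\A)/\ker q_\fA$ induces a surjective $*$-representation $\pi\colon\fB\rightarrow\fA$ with $\pi\circ q_\fB=q_\fA$, whence $\pi\circ j=\pi\circ q_\fB\circ\mu=q_\fA\circ\mu=\iota$, so $(\fA,\iota)\preceq(\fB,j)$.

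Finally, for the ``in particular'': $\S(\fA,\iota)=\S(\fB,j)$ iff both inclusions hold iff (by the equivalence) $(\fA,\iota)\preceq(\fB,j)$ and $(\fB,j)\preceq(\fA,\iota)$, which is the definition of $(\fA,\iota)\sim(\fB,j)$. I expect the only mildly delicate point to be bookkeeping the uniqueness in the universal property in the step (iii) $\Rightarrow$ (ii) — one must be careful that $\pi\circ q_\fB$ really is forced to equal $q_\fA$ rather than merely agreeing with it on $\mu(\A)$ — and keeping straight which direction of ideal inclusion corresponds to which direction of spectrum inclusion when invoking Theorem~\ref{T:DiximierTopology}; everything else is routine diagram-chasing with quotients of $\rC^*$-algebras.
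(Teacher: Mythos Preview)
Your proof is correct and uses essentially the same ingredients as the paper's proof: the uniqueness in the universal property of $\rC^*_{max}(\A)$ to establish $\pi\circ q_\fB=q_\fA$, the ideal--spectrum correspondence of Theorem~\ref{T:DiximierTopology}, and factoring irreducible representations through quotients. The only difference is that the paper runs the cycle in the opposite direction, proving ${\rm(i)}\Rightarrow{\rm(ii)}\Rightarrow{\rm(iii)}\Rightarrow{\rm(i)}$, but the individual arguments match yours step for step.
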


\begin{proof}
{\rm(i)}$\Rightarrow${\rm(ii)}: As $\S(\fB, j)^C\subset\S(\fA, \iota)^C$, we have $\ker q_\fB\subset \ker q_\fA$ by Theorem \ref{T:DiximierTopology}.

{\rm (ii)}$\Rightarrow${\rm(iii)}: Since $q_\fA$ is weakly contained in $q_\fB$, there is a surjective $*$-representation $q:\fB\rightarrow\fA$ such that $q\circ q_\fB = q_\fA.$ We see that \[q\circ j = q\circ q_\fB\circ\mu= q_\fA\circ\mu = \iota.\]Therefore $(\fA,\iota)\preceq(\fB, j)$.

{\rm (iii)}$\Rightarrow${\rm (i)}: Let $q:\fB\rightarrow \fA$ be a surjective $*$-representation such that $q\circ j =\iota$. As $q\circ q_\fB\circ \mu = \iota,$ we have that $q\circ q_\fB = q_\fA$. Take $[\pi]\in\S(\fA, \iota)$ and let $\pi$ be a representative for $[\pi]$. Express $\pi = \sigma\circ q_\fA$ where $\sigma$ is an irreducible $*$-representation of $\fA$. Then $\pi = (\sigma\circ q)\circ q_\fB$ where $\sigma\circ q$ is an irreducible $*$-representation of $\fB$. So $[\pi]\in\S(\fB, j)$.
\end{proof}

Next, we provide a refinement of Proposition \ref{P:toporder}. First we recall the complete lattice structure of $\rC^*$-covers in \cite[Section 2.1]{hamidi2019admissibility}. Fix an operator algebra $\A$. Let $\text{Cov}(\A)$ denote the collection of all equivalence classes of $\rC^*$-covers of $\A$. Define a partial ordering $\preceq_q$ on $\text{Cov}(\A)$ by $[(\fA, \iota)]\preceq_q[(\fB, j)]$ if and only if $(\fA, \iota)\preceq(\fB, j)$. For brevity, we will refer solely to the ordering $\preceq$. A complete lattice structure on $\text{Cov}(\A)$ is defined as follows. Let $\C = \{[(\fA_\lambda, \iota_\lambda)]\}\subset\text{Cov}(\A)$ be some collection. Then, $\sup\C$ is the equivalence class $[(\rC^*(\iota(\A)), \iota)]$ where $\iota = \bigoplus_\lambda\iota_\lambda.$ For each $\lambda$, let $\fJ_\lambda = \ker q_{\fA_\lambda}$ and let $\fJ = \ol{\sum_\lambda\fJ_\lambda}$ be the norm closure of the ideal generated by $\bigcup_\lambda \fJ_\lambda$. Then, $\inf\C$ is the equivalence class $[(\rC^*_{max}(\A)/\fJ,q\circ\mu)]$ where $q:\rC^*_{max}(\A)\rightarrow\rC^*_{max}(\A)/\fJ$ is the quotient map.

We show that $\text{Cov}(\A)$ is isomorphic to a complete lattice arising naturally from $\widehat{\rC^*_{max}(\A)}$. To this end, let $\fQ(\A)$ denote the collection of all closed subsets of $\widehat{\rC^*_{max}(\A)}$ which contain $\S(\rC^*_e(\A), \iota_{env})$, ordered by inclusion. Then $\fQ(\A)$ is a complete lattice where $\inf\D = \bigcap_\lambda F_\lambda$ and $\sup\D = \ol{\bigcup_\lambda F_\lambda}$.

\begin{theorem}\label{T:LatticeIsom}
Let $\A$ be an operator algebra. Then the mapping\[\Omega: \text{Cov}(\A)\rightarrow\fQ(\A), \ \ \ \ \ \ [(\fA, \iota)]\mapsto\S(\fA, \iota),\]is an isomorphism of complete lattices.
\end{theorem}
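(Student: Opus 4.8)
The plan is to show that $\Omega$ is a well-defined, order-preserving bijection whose inverse is also order-preserving, and then to verify that it intertwines the lattice operations on both sides; since an order isomorphism between complete lattices automatically preserves arbitrary suprema and infima, the lattice-operation check is essentially a bonus, but I would still spell it out explicitly to match the descriptions of $\inf$ and $\sup$ given for $\text{Cov}(\A)$ and $\fQ(\A)$.

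First I would address well-definedness: by Theorem \ref{T:closcov}, $\S(\fA,\iota)$ is a closed subset of $\widehat{\rC^*_{max}(\A)}$ containing $\S(\rC^*_e(\A),\iota_{env})$, so $\Omega$ indeed lands in $\fQ(\A)$; and by Proposition \ref{P:toporder} (the ``in particular'' clause), $\S(\fA,\iota)=\S(\fB,j)$ whenever $(\fA,\iota)\sim(\fB,j)$, so $\Omega$ is independent of the choice of representative. Surjectivity is exactly the $(\Leftarrow)$ direction of Theorem \ref{T:closcov}: every $\C\in\fQ(\A)$ arises as $\S(\fA,\iota)$ for some $\rC^*$-cover. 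Injectivity, together with the fact that $\Omega$ and $\Omega^{-1}$ are order-preserving, is precisely Proposition \ref{P:toporder}: $\S(\fA,\iota)\subseteq\S(\fB,j)$ if and only if $(\fA,\iota)\preceq(\fB,j)$, and equality of the spectra corresponds to equivalence of covers. So $\Omega$ is an order isomorphism of posets.

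It remains to confirm that $\fQ(\A)$, with $\inf\D=\bigcap_\lambda F_\lambda$ and $\sup\D=\ol{\bigcup_\lambda F_\lambda}$, is genuinely a complete lattice (a routine point-set check: an arbitrary intersection of closed sets containing a fixed closed set is again such, and the closure of an arbitrary union is the least closed set containing all members and hence still contains $\S(\rC^*_e(\A),\iota_{env})$), and then that $\Omega$ carries the lattice operations of $\text{Cov}(\A)$ to those of $\fQ(\A)$. For the supremum, if $\C=\{[(\fA_\lambda,\iota_\lambda)]\}$ and $\iota=\bigoplus_\lambda\iota_\lambda$ with $\fA=\rC^*(\iota(\A))$, I would unwind the definition of $q_\fA$ and observe that $\ker q_\fA=\bigcap_\lambda\ker q_{\fA_\lambda}$; translating through Theorem \ref{T:DiximierTopology} (the ideal/open-set correspondence, applied to complements) yields $\S(\fA,\iota)=\ol{\bigcup_\lambda\S(\fA_\lambda,\iota_\lambda)}$. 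For the infimum, with $\fJ=\ol{\sum_\lambda\fJ_\lambda}$ the ideal generated by the $\fJ_\lambda=\ker q_{\fA_\lambda}$, the cover $[(\rC^*_{max}(\A)/\fJ, q\circ\mu)]$ has $q_{\rC^*_{max}(\A)/\fJ}=q$, so its spectrum is the set of irreducible $*$-representations of $\rC^*_{max}(\A)$ vanishing on $\fJ$, which is exactly $\bigcap_\lambda\S(\fA_\lambda,\iota_\lambda)$ since a representation kills $\fJ$ iff it kills every $\fJ_\lambda$. Because order isomorphisms of complete lattices automatically preserve $\sup$ and $\inf$, this last verification is a consistency check rather than a logical necessity, but I would include it for concreteness.

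The main obstacle, such as it is, is bookkeeping rather than depth: one must be careful about the duality direction in Theorem \ref{T:DiximierTopology} — closed subsets of $\widehat{\rC^*_{max}(\A)}$ correspond to ideals via \emph{complements}, so ``larger cover'' ($\preceq$-larger, i.e.\ smaller kernel) matches ``larger spectrum'' and hence larger closed set — and to confirm that the closures appearing in $\sup$ on the $\fQ(\A)$ side are the correct topological closures in $\widehat{\rC^*_{max}(\A)}$, which follows from the description of closures in the spectrum via intersections of primitive ideals recalled in Subsection \ref{SS:topology}. Once the correspondence of ideals with closed subsets is lined up correctly, every claim reduces to Theorem \ref{T:closcov} and Proposition \ref{P:toporder}.
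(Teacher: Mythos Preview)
Your proposal is correct and follows essentially the same architecture as the paper: well-definedness, bijectivity, and order-compatibility all come from Theorem \ref{T:closcov} and Proposition \ref{P:toporder}, after which the lattice operations are checked. Two small differences are worth noting. First, you make explicit the (correct) observation that an order isomorphism between complete lattices automatically preserves arbitrary suprema and infima, so the direct verification of $\sup$ and $\inf$ is logically redundant; the paper does not state this and simply checks the operations. Second, for the supremum your route is more streamlined: you identify $\ker q_{\fA}=\bigcap_\lambda \ker q_{\fA_\lambda}$ (which follows from uniqueness of lifts, since $\bigoplus_\lambda q_{\fA_\lambda}$ satisfies the defining property of $q_\fA$) and then invoke the ideal/closed-set dictionary of Theorem \ref{T:DiximierTopology}; the paper instead proves the two inclusions separately, obtaining $\S(\fA,\iota)\subset\ol{\bigcup_\lambda\S(\fA_\lambda,\iota_\lambda)}$ by constructing an auxiliary cover $(\fB,j)$ with that closure as its spectrum and exhibiting a surjection $\fB\to\fA$. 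Both arguments are fine; yours is shorter.
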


The subsequent arguments may be found within \cite[Section 2.1]{hamidi2019admissibility} but we connect the details to draw different conclusions. Hence, we include the proof for completeness.

\begin{proof}
The map $\Omega$ is a well-defined bijection by Theorem \ref{T:closcov} and Proposition \ref{P:toporder}. We will show that $\Omega$ respects {\rm (i)} infima and {\rm (ii)} suprema. Let $\C = \{[(\fA_\lambda, \iota_\lambda)]\}\subset\text{Cov}(\A)$ be some collection. For each $\lambda$, let $\fJ_\lambda = \ker q_{\fA_\lambda}$. Let $\pi$ be an irreducible $*$-representation of $\rC^*_{max}(\A)$.

{\rm (i)} Let $\fJ=\ol{\sum_\lambda\fJ_\lambda}$ and let $q: \rC^*_{max}(\A)\rightarrow\rC^*_{max}(\A)/\fJ$ be the quotient map. Observe that $\S(\rC^*_{max}(\A)/\fJ, q\circ\mu)$ consists of the equivalence classes of irreducible $*$-representations of $\rC^*_{max}(\A)$ which vanish on $\fJ$. As $\pi$ vanishes on $\fJ$ if and only if $\pi$ vanishes on each $\fJ_\lambda$, we establish that \[\S(\rC^*_{max}(\A)/\fJ, q\circ\mu) = \bigcap_\lambda\S(\fA_\lambda, \iota_\lambda).\]

{\rm (ii)} Let $\iota = \bigoplus_\lambda\iota_\lambda$ and $\fA = \rC^*(\iota(\A))$. We show that \begin{equation}\S(\fA,\iota) = \ol{\bigcup_\lambda\S(\fA_\lambda, \iota_\lambda)}.\end{equation}Suppose that $\pi=\sigma\circ q_{\fA_\lambda}$ for some $\lambda$ and some irreducible $*$-representation $\sigma$ of $\fA_\lambda$. Let $\gamma_\lambda: \fA\rightarrow\fA_\lambda$ denote the obvious surjective $*$-representation. Then $\sigma\circ\gamma_\lambda$ is an irreducible $*$-representation of $\fA$. We have that \[\sigma\circ\gamma_\lambda\circ q_\fA\circ \mu = \sigma\circ \gamma_\lambda\circ \iota = \sigma\circ\iota_\lambda = \sigma\circ q_{\fA_\lambda}\circ \mu = \pi\circ\mu.\]So $\pi = \sigma\circ\gamma_\lambda\circ q_\fA$. Hence, $\S(\fA_\lambda, \iota_\lambda)\subset\S(\fA, \iota)$. By Theorem \ref{T:closcov}, $\S(\fA, \iota)$ is closed and so we have that $\ol{\bigcup_\lambda\S(\fA_\lambda, \iota_\lambda)}\subset\S(\fA, \iota).$

Conversely, by Theorem \ref{T:closcov}, there is a $\rC^*$-cover $(\fB, j)$ of $\A$ such that $\S(\fB, j)=\ol{\bigcup_\lambda\S(\fA_\lambda, \iota_\lambda)}$. For each $\lambda$, there is a surjective $*$-representation $\beta_\lambda:\fB\rightarrow \fA_\lambda$ such that $\beta_\lambda\circ j = \iota_\lambda$ by Proposition \ref{P:toporder}. Let $\beta = \bigoplus_\lambda\beta_\lambda$. Then $\beta$ is a $*$-representation satisfying $\beta\circ j = \iota$. So \[\beta(\fB) = \beta(\rC^*(j(\A))) = \rC^*(\beta\circ j(\A)) = \fA\]and we have that $(\fA, \iota)\preceq(\fB, j)$. By Proposition \ref{P:toporder}, we have that $\S(\fA, \iota)\subset\S(\fB, j).$ So equation (1) holds.
\end{proof}

Theorem \ref{T:LatticeIsom} confirms that the language arising from the spectral topology of the maximal $\rC^*$-cover is equivalent to the ordering for $\rC^*$-covers. This reflects the fact that, in place of studying completely isometric representations of an operator algebra, one may study a single completely isometric representation, namely the canonical embedding $\mu:\A\rightarrow\rC^*_{max}(\A)$, along with the representation theory of $\rC^*_{max}(\A)$. Due to the classical construction of the maximal $\rC^*$-cover, it is feasible to interpret the canonical embedding $\mu$ as encoding all the representation theory for the algebra $\A$.

Since the representations of $\A$ are in one-to-one correspondence with the $*$-representations of $\rC^*_{max}(\A)$, it is feasible to identify the spectrum of an operator algebra as the spectrum of the maximal $\rC^*$-cover. However, it is not clear to what degree one can identify the topology at the level of the operator algebra. We provide a partial answer to this question.

Let $\A$ be an operator algebra. If $\pi$ and $\rho$ are representations of $\A$, then $\pi$ and $\rho$ are \emph{approximately unitarily equivalent} if \[\pi(a) = \lim_{n\rightarrow\infty}U_n^*\rho(a)U_n, \ \ \ \ \ \ a\in\A,\]where $U_n:\H_\pi\rightarrow\H_\rho$ is a unitary operator for each $n\in\bN$.

\begin{proposition}\label{P:aueLift}
Let $\A$ be an operator algebra. Let $\pi$ and $\rho$ be representations of $\A$ and let $\theta_\pi, \theta_\rho$ be $*$-representations of $\rC^*_{max}(\A)$ satisfying $\theta_\pi\circ\mu=\pi$ and $\theta_\rho\circ\mu=\rho$. Then, $\pi$ and $\rho$ are approximately unitarily equivalent if and only if $\theta_\pi$ and $\theta_\rho$ are approximately unitarily equivalent.
\end{proposition}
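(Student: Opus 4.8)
The plan is to establish the two implications separately, with the reverse direction being a one-line restriction argument and the forward direction resting on the standard ``closed $*$-subalgebra containing the generators'' principle.

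For the easy direction, suppose $\theta_\pi$ and $\theta_\rho$ are approximately unitarily equivalent, implemented by unitaries $U_n\colon \H_\pi\to\H_\rho$. Evaluating the defining limits on the elements $\mu(a)$, $a\in\A$, and using $\theta_\pi\circ\mu=\pi$ together with $\theta_\rho\circ\mu=\rho$, one immediately recovers $\pi(a)=\lim_n U_n^*\rho(a)U_n$. So nothing further is needed here.

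For the forward direction, fix unitaries $U_n\colon\H_\pi\to\H_\rho$ implementing approximate unitary equivalence of $\pi$ and $\rho$; since $\theta_\pi$ and $\theta_\rho$ act on $\H_\pi$ and $\H_\rho$ respectively, these same operators are the natural candidates to implement approximate unitary equivalence of $\theta_\pi$ and $\theta_\rho$. I would then introduce the set
\[
\fS=\bigl\{\,t\in\rC^*_{max}(\A):\ \lVert\theta_\pi(t)-U_n^*\theta_\rho(t)U_n\rVert\to 0\,\bigr\}
\]
and check that $\fS$ is a closed $*$-subalgebra of $\rC^*_{max}(\A)$. Linearity is the triangle inequality; closure under products follows from the identity $U_n^*\theta_\rho(s)U_n\cdot U_n^*\theta_\rho(t)U_n=U_n^*\theta_\rho(st)U_n$ together with joint norm-continuity of multiplication (the relevant norms stay bounded since conjugation by a unitary is isometric); closure in norm follows from a three-$\varepsilon$ estimate using that $\theta_\pi$ and $\theta_\rho$ are contractive; and closure under adjoints follows from $(U_n^*\theta_\rho(t)U_n)^*=U_n^*\theta_\rho(t^*)U_n$ and the fact that the adjoint operation is isometric. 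The hypothesis says precisely that $\mu(\A)\subseteq\fS$, so $\fS$ is a closed $*$-subalgebra of $\rC^*_{max}(\A)$ containing $\mu(\A)$ and hence equals $\rC^*(\mu(\A))=\rC^*_{max}(\A)$. This yields the conclusion.

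The one point that needs care — the step I would single out — is closure of $\fS$ under adjoints: the passage from $\mu(\A)\subseteq\fS$ to $\mu(\A)^*\subseteq\fS$ relies on the adjoint being norm-isometric, which is exactly why the limits in the definition of approximate unitary equivalence must be taken in operator norm rather than in a weaker topology. Everything else is the routine observation that the relation ``$U_n^*\theta_\rho(\,\cdot\,)U_n$ converges to $\theta_\pi(\,\cdot\,)$'' is stable under the $\rC^*$-operations, so it propagates from the generating subalgebra $\mu(\A)$ to all of $\rC^*_{max}(\A)$.
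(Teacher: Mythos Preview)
Your proof is correct and follows essentially the same approach as the paper. The paper argues the forward direction by noting that the norm-limit relation propagates from $\mu(\A)$ to the linear span of words in $\mu(\A)\cup\mu(\A)^*$ (using continuity of the $*$-algebra operations) and then to the closure; your set $\fS$ is simply a cleaner packaging of exactly this argument, and your emphasis on the adjoint step being the key place where the norm topology is essential matches the paper's implicit reasoning.
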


\begin{proof}
$(\Rightarrow)$ Suppose that \[\pi(a)=\lim_{n\rightarrow\infty}U_n^*\rho(a)U_n, \ \ \ \ \ \ a\in\A,\]where $U_n:\H_\pi\rightarrow\H_\rho$ is unitary for each $n\in\bN$. So \[\theta_\pi(\mu(a)) = \lim_{n\rightarrow\infty}U_n^*\theta_\rho(\mu(a))U_n, \ \ \ \ \ \ a\in\A.\]As addition, multiplication and the adjoint are continuous in the norm topology, we infer that \[\theta_\pi(w) = \lim_{n\rightarrow\infty}U_n^*\theta_\rho(w)U_n\]for each $w\in\rC^*_{max}(\A)$ which lies in the linear span of words in $\mu(\A)\cup\mu(\A)^*$. This set is dense in $\rC^*_{max}(\A)$ and so $\theta_\pi$ is approximately unitarily equivalent to $\theta_\rho$.

$(\Leftarrow)$ Suppose that \[\theta_\pi(t) = \lim_{n\rightarrow\infty}U_n^*\theta_\rho(t)U_n, \ \ \ \ \ \ t\in\rC^*_{max}(\A),\]where $U_n:\H_\pi\rightarrow\H_\rho$ is unitary for each $n\in\bN$. In particular, for each $a\in\A$, \[\pi(a) = \theta_\pi(\mu(a)) = \lim_{n\rightarrow\infty}U_n^*\theta_\rho(\mu(a))U_n = \lim_{n\rightarrow\infty}U_n^*\rho(a)U_n.\]So $\pi$ and $\rho$ are approximately unitarily equivalent.
\end{proof}

By \cite[Corollary 4.1.10]{diximier1977c} and \cite[Corollary 2.5.6]{davidson1996c}, it is easily seen that irreducible $*$-representations of a separable $\rC^*$-algebra are weakly equivalent if and only if they are approximately unitarily equivalent. Consequently, the spectrum of a separable operator algebra can be identified pointwise as the spectrum of the maximal $\rC^*$-cover. In spite of this, it is not obvious whether one can make a similar statement involving weak containment (which determines the topology on $\widehat{\rC^*_{max}(\A)}$). Indeed, if one were to define weak containment of representations of $\A$ (in any of the equivalent forms presented in Subsection \ref{SS:topology}), then it is not true that this is equivalent to weak containment of the respective lifts to $\rC^*_{max}(\A)$. This can be seen by taking two completely isometric representations of an operator algebra which induce inequivalent $\rC^*$-covers.

\section{The RFD-Maximal $\rC^*$-Cover} \label{S:RFDmax}

Fix an RFD operator algebra $\A$. Our goal in this section is to analyse the largest RFD $\rC^*$-cover of $\A$ in the partial ordering among all $\rC^*$-covers of $\A$ which generate an RFD $\rC^*$-algebra. We refer to this $\rC^*$-cover as the \emph{RFD-maximal $\rC^*$-cover} of $\A$. It is unclear what likeness the RFD-maximal $\rC^*$-cover might have to the maximal $\rC^*$-cover. If Question \ref{Q:main} were to have an affirmative answer, then the RFD-maximal $\rC^*$-cover inherits all possible characteristics of the maximal $\rC^*$-cover. This will be the motivation behind the material found within Subsection \ref{SS:RFDmaxAlgProp}.

We outline our procedure. First we show that the RFD-maximal $\rC^*$-cover of an RFD operator algebra always exists (Theorem \ref{T:existRFDmax}). Following this, in Theorem \ref{T:topRFDmax}, we identify the spectrum of the RFD-maximal $\rC^*$-cover through spectral data of the maximal $\rC^*$-cover. In Theorem \ref{T:concreterepn}, we provide a concrete representation of the RFD-maximal $\rC^*$-cover which is akin to the classical construction of the maximal $\rC^*$-cover. This naturally endows the RFD-maximal $\rC^*$-cover with a certain universal property among RFD $\rC^*$-covers. Ending this section, we provide two instances in which the RFD-maximal $\rC^*$-cover shares commonality with the maximal $\rC^*$-cover (Theorems \ref{T:RFDmaxDirectSum} and \ref{T:freeproduct}).

\subsection{Abstract Characterization} \label{SS:abstract}

To determine the RFD-maximal $\rC^*$-cover, we require a pivotal observation on how residual finite-dimensionality is affected in $\text{Cov}(\A)$.

\begin{lemma}\label{L:supRFD}
Let $\A$ be an operator algebra and $\R = \{[(\fA_\lambda, \iota_\lambda)]\}\subset\text{Cov}(\A)$ be some collection such that $\fA_\lambda$ is an RFD $\rC^*$-algebra for each $\lambda$. If $(\fA, \iota)$ is any representative for $\sup\R$, then $\fA$ is an RFD $\rC^*$-algebra. 
\end{lemma}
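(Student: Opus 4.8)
The plan is to reduce to two elementary facts about residual finite-dimensionality of $\rC^*$-algebras: that it is inherited by $\rC^*$-subalgebras (as already noted in Example \ref{E:DiscAlg}) and that it is preserved under arbitrary direct products.

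First I would use the explicit description of suprema in $\text{Cov}(\A)$ recorded just before Theorem \ref{T:LatticeIsom}: one representative of $\sup\R$ is $(\rC^*(\iota(\A)),\iota)$ with $\iota=\bigoplus_\lambda\iota_\lambda$. Since any two representatives of $\sup\R$ are equivalent $\rC^*$-covers, and equivalence of $\rC^*$-covers yields a $*$-isomorphism of the underlying $\rC^*$-algebras, it suffices to prove that $\fA:=\rC^*(\iota(\A))$ is RFD for this particular $\iota$. Now $\iota(a)=\bigoplus_\lambda\iota_\lambda(a)$ is block diagonal for each $a\in\A$, so every product, adjoint, and norm limit of elements of $\iota(\A)$ is computed coordinatewise, and the $\lambda$-th coordinate of any such element lies in $\rC^*(\iota_\lambda(\A))=\fA_\lambda$. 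Hence $\fA$ embeds as a $\rC^*$-subalgebra of the direct product $\prod_\lambda\fA_\lambda$.

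It remains to observe that $\prod_\lambda\fA_\lambda$ is RFD and that this descends to $\fA$. For each $\lambda$ choose positive integers $r_{\lambda,\alpha}$, $\alpha\in A_\lambda$, together with an injective (hence completely isometric) $*$-representation $\fA_\lambda\hookrightarrow\prod_{\alpha\in A_\lambda}\bM_{r_{\lambda,\alpha}}$; taking the product over $\lambda$ yields an injective $*$-representation $\prod_\lambda\fA_\lambda\hookrightarrow\prod_\lambda\prod_{\alpha\in A_\lambda}\bM_{r_{\lambda,\alpha}}$, so $\prod_\lambda\fA_\lambda$ is RFD. Restricting this embedding to $\fA$ shows $\fA$ is RFD, completing the proof.

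I do not anticipate a genuine obstacle: the argument is routine, the only points needing care being that the (possibly uncountable) index sets cause no trouble since all direct products are unrestricted, and the coordinatewise identification of $\fA$ as a subalgebra of $\prod_\lambda\fA_\lambda$. As an alternative one could argue spectrally: by Theorem \ref{T:LatticeIsom} and its proof $\S(\fA,\iota)=\ol{\bigcup_\lambda\S(\fA_\lambda,\iota_\lambda)}$; each $\S(\fA_\lambda,\iota_\lambda)\cong\widehat{\fA_\lambda}$ has a dense set of finite-dimensional irreducible classes by Theorem \ref{T:RFDdensity}, these remain finite-dimensional irreducible within $\S(\fA,\iota)$, hence are dense there, and Theorem \ref{T:RFDdensity} then gives that $\fA$ is RFD.
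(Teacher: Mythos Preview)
Your proposal is correct and follows essentially the same route as the paper: reduce to the canonical representative $\rC^*(\iota(\A))$ with $\iota=\bigoplus_\lambda\iota_\lambda$, observe it sits inside $\prod_\lambda\fA_\lambda$, and use that RFD passes to subalgebras of products of RFD $\rC^*$-algebras. The paper's proof is terser (it takes the product-of-RFD-is-RFD step for granted), but the argument is the same; your spectral alternative via Theorem~\ref{T:LatticeIsom} is a nice variant but not what the paper does here.
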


\begin{proof}
First note that if $(\fB, j)$ is an RFD $\rC^*$-cover of $\A$, then any representative of $[(\fB, j)]$ is also RFD. Now, letting $\iota = \bigoplus_{\lambda}\iota_\lambda$, we have that $\sup\R = [(\rC^*(\iota(\A)), \iota)]$ and\[\rC^*(\iota(\A))\subset\prod_{\lambda}\fA_\lambda.\]As each $\fA_\lambda$ is an RFD $\rC^*$-algebra, $\rC^*(\iota(\A))$ is RFD as well. Therefore any representative of $\sup\R$ is RFD.
\end{proof}

Due to Lemma \ref{L:supRFD}, we may deduce the existence of the RFD-maximal $\rC^*$-cover:

\begin{theorem}\label{T:existRFDmax}
Let $\A$ be an RFD operator algebra and \[\R = \{ [(\fA, \iota)]\in\text{Cov}(\A): \fA \text{ is RFD}\}.\]Then, there is a unique element $[(\fR(\A), \upsilon)]\in\R$ which is maximal for $\R$.
\end{theorem}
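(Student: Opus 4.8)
The plan is to extract the maximal element of $\R$ as the supremum of the entire collection $\R$, using the complete lattice structure on $\text{Cov}(\A)$ together with Lemma \ref{L:supRFD}. Concretely, since $\text{Cov}(\A)$ is a complete lattice (as recorded before Theorem \ref{T:LatticeIsom}), the element $\sup\R$ exists in $\text{Cov}(\A)$. First I would observe that $\R$ is nonempty: because $\A$ is RFD, there is a completely isometric representation $\iota_0:\A\rightarrow\prod_\lambda\bM_{r_\lambda}$, and $\rC^*(\iota_0(\A))$ is a subalgebra of a product of finite-dimensional $\rC^*$-algebras, hence RFD, so $[(\rC^*(\iota_0(\A)),\iota_0)]\in\R$.

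Next I would set $[(\fR(\A),\upsilon)]:=\sup\R$ and verify the two required properties. For membership in $\R$, apply Lemma \ref{L:supRFD} directly with the collection $\R$ itself: it gives that any representative of $\sup\R$ generates an RFD $\rC^*$-algebra, so $[(\fR(\A),\upsilon)]\in\R$. For maximality, note that by the very definition of supremum in the lattice $\text{Cov}(\A)$, every $[(\fA,\iota)]\in\R$ satisfies $[(\fA,\iota)]\preceq\sup\R=[(\fR(\A),\upsilon)]$. Uniqueness of a maximal element then follows from antisymmetry of the partial ordering $\preceq$ on $\text{Cov}(\A)$: if $[(\fB,j)]$ were another maximal element of $\R$, then $[(\fB,j)]\preceq[(\fR(\A),\upsilon)]$ and, by maximality of $[(\fB,j)]$ together with $[(\fR(\A),\upsilon)]\in\R$, also $[(\fR(\A),\upsilon)]\preceq[(\fB,j)]$, whence the two classes coincide.

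I do not expect a serious obstacle here; the content has essentially been front-loaded into the lattice machinery of Section \ref{S:TopOrder} and into Lemma \ref{L:supRFD}. The one point that needs a word of care is that Lemma \ref{L:supRFD} is stated for an arbitrary subcollection of $\text{Cov}(\A)$ whose members generate RFD $\rC^*$-algebras, and $\R$ is exactly such a collection, so there is no set-theoretic size issue beyond what is already implicit in the existence of suprema in the complete lattice $\text{Cov}(\A)$ (equivalently, in $\fQ(\A)$ via Theorem \ref{T:LatticeIsom}). Thus the proof is short: nonemptiness of $\R$, then $\sup\R\in\R$ by Lemma \ref{L:supRFD}, then maximality and uniqueness from the order structure.

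Alternatively — and perhaps more transparently — one could phrase the argument through Theorem \ref{T:LatticeIsom}: transport $\R$ to the family $\{\S(\fA,\iota):[(\fA,\iota)]\in\R\}$ of closed subsets of $\widehat{\rC^*_{max}(\A)}$, take the closure of their union (which is $\sup$ in $\fQ(\A)$), and pull back via the lattice isomorphism $\Omega$; Lemma \ref{L:supRFD} identifies the pullback as an RFD $\rC^*$-cover. I would present the first, self-contained version in the paper, since it does not require invoking the homeomorphism-level description, but I would remark on the lattice-theoretic interpretation for context.
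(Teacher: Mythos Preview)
Your argument is correct. Both your proof and the paper's rest on Lemma~\ref{L:supRFD}, but you deploy it differently: the paper applies the lemma only to chains in $\R$ so as to invoke Zorn's Lemma and obtain a maximal element, whereas you apply it once to the entire collection $\R$ and thereby obtain $\sup\R$ as a \emph{maximum} element directly. Your route is slightly more economical---it avoids the appeal to Zorn's Lemma altogether and makes uniqueness immediate from antisymmetry, rather than requiring the additional (implicit) step of comparing two maximal elements via their pairwise supremum. The paper's approach, on the other hand, does not need to appeal to the completeness of the lattice $\text{Cov}(\A)$ as a black box, since Zorn only requires upper bounds for chains. Either way, the substance is Lemma~\ref{L:supRFD}; your version simply extracts its full strength in one stroke.
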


\begin{proof}
As $\A$ is RFD, there is an RFD $\rC^*$-cover and so $\R$ is non-empty. Suppose that $\C$ is a chain in $\R$. By Lemma \ref{L:supRFD}, we have that $\sup\C\in\R$. By Zorn's Lemma, $\R$ contains maximal elements. If $[(\fS, \tau)]\in\R$ is also maximal for $\R$, then $(\fR, \upsilon)\preceq(\fS, \tau)$ and $(\fS, \tau)\preceq(\fR, \upsilon)$. So $[(\fR, \upsilon)]=[(\fS, \tau)]$.
\end{proof}

Given an RFD operator algebra $\A$, we will let $(\fR(\A), \upsilon)$ denote the \emph{RFD-maximal $\rC^*$-cover} of $\A$ as found in Theorem \ref{T:existRFDmax}. The RFD-maximal $\rC^*$-cover is unique up to equivalence of $\rC^*$-covers and, as with the maximal and minimal $\rC^*$-covers, we will frequently fix a representative of $[(\fR(\A), \upsilon)]$. Later, we will derive some explicit representations of this $\rC^*$-cover. First, we highlight the connection this $\rC^*$-cover has to Question \ref{Q:main}:

\begin{corollary}\label{C:RFDmaxiffmaxRFD}
Let $\A$ be an operator algebra. Then, $\rC^*_{max}(\A)$ is an RFD $\rC^*$-algebra if and only if $(\fR(\A), \upsilon)\sim(\rC^*_{max}(\A), \mu)$.
\end{corollary}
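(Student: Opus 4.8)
Corollary \ref{C:RFDmaxiffmaxRFD} is an easy consequence of the fact (Theorem \ref{T:existRFDmax}) that $[(\fR(\A),\upsilon)]$ is the \emph{largest} element of the poset $\R$ of equivalence classes of RFD $\rC^*$-covers, together with the fact (recalled in Subsection \ref{SS:CstarCovers}) that $[(\rC^*_{max}(\A),\mu)]$ is the largest element of $\text{Cov}(\A)$ altogether. I should first note that the statement implicitly assumes $\A$ is RFD in the backward direction; but in fact if $\rC^*_{max}(\A)$ is RFD then $\A$, being completely isometrically embedded in $\rC^*_{max}(\A)$ via $\mu$, is RFD (residual finite-dimensionality passes to subalgebras, as observed in Example \ref{E:DiscAlg}), so $\fR(\A)$ is defined and $\R\neq\mt$ in either direction.

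\begin{proof}
First suppose $\rC^*_{max}(\A)$ is an RFD $\rC^*$-algebra. Since $\mu$ is completely isometric and residual finite-dimensionality passes to (operator) subalgebras, $\A$ is RFD, so $\fR(\A)$ is defined and $[(\rC^*_{max}(\A),\mu)]\in\R$. By Theorem \ref{T:existRFDmax}, $[(\rC^*_{max}(\A),\mu)]\preceq[(\fR(\A),\upsilon)]$. On the other hand, the maximal $\rC^*$-cover is the largest element of $\text{Cov}(\A)$, so $[(\fR(\A),\upsilon)]\preceq[(\rC^*_{max}(\A),\mu)]$. Hence $(\fR(\A),\upsilon)\sim(\rC^*_{max}(\A),\mu)$.

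Conversely, suppose $(\fR(\A),\upsilon)\sim(\rC^*_{max}(\A),\mu)$. In particular $\A$ is RFD (so that $\fR(\A)$ is defined), and equivalent $\rC^*$-covers have $*$-isomorphic underlying $\rC^*$-algebras. Since $\fR(\A)$ is RFD by construction (Theorem \ref{T:existRFDmax}), it follows that $\rC^*_{max}(\A)\cong\fR(\A)$ is an RFD $\rC^*$-algebra.
\end{proof}

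The only thing to be careful about is the hypothesis on $\A$: the backward direction needs $\A$ RFD merely so that the symbol $\fR(\A)$ is meaningful, and this is automatic once one invokes that subalgebras of RFD $\rC^*$-algebras are RFD. There is no real obstacle here — the content is entirely packaged in Theorem \ref{T:existRFDmax} and in the universal/maximality property of $\rC^*_{max}(\A)$ from \cite[Proposition 2.4.2]{blecher2004operator} — so the proof is a two-line comparison of extremal elements in the lattice $\text{Cov}(\A)$.
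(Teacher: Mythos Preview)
Your proof is correct and follows essentially the same approach as the paper's own proof: both directions use that $[(\fR(\A),\upsilon)]$ is the maximum of $\R$ (Theorem \ref{T:existRFDmax}) together with the fact that $[(\rC^*_{max}(\A),\mu)]$ is the maximum of $\text{Cov}(\A)$, and that equivalent $\rC^*$-covers have $*$-isomorphic underlying $\rC^*$-algebras. Your additional remarks about $\A$ being RFD are accurate and match the paper's brief observation that ``$\A$ is RFD itself'' in the forward direction.
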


\begin{proof}
$(\Rightarrow)$ When $\rC^*_{max}(\A)$ is RFD, $\A$ is RFD itself. By Theorem \ref{T:existRFDmax}, we have that the RFD-maximal $\rC^*$-cover for $\A$ exists. Also, we have that $(\rC^*_{max}(\A), \mu)\preceq(\fR(\A),\upsilon)$ as $\rC^*_{max}(\A)$ is RFD. However, as the maximal $\rC^*$-cover is maximal in the ordering, we have $(\fR(\A), \upsilon)\preceq(\rC^*_{max}(\A), \mu)$.

$(\Leftarrow)$ If $(\fR(\A), \upsilon)\sim(\rC^*_{max}(\A), \mu)$, then there is a $*$-isomorphism $\pi:\fR(\A)\rightarrow\rC^*_{max}(\A)$. As $\fR(\A)$ is an RFD $\rC^*$-algebra, so is $\rC^*_{max}(\A)$.
\end{proof}

Next, we provide defining properties for the RFD-maximal $\rC^*$-cover of an operator algebra. This is achieved by analyzing the spectrum of the maximal $\rC^*$-cover. Here, we identify a dense subset of the spectrum of the RFD-maximal $\rC^*$-cover. This is very natural when utilizing Proposition \ref{P:toporder}: the RFD-maximal $\rC^*$-cover is the largest RFD $\rC^*$-cover of $\A$, both under the ordering of $\rC^*$-covers and under inclusion of the closed subsets $\S(\fA,\iota)$ where $(\fA, \iota)$ is an RFD $\rC^*$-cover of $\A$.

\begin{theorem}\label{T:topRFDmax}
Let $\A$ be an RFD operator algebra. Let $q: \rC^*_{max}(\A)\rightarrow\fR(\A)$ denote the unique surjective $*$-representation satisfying $q\circ\mu = \upsilon$ and let $\fJ = \ker q$. Then, the following statements hold: \begin{enumerate}[{\rm (i)}]
\item $\S(\fR(\A), \upsilon)$ is the closure of all equivalence classes which consist of finite-dimensional irreducible $*$-representations of $\rC^*_{max}(\A)$;
\item $\fJ$ does not possess any finite-dimensional $*$-representations;
\item $\S(\fR(\A), \upsilon)$ is the unique maximal closed subset of $\widehat{\rC^*_{max}(\A)}$ possessing a dense subset of equivalence classes of finite-dimensional irreducible $*$-representations.
\end{enumerate}
\end{theorem}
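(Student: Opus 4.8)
The plan is to characterize $\fJ = \ker q$ directly using the lattice isomorphism of Theorem \ref{T:LatticeIsom} together with the characterization of RFD $\rC^*$-algebras in Theorem \ref{T:RFDdensity}. First I would prove (i). Let $\D \subset \widehat{\rC^*_{max}(\A)}$ denote the set of equivalence classes of finite-dimensional irreducible $*$-representations of $\rC^*_{max}(\A)$, and let $F = \overline{\D}$. The goal is to show $\S(\fR(\A),\upsilon) = F$. Since $\A$ is RFD, there is an RFD $\rC^*$-cover, and such a cover is completely normed by finite-dimensional representations; pulling those representations back along $q_{\fA}$ and decomposing each into irreducibles, the resulting finite-dimensional irreducible $*$-representations of $\rC^*_{max}(\A)$ are jointly faithful on $\mu(\A)$, hence their common kernel is contained in $\Sh\A$, so $F$ contains $\S(\rC^*_e(\A),\iota_{env})$ and thus $F \in \fQ(\A)$. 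By Theorem \ref{T:closcov} (or Theorem \ref{T:LatticeIsom}), $F = \S(\fB,k)$ for some $\rC^*$-cover $(\fB,k)$. That cover is RFD: by Theorem \ref{T:RFDdensity}(ii), $\fB$ is RFD precisely when the finite-dimensional irreducible classes are dense in $\widehat{\fB}$, and under the homeomorphism $\widehat{\fB} \cong \S(\fB,k) = F$ these correspond exactly to the classes in $\D \cap F = \D$, which are dense in $F$ by construction. Hence $(\fB,k) \preceq (\fR(\A),\upsilon)$, giving $F \subseteq \S(\fR(\A),\upsilon)$. Conversely, $\S(\fR(\A),\upsilon) \cong \widehat{\fR(\A)}$ and $\fR(\A)$ is RFD, so by Theorem \ref{T:RFDdensity}(ii) the finite-dimensional irreducible classes of $\fR(\A)$ — which pull back to classes in $\D$ lying in $\S(\fR(\A),\upsilon)$ — are dense in $\S(\fR(\A),\upsilon)$; since $\S(\fR(\A),\upsilon)$ is closed, $\S(\fR(\A),\upsilon) \subseteq \overline{\D} = F$. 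This proves (i).

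Next I would deduce (ii). By Theorem \ref{T:DiximierTopology}(iv), the open set $\U_{\fJ} = \widehat{\rC^*_{max}(\A)} \setminus \S(\fR(\A),\upsilon)$ is homeomorphic to $\widehat{\fJ}$. A finite-dimensional $*$-representation of $\fJ$, decomposed into irreducibles, would supply a finite-dimensional irreducible $*$-representation of $\fJ$, i.e.\ a class in $\widehat{\fJ}$ corresponding (via the extension-to-$\rC^*_{max}(\A)$ of Theorem \ref{T:DiximierTopology}) to a finite-dimensional irreducible class $[\pi]$ of $\rC^*_{max}(\A)$ lying in $\U_{\fJ}$. But every finite-dimensional irreducible class of $\rC^*_{max}(\A)$ lies in $\D \subseteq \overline{\D} = \S(\fR(\A),\upsilon) = \U_{\fJ}^C$ by part (i), a contradiction. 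Hence $\fJ$ has no finite-dimensional $*$-representations at all.

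Finally, (iii): maximality. Suppose $C \subseteq \widehat{\rC^*_{max}(\A)}$ is a closed subset possessing a dense subset of finite-dimensional irreducible classes. First, such a $C$ automatically contains $\S(\rC^*_e(\A),\iota_{env})$: indeed the dense finite-dimensional irreducible classes in $C$ are jointly faithful on the quotient $\rC^*_{max}(\A)/\fK$ where $\fK$ is the ideal with $\U_{\fK}^C = C$ (using the order-reversing ideal/open-set correspondence of Theorem \ref{T:DiximierTopology}), and one checks the quotient map restricted to $\mu(\A)$ is completely isometric because finite-dimensional covers detect norms in the RFD algebra $\A$ — more directly, $C = \S(\fA,\iota)$ will follow once we know $\fK \subseteq \Sh\A$, which holds because the cover $\rC^*_{max}(\A)/\fK$ is RFD (its spectrum $C$ has dense finite-dimensional irreducible classes, apply Theorem \ref{T:RFDdensity}(ii)) and is therefore completely normed by finite-dimensional representations, so it admits a $*$-epimorphism onto $\rC^*_e(\A)$. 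Thus $C = \S(\fA,\iota) \in \fQ(\A)$ and $(\fA,\iota)$ is an RFD $\rC^*$-cover, whence $(\fA,\iota) \preceq (\fR(\A),\upsilon)$ and, by Proposition \ref{P:toporder}, $C \subseteq \S(\fR(\A),\upsilon)$. Together with part (i) this shows $\S(\fR(\A),\upsilon)$ is the largest such closed subset, and it is clearly itself of this type, so it is the unique maximal one.

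The main obstacle I anticipate is the bookkeeping in the direction "$C$ has dense finite-dimensional irreducible classes $\Rightarrow$ the associated quotient $\rC^*$-cover is genuinely RFD and genuinely a $\rC^*$-cover (i.e.\ the quotient map is completely isometric on $\mu(\A)$)." The RFD-ness is Theorem \ref{T:RFDdensity}(ii) applied to $\rC^*_{max}(\A)/\fK$ via the homeomorphism $\widehat{\rC^*_{max}(\A)/\fK} \cong C$ of Theorem \ref{T:DiximierTopology}(iii), but one must be careful that the finite-dimensional irreducible classes of the \emph{quotient} are exactly the finite-dimensional irreducible classes of $\rC^*_{max}(\A)$ lying in $C$ — this is precisely the content of the homeomorphism, so it is fine, just needs to be said. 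The complete isometry of $q \circ \mu$ then does \emph{not} need a separate argument if one instead argues: any $C$ with dense finite-dimensional irreducible classes contains $\overline{\D}$ only if... — actually the cleanest route is to first establish $\overline{\D} = \S(\fR(\A),\upsilon)$ is the largest \emph{element of $\fQ(\A)$} with this density property (all elements of $\fQ(\A)$ are automatically spectra of $\rC^*$-covers by Theorem \ref{T:closcov}), and then separately note that \emph{any} closed $C$ with dense finite-dimensional irreducible classes must lie in $\fQ(\A)$ because $\overline{\D} \subseteq C$ is false in general — hence the genuinely delicate point is showing $\S(\rC^*_e(\A),\iota_{env}) \subseteq C$, for which the RFD-quotient-admits-epimorphism-onto-envelope argument above is the right tool.
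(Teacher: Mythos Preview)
Your arguments for (i) and (ii) are correct and follow essentially the same route as the paper: identify the closure $\overline{\D}$ of the finite-dimensional irreducible classes as an element of $\fQ(\A)$, recognize the corresponding cover as RFD via Theorem \ref{T:RFDdensity}(ii), and conclude by maximality of $(\fR(\A),\upsilon)$ and Proposition \ref{P:toporder}.

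Your treatment of (iii), however, contains a genuine error. You attempt to show that \emph{any} closed $C\subset\widehat{\rC^*_{max}(\A)}$ with a dense subset of finite-dimensional irreducible classes must contain $\S(\rC^*_e(\A),\iota_{env})$, and hence lie in $\fQ(\A)$. This is false. Take $C=\{[\pi]\}$ for a single finite-dimensional irreducible $\pi$ (such classes exist since $\A$ is RFD): this $C$ is closed with trivially dense finite-dimensional classes, yet it will not contain $\S(\rC^*_e(\A),\iota_{env})$ in general. Correspondingly, the quotient $\rC^*_{max}(\A)/\fK$ you form is indeed RFD, but the induced map on $\mu(\A)$ is just $a\mapsto\pi(\mu(a))$, which is typically not completely isometric; so this quotient is not a $\rC^*$-cover and your ``RFD quotient admits an epimorphism onto $\rC^*_e(\A)$'' step cannot be executed. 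The argument is circular at exactly the point you flagged as delicate.

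The good news is that (iii) is immediate from (i) by a one-line observation you bypassed: if $C$ is closed and the finite-dimensional irreducible classes in $C$ are dense in $C$, then since every such class belongs to $\D$ we have $C\subset\overline{\D}$, and $\overline{\D}=\S(\fR(\A),\upsilon)$ by (i). No appeal to $\fQ(\A)$, covers, or the envelope is needed. This is precisely why the paper dispatches (iii) as ``obvious by (i).''
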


\begin{proof}
{\rm (i)}: In Theorem \ref{T:DiximierTopology} {\rm (iii)}, the homeomorphism produces a bijection between equivalence classes of finite-dimensional $*$-representations. Hence, by Theorem \ref{T:RFDdensity}, if $(\fA, \iota)$ is a $\rC^*$-cover of $\A$, then $\fA$ is RFD if and only if there is a dense subset of equivalence classes in $\S(\fA, \iota)$ consisting of finite-dimensional $*$-representations.

Let $\fF\subset\widehat{\rC^*_{max}(\A)}$ be the closure of all equivalence classes of finite-dimensional irreducible $*$-representations. As $\A$ is RFD, we have that $\fF$ contains $\S(\rC^*_e(\A), \iota_{env})$. Indeed, if $(\fA, \iota)$ be an RFD $\rC^*$-cover, then $\S(\fA, \iota)$ contains $\S(\rC^*_e(\A), \iota_{env})$ and possesses a dense subset of equivalence classes consisting of finite-dimensional $*$-representations. By Theorem \ref{T:closcov}, there is a $\rC^*$-cover $(\fA,\iota)$ such that $\S(\fA, \iota) = \fF$. Note that $\fF$ is the largest closed subset of $\widehat{\rC^*_{max}(\A)}$ containing a dense subset of equivalence classes of finite-dimensional irreducible $*$-representations. By Theorem \ref{T:LatticeIsom}, $(\fA,\iota)$ is the largest RFD $\rC^*$-cover up to equivalence. By Theorem \ref{T:existRFDmax}, we have that $(\fA, \iota)\sim(\fR(\A),\upsilon)$. Whence, $\S(\fR(\A), \upsilon) = \fF$ by Proposition \ref{P:toporder}.

{\rm (ii)}: Let $\pi$ be a finite-dimensional irreducible $*$-representation of $\fJ$. As $\fJ$ is an ideal of $\rC^*_{max}(\A)$, we may extend $\pi$ to a finite-dimensional irreducible $*$-representation of $\rC^*_{max}(\A)$, still denoted $\pi$. As $\pi\mid_\fJ$ is non-zero, we see that $[\pi]\notin\S(\fR(\A), \upsilon)$, a contradiction by {\rm (i)}.

{\rm (iii)}: This is obvious by {\rm (i)}.
\end{proof}

Using the language of Section \ref{S:TopOrder}, we may view Theorem \ref{T:topRFDmax} {\rm (iii)} as the topological counterpart to the order-theoretic statement of Theorem \ref{T:existRFDmax}.

Theorem \ref{T:topRFDmax} {\rm (ii)} is rather interesting with regards to Question \ref{Q:main}. A counterexample to Question \ref{Q:main} would be determined by the presence of a particular ideal whose irreducible $*$-representations are all infinite-dimensional. Thus, the ideal $\fJ$ should be easy to detect. In this sense, it is surprising that the answer to Question \ref{Q:main} is still unknown.

We highlight the following consequence which allows for a clear identification of the RFD-maximal $\rC^*$-cover:

\begin{corollary}\label{C:allFDfactor}
Let $\A$ be an RFD operator algebra and $(\fA, \iota)$ be a $\rC^*$-cover for $\A$. Then, the following statements are equivalent: \begin{enumerate}[{\rm (i)}]
\item $(\fA, \iota)\sim(\fR(\A), \upsilon)$;
\item $\fA$ is an RFD $\rC^*$-algebra and, if $[\sigma]\in\widehat{\rC^*_{max}(\A)}$ is an equivalence class consisting of finite-dimensional $*$-representations, then $[\sigma]\in\S(\fA, \iota)$.
\end{enumerate}
\end{corollary}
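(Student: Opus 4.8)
The plan is to derive Corollary \ref{C:allFDfactor} as a direct consequence of Theorem \ref{T:topRFDmax}, together with the lattice/order dictionary established in Section \ref{S:TopOrder} (Proposition \ref{P:toporder} and Theorem \ref{T:closcov}). The two implications are essentially unwindings of the characterization in Theorem \ref{T:topRFDmax}(i) that $\S(\fR(\A),\upsilon)$ is the closure $\fF$ of the set of equivalence classes of finite-dimensional irreducible $*$-representations of $\rC^*_{max}(\A)$.

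For (i)$\Rightarrow$(ii): if $(\fA,\iota)\sim(\fR(\A),\upsilon)$, then $\fA$ is RFD by definition of the RFD-maximal $\rC^*$-cover (and since residual finite-dimensionality passes between equivalent $\rC^*$-covers, as noted in the proof of Lemma \ref{L:supRFD}). By Proposition \ref{P:toporder}, $\S(\fA,\iota)=\S(\fR(\A),\upsilon)=\fF$. Since $\fF$ is by construction the \emph{closure} of the set of equivalence classes of finite-dimensional irreducible $*$-representations, it contains every such class $[\sigma]$; hence $[\sigma]\in\S(\fA,\iota)$, giving (ii). (One should note here that "equivalence class consisting of finite-dimensional $*$-representations" should be read as an equivalence class of an irreducible $*$-representation all of whose members are finite-dimensional, which is automatic once one member is finite-dimensional; this matches the usage in Theorem \ref{T:topRFDmax}.)

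For (ii)$\Rightarrow$(i): assume $\fA$ is RFD and every equivalence class of finite-dimensional irreducible $*$-representations of $\rC^*_{max}(\A)$ lies in $\S(\fA,\iota)$. Since $\S(\fA,\iota)$ is closed by Theorem \ref{T:closcov}, it must then contain the closure of all such classes, namely $\fF=\S(\fR(\A),\upsilon)$; thus $\S(\fR(\A),\upsilon)\subseteq\S(\fA,\iota)$, which by Proposition \ref{P:toporder} gives $(\fR(\A),\upsilon)\preceq(\fA,\iota)$. On the other hand, because $\fA$ is RFD, $(\fA,\iota)$ belongs to the collection $\R$ of Theorem \ref{T:existRFDmax} and hence $(\fA,\iota)\preceq(\fR(\A),\upsilon)$ by maximality of the RFD-maximal $\rC^*$-cover. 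Combining the two inequalities yields $(\fA,\iota)\sim(\fR(\A),\upsilon)$.

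I do not anticipate a genuine obstacle here: the corollary is a bookkeeping consequence of the preceding results, and the only point requiring minor care is making sure the phrase "equivalence class consisting of finite-dimensional $*$-representations" is interpreted consistently with Theorem \ref{T:topRFDmax}(i) (irreducible, and finite-dimensional — irreducibility being needed so that the class lives in $\widehat{\rC^*_{max}(\A)}$ and so that the density statement of Theorem \ref{T:RFDdensity}(ii) applies). If one wanted to be fully explicit, one could insert a sentence recalling that $\fF$ is precisely the closure appearing in Theorem \ref{T:topRFDmax}(i), so that (ii) is literally the statement "$\fA$ is RFD and $\fF\subseteq\S(\fA,\iota)$," after which both directions are immediate from Proposition \ref{P:toporder} and Theorem \ref{T:existRFDmax}.
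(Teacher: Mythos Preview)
Your proposal is correct and follows essentially the same approach as the paper: both directions are derived from Theorem~\ref{T:topRFDmax}(i) together with Proposition~\ref{P:toporder}, Theorem~\ref{T:closcov}, and the maximality in Theorem~\ref{T:existRFDmax}. The only cosmetic difference is that in (ii)$\Rightarrow$(i) you first establish $\S(\fR(\A),\upsilon)\subset\S(\fA,\iota)$ and then invoke maximality, whereas the paper reverses the order of these two steps.
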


\begin{proof}
{\rm (i)}$\Rightarrow${\rm (ii)}: This follows by Theorem \ref{T:topRFDmax} {\rm (i)}.

{\rm (ii)}$\Rightarrow${\rm (i)}: Since $\fA$ is an RFD $\rC^*$-algebra, we have that $(\fA, \iota)\preceq(\fR(\A), \upsilon)$. By Proposition \ref{P:toporder}, we have that $\S(\fA, \iota)\subset\S(\fR(\A), \upsilon)$.  By Theorem \ref{T:closcov} and Theorem \ref{T:topRFDmax} {\rm(i)}, we see that  $\S(\fR(\A), \upsilon)\subset\S(\fA, \iota)$. Hence, $(\fA, \iota)\sim(\fR(\A), \upsilon)$ by Proposition \ref{P:toporder}.
\end{proof}

\subsection{Concrete Characterization} \label{SS:concrete}

We exhibit an explicit identification of the RFD-maximal $\rC^*$-cover. The maximal $\rC^*$-cover can be constructed in a concrete way and this is a common method of proving existence. We show that the RFD-maximal $\rC^*$-cover can be constructed through analogous methods. The proof is an adaptation of \cite[Proposition 2.4.2]{blecher2004operator}.

\begin{theorem}\label{T:concreterepn}
Let $\A$ be an RFD operator algebra. Then, there is an RFD $\rC^*$-cover $(\fQ, \iota)$ of $\A$ satisfying the following: for any representation $\rho:\A\rightarrow B(\H_\rho)$ on a finite-dimensional Hilbert space, there is a unique $*$-representation $\theta:\fQ\rightarrow B(\H_\rho)$ such that $\theta\circ\iota = \rho$. In particular, $(\fQ, \iota)$ is minimal among all $\rC^*$-covers of $\A$ which satisfy this property. Moreover, $(\fQ, \iota)\sim(\fR(\A), \upsilon)$.
\end{theorem}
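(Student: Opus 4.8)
The plan is to mimic the concrete construction of $\rC^*_{max}(\A)$ from \cite[Proposition 2.4.2]{blecher2004operator}, but using only finite-dimensional representations. First I would fix a set-theoretic bound: since $\A$ is a fixed operator algebra, any representation $\rho:\A\to B(\H_\rho)$ on a finite-dimensional Hilbert space is determined, up to unitary equivalence, by the images of a dense (in fact countable, if $\A$ is separable; in general of cardinality at most $|\A|$) subset of $\A$ inside some $\bM_{r}$. Hence there is a \emph{set} $\F$ of representative finite-dimensional representations $\rho:\A\to B(\H_\rho)$ such that every finite-dimensional representation of $\A$ is unitarily equivalent to one in $\F$. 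Form $\iota = \bigoplus_{\rho\in\F}\rho$ on $\H = \bigoplus_{\rho\in\F}\H_\rho$ and set $\fQ = \rC^*(\iota(\A))$.

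Next I would verify the three required properties. (1) \emph{$\iota$ is completely isometric and $\fQ$ is RFD.} Complete isometry is exactly the hypothesis that $\A$ is RFD: a completely norming family of finite-dimensional representations exists, each member is unitarily equivalent to one in $\F$, and unitary equivalence preserves the norm on each matrix level, so $\iota^{(n)}$ is isometric for all $n$. Since $\iota(\A) \subset \prod_{\rho\in\F} B(\H_\rho)$ with each $B(\H_\rho)$ finite-dimensional, $\fQ = \rC^*(\iota(\A)) \subset \prod_{\rho\in\F} B(\H_\rho)$ is a $\rC^*$-subalgebra of a product of finite-dimensional $\rC^*$-algebras, hence RFD. (2) \emph{Universal property.} Given any finite-dimensional representation $\rho:\A\to B(\H_\rho)$, pick $\rho'\in\F$ unitarily equivalent to $\rho$ via $U:\H_\rho\to\H_{\rho'}$; let $P_{\rho'}$ be the coordinate projection $\fQ\to B(\H_{\rho'})$ (which is a $*$-representation, well-defined on $\fQ$ because coordinate projection is weakly continuous / $*$-homomorphic on the product), and set $\theta = \operatorname{Ad}_{U^*}\circ P_{\rho'}$. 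Then $\theta\circ\iota = \rho$. Uniqueness of $\theta$ is automatic since $\iota(\A)$ generates $\fQ$ as a $\rC^*$-algebra and a $*$-representation is determined by its values on a generating set. (3) \emph{Minimality.} If $(\fA,j)$ is any $\rC^*$-cover of $\A$ with the same universal property, then in particular, for each $\rho\in\F$ there is a $*$-representation $\theta_\rho:\fA\to B(\H_\rho)$ with $\theta_\rho\circ j = \rho$; assembling $\theta = \bigoplus_{\rho\in\F}\theta_\rho$ gives a $*$-representation of $\fA$ with $\theta\circ j = \iota$, hence $\theta(\fA) = \theta(\rC^*(j(\A))) = \rC^*(\iota(\A)) = \fQ$, so $\theta:\fA\to\fQ$ is a surjective $*$-representation intertwining $j$ and $\iota$, i.e. $(\fQ,\iota)\preceq(\fA,j)$.

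Finally, for the identification $(\fQ,\iota)\sim(\fR(\A),\upsilon)$, I would invoke Corollary \ref{C:allFDfactor}. We have already shown $\fQ$ is RFD, so it remains to check that every equivalence class $[\sigma]\in\widehat{\rC^*_{max}(\A)}$ consisting of finite-dimensional $*$-representations lies in $\S(\fQ,\iota)$. Such a $\sigma$ factors as $\sigma = \sigma_0\circ q_{\rC^*_{max}(\A)}$ trivially ($q$ the identity here), and more to the point $\sigma\circ\mu$ is a finite-dimensional representation of $\A$, hence unitarily equivalent to some $\rho\in\F$; then by the universal property of $(\fQ,\iota)$ there is a $*$-representation $\theta:\fQ\to B(\H_\sigma)$ with $\theta\circ\iota = \sigma\circ\mu$, and composing with $q_\fQ:\rC^*_{max}(\A)\to\fQ$ recovers $\sigma$ on $\mu(\A)$, hence on all of $\rC^*_{max}(\A)$ by density; since $\sigma$ is irreducible this exhibits $[\sigma]\in\S(\fQ,\iota)$. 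Thus condition (ii) of Corollary \ref{C:allFDfactor} holds and $(\fQ,\iota)\sim(\fR(\A),\upsilon)$.

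\textbf{Main obstacle.} The genuinely delicate point is the set-theoretic one in the first paragraph: ensuring that $\F$ is an honest set rather than a proper class, so that the direct sum $\bigoplus_{\rho\in\F}\rho$ makes sense — this is precisely the subtlety handled in \cite[Proposition 2.4.2]{blecher2004operator} for $\rC^*_{max}$, and the same cardinality bookkeeping (a finite-dimensional representation is determined by a function from $\A$ into $\bigcup_n \bM_n$, so the unitary-equivalence classes form a set) applies verbatim. The rest is routine bookkeeping with universal properties, intertwiners, and weak continuity of coordinate projections.
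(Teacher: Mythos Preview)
Your proposal is correct and follows essentially the same route as the paper: construct $\fQ$ as the $\rC^*$-algebra generated by the direct sum of (representatives of) all finite-dimensional representations, verify the universal property via coordinate projections and unitary conjugation, prove minimality by bundling lifts into a single surjective $*$-homomorphism, and identify $(\fQ,\iota)$ with $(\fR(\A),\upsilon)$ through Corollary~\ref{C:allFDfactor}. The only cosmetic difference is that the paper sidesteps the set-theoretic bookkeeping you flag by taking $\F$ to be the set of \emph{all} representations $\rho:\A\to\bM_r$ for $r\in\bN$ (which is automatically a set, being contained in $\bigcup_r \bM_r^{\A}$), rather than choosing representatives of unitary equivalence classes; in the finite-dimensional setting this makes the ``main obstacle'' you identify essentially trivial.
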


\begin{proof}
Let $\F$ be the set of representations $\rho: \A\rightarrow\bM_r, r\in\bN$. As $\A$ is RFD, the map $\iota = \bigoplus_{\rho\in\F}\rho$ defines a completely isometric representation of $\A$ on a Hilbert space $\H=\bigoplus_{\rho\in\F}\bC^{r_\rho}$. Letting $\fQ = \rC^*(\iota(\A))$, we see that $\fQ$ is an RFD $\rC^*$-algebra and that $(\fQ, \iota)$ is a $\rC^*$-cover of $\A$.

Let $\rho:\A\rightarrow B(\H_\rho)$ denote a representation of $\A$ on a finite-dimensional Hilbert space. Then there is some $r_\chi\in\bN$ and a unitary operator $U:\bC^{r_\chi}\rightarrow\H_\rho$ such that $\chi = U^*\rho(\cdot) U\in\F$. Let $P_\chi$ denote the orthogonal projection of $\H$ onto $\bC^{r_\chi}$. By construction of $\fQ$, we obtain a $*$-representation $\pi:\fQ\rightarrow\bM_{r_\chi}$ defined by \[\pi(T) = P_\chi T\mid_{\bC^{r_\chi}}.\]Then, this dictates that \[\pi\circ\iota(a)  = \chi(a) = U^*\rho(a)U, \ \ \ \ \ \ a\in\A.\]Take $\theta = U\pi(\cdot)U^*:\fQ\rightarrow B(\H_\rho)$. Then $\theta$ is a finite-dimensional $*$-representation such that $\theta\circ\iota = \rho$.

For uniqueness of the map $\theta$, suppose that $\theta$ and $\tau$ are $*$-representations of $\fQ$ such that $\theta\circ\iota =\rho= \tau \circ\iota$. As $\tau$ and $\theta$ are $*$-representations which agree on $\iota(\A)$, we infer that $\tau$ and $\theta$ agree on the linear span of words in $\iota(\A)\cup\iota(\A)^*$. This latter set is dense within $\fQ$ and so $\theta = \tau$.

For minimality, suppose that $(\fA, j)$ is a $\rC^*$-cover of $\A$ such that every representation of $\A$ on a finite-dimensional Hilbert space lifts to $\fA$. For each $\rho\in\F$, we obtain a $*$-representation $\theta_\rho:\fA\rightarrow B(\H_\rho)$ such that $\theta_\rho\circ j = \rho$. In particular, we have that \[\iota = \bigoplus_{\rho\in\F}\rho = \bigoplus_{\rho\in\F}\theta_\rho\circ j.\]Then, $\Theta = \bigoplus_{\rho\in\F}\theta_\rho$ is a $*$-representation of $\fA$ such that $\Theta\circ j = \iota$. We see that \[\Theta(\fA) = \Theta(\rC^*(j(\A))) = \rC^*(\Theta\circ j(\A))  = \rC^*(\iota(\A)) = \fQ\]and so $(\fQ, \iota)\preceq(\fA, j)$ as desired.

Finally, we show that $(\fQ, \iota)\sim(\fR(\A), \upsilon)$. Since $\fQ$ is an RFD $\rC^*$-algebra, we show that the conditions in Corollary \ref{C:allFDfactor} hold. To this end, let $\pi: \rC^*_{max}(\A)\rightarrow B(\H_\pi)$ be a finite-dimensional irreducible $*$-representation. Then $\pi\circ\mu$ is a representation of $\A$ on a finite-dimensional Hilbert space. Hence, there exists a $*$-representation $\theta:\fQ\rightarrow B(\H_\pi)$ such that $\theta\circ\iota = \pi\circ\mu$. Then, $\theta\circ q_\fQ$ is a $*$-representation of $\rC^*_{max}(\A)$ satisfying \[\theta\circ q_\fQ\circ\mu = \theta\circ\iota = \pi\circ\mu.\]So $\theta\circ q_\fQ = \pi$ and we have that $[\pi]\in\S(\fQ, \iota)$ as desired.
\end{proof}

Theorem \ref{T:concreterepn} identifies a categorical property of the RFD-maximal $\rC^*$-cover. This property may be identified as a universal property of the RFD-maximal $\rC^*$-cover among the RFD $\rC^*$-covers of $\A$. However, it will only be a universal property of all $\rC^*$-covers of $\A$ if $\rC^*_{max}(\A)$ is RFD.

\begin{remark}\label{R:ConcreteRFDunital}
If $\A$ is a unital RFD operator algebra, then a slight modification to the proof of Theorem \ref{T:concreterepn} shows that the embedding $\iota$ can taken to be unital. In turn, the universal property of Theorem \ref{T:concreterepn} is equivalent to the following: for any unital completely contractive representation $\rho:\A\rightarrow B(\H_\rho)$ on a finite-dimensional Hilbert space, there is a unique (unital) $*$-representation $\theta:\fQ\rightarrow B(\H_\rho)$ such that $\theta\circ\iota=\rho.$ 
\end{remark}

\subsection{Preservation of Algebraic Properties}\label{SS:RFDmaxAlgProp}

We conclude this section by supplying a few instances in which the RFD-maximal $\rC^*$-cover is akin to the traditional maximal $\rC^*$-cover.

The maximal $\rC^*$-cover of an operator algebra is known to preserve certain algebraic constructions. See \cite[Proposition 2.2]{blecher1999modules}, \cite[2.4.3]{blecher2004operator}, \cite[Theorem 5.2]{clouatre2019residually}, \cite[Theorem 4.1]{duncan2004universal} for example. We provide supporting evidence for Question \ref{Q:main} to have an affirmative answer by showing that the RFD-maximal $\rC^*$-cover preserves a few of the same algebraic constructions (Theorems \ref{T:RFDmaxDirectSum} and \ref{T:freeproduct}). Indeed, for Question \ref{Q:main} to be true, the subsequent Theorems \ref{T:RFDmaxDirectSum} and \ref{T:freeproduct} have to hold. Withstanding this, the results herein provide useful facts to compute the RFD-maximal $\rC^*$-cover. First we address a minor albeit worthwhile point.

\begin{proposition}\label{P:RFDmaxUnit}
Let $\A$ be a non-unital RFD operator algebra and let $\widetilde{\A}$ denote the unitization of $\A$. Let $(\fR(\widetilde{\A}), \upsilon_1)$ be the RFD-maximal $\rC^*$-cover of $\widetilde{\A}$. If $\iota = \upsilon_1\mid_\A$ and $\fA = \rC^*(\iota(\A))$, then we have that $(\fA, \iota)\sim(\fR(\A), \upsilon).$
\end{proposition}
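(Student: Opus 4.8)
The plan is to show that $(\fA,\iota)$ satisfies the characterization of the RFD-maximal $\rC^*$-cover given in Corollary \ref{C:allFDfactor}, namely that $\fA$ is RFD and that every equivalence class of finite-dimensional irreducible $*$-representations of $\rC^*_{max}(\A)$ lies in $\S(\fA,\iota)$. The first point is immediate: $\fA = \rC^*(\iota(\A))$ is a $\rC^*$-subalgebra of $\fR(\widetilde{\A})$, and residual finite-dimensionality passes to $\rC^*$-subalgebras, so $\fA$ is RFD; one should also check $(\fA,\iota)$ is genuinely a $\rC^*$-cover, which follows because $\iota = \upsilon_1|_\A$ is completely isometric (being the restriction of the completely isometric $\upsilon_1$) and $\fA$ is generated by $\iota(\A)$ by definition.

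\textbf{Main step.} The substantive part is the factorization condition. Let $\pi:\rC^*_{max}(\A)\rightarrow B(\H_\pi)$ be a finite-dimensional irreducible $*$-representation, so $\rho := \pi\circ\mu$ is a representation of $\A$ on a finite-dimensional Hilbert space. The idea is to promote $\rho$ to a representation of $\widetilde{\A}$, lift it through $\fR(\widetilde{\A})$, and then restrict back. Concretely, by the material in Section \ref{S:prelim} on unitizations, $\rho$ extends uniquely to a unital representation $\rho^+:\widetilde{\A}\rightarrow B(\H_\pi)$. Since $\rho^+$ is a finite-dimensional representation of the RFD operator algebra $\widetilde{\A}$, Theorem \ref{T:concreterepn} (applied to $\widetilde{\A}$) yields a $*$-representation $\theta:\fR(\widetilde{\A})\rightarrow B(\H_\pi)$ with $\theta\circ\upsilon_1 = \rho^+$. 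Restricting to $\A$ and using $\iota = \upsilon_1|_\A$ gives $\theta\circ\iota = \rho^+|_\A = \rho = \pi\circ\mu$. Now $\theta|_\fA:\fA\rightarrow B(\H_\pi)$ is a $*$-representation of $\fA$ with $(\theta|_\fA)\circ\iota = \pi\circ\mu$; composing with $q_\fA:\rC^*_{max}(\A)\rightarrow\fA$ produces a $*$-representation $(\theta|_\fA)\circ q_\fA$ of $\rC^*_{max}(\A)$ that agrees with $\pi$ on $\mu(\A)$, hence agrees with $\pi$ on the dense span of words in $\mu(\A)\cup\mu(\A)^*$, hence equals $\pi$. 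Thus $\pi$ factors through $q_\fA$, so $[\pi]\in\S(\fA,\iota)$. By Corollary \ref{C:allFDfactor}, $(\fA,\iota)\sim(\fR(\A),\upsilon)$.

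\textbf{Anticipated obstacle.} The only delicate point is making sure the restriction-and-unitization maneuver is bookkept correctly: one must verify that $\fA = \rC^*(\iota(\A))$ is an ideal-free honest subalgebra of $\fR(\widetilde{\A})$ on which $\theta$ restricts to a $*$-representation, and that $q_\fA$ exists with the stated property (which is part of the definition of the maximal $\rC^*$-cover). A subtlety worth a sentence is that $\theta$ need not be unital on $\fR(\widetilde{\A})$ issues do not arise because we only ever restrict $\theta$, never demand surjectivity onto $B(\H_\pi)$; irreducibility of $\pi$ is used only to ensure $[\pi]$ makes sense as a point of $\widehat{\rC^*_{max}(\A)}$, and the argument in fact shows every finite-dimensional (not necessarily irreducible) representation of $\A$ factors through $q_\fA$. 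I do not expect any genuinely hard step; the proof is a routine diagram chase once the right characterization (Corollary \ref{C:allFDfactor}) and the unitization functoriality from the preliminaries are invoked.
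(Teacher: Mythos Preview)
Your proof is correct and follows essentially the same route as the paper's: both extend a finite-dimensional representation $\rho$ of $\A$ to its unitization $\rho^+$, lift through $\fR(\widetilde{\A})$ via Theorem~\ref{T:concreterepn}, and restrict the resulting $*$-representation to $\fA$. The only cosmetic difference is that the paper concludes directly from the universal-property characterization in Theorem~\ref{T:concreterepn} (an RFD cover through which every finite-dimensional representation lifts must be the RFD-maximal one), whereas you unpack this by checking the factorization condition of Corollary~\ref{C:allFDfactor} explicitly; the underlying argument is identical.
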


\begin{proof}
First note that $\fA$ is an RFD $\rC^*$-algebra. Let $\rho:\A\rightarrow B(\H)$ be a representation of $\A$ on a finite-dimensional Hilbert space. Let $\rho^+: \widetilde{\A}\rightarrow B(\H)$ be the unital representation which extends $\rho$. By Theorem \ref{T:concreterepn}, there is a $*$-representation $\theta:\fR(\widetilde{\A})\rightarrow B(\H)$ such that $\theta\circ\upsilon_1 = \rho^+$. Then $\sigma:=\theta\mid_\fA$ is a $*$-representation of $\fA$ such that $\sigma \circ\iota = \rho$. So $(\fA, \iota)\sim(\fR(\A), \upsilon)$ by Theorem \ref{T:concreterepn}.
\end{proof}

We proceed with a technical fact which will aid in our derivations for the remainder of this subsection. The following roughly states that if all finite-dimensional irreducible $*$-representations of an \emph{isomorphic} copy of the maximal $\rC^*$-cover factor through a fixed RFD $\rC^*$-cover, then this RFD $\rC^*$-cover is isomorphic to the RFD-maximal $\rC^*$-cover.

\begin{lemma}\label{L:fdfactorisomorphism}
Let $\A$ be an RFD operator algebra. Suppose $(\fR, \hat{\upsilon})$ and $(\fM, \hat{\mu})$ are $\rC^*$-covers of $\A$ where $(\fR, \hat{\upsilon})\preceq(\fM, \hat{\mu})$ and $\fM\cong\rC^*_{max}(\A)$. Let $q:\fM\rightarrow \fR$ be the surjective $*$-representation satisfying $q\circ\hat{\mu} = \hat{\upsilon}$. If every finite-dimensional $*$-representation $\pi$ of $\fM$ is of the form $\pi=\sigma\circ q$ for some $*$-representation $\sigma$ of $\fR$, then $\fR\cong\fR(\A)$.
\end{lemma}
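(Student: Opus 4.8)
\textbf{Proof proposal for Lemma \ref{L:fdfactorisomorphism}.}

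The plan is to show that $(\fR,\hat\upsilon)$ satisfies the hypotheses of Corollary \ref{C:allFDfactor}, once we transport everything along a fixed isomorphism $\Phi\colon\rC^*_{max}(\A)\to\fM$. Since $\fR$ is an RFD $\rC^*$-algebra by hypothesis, it remains only to verify the second clause of Corollary \ref{C:allFDfactor}: that every equivalence class $[\sigma]\in\widehat{\rC^*_{max}(\A)}$ consisting of finite-dimensional $*$-representations lies in $\S(\fR,\hat\upsilon)$. First I would pin down the isomorphism. The map $q\circ\Phi\colon\rC^*_{max}(\A)\to\fR$ is a surjective $*$-representation, and since $\Phi\circ\mu$ is a completely isometric representation of $\A$ generating $\fM$, the pair $(\fM,\Phi\circ\mu)$ is a $\rC^*$-cover of $\A$; I would like it to be equivalent to $(\fM,\hat\mu)$ so that $q_{\fM}$ (with respect to the $\rC^*$-cover structure $(\fM,\Phi\circ\mu)$) agrees with $\Phi$ up to the canonical identifications. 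This is where a small amount of care is needed: the hypothesis only says $\fM\cong\rC^*_{max}(\A)$ as $\rC^*$-algebras, and $\Phi\circ\mu$ need not equal $\hat\mu$. However, $(\fM,\Phi\circ\mu)$ is then a $\rC^*$-cover whose generating $\rC^*$-algebra is isomorphic to $\rC^*_{max}(\A)$; by Corollary \ref{C:RFDmaxiffmaxRFD} and the fact that the maximal $\rC^*$-cover is the unique maximal element of $\mathrm{Cov}(\A)$, the only $\rC^*$-cover (up to equivalence) whose generated $\rC^*$-algebra is $*$-isomorphic to $\rC^*_{max}(\A)$ is the maximal one, so $(\fM,\Phi\circ\mu)\sim(\rC^*_{max}(\A),\mu)$, and in fact we may as well \emph{choose} $\Phi$ so that $\Phi\circ\mu=\hat\mu$. (If one prefers not to invoke that uniqueness, one instead replaces $\Phi$ by $\Phi\circ\psi$ for the unique $*$-isomorphism $\psi$ witnessing $(\fM,\Phi\circ\mu)\sim(\fM,\hat\mu)$.) After this adjustment, $q\circ\Phi\circ\mu = q\circ\hat\mu = \hat\upsilon$, so by uniqueness of the canonical quotient map we get $q_{\fR}=q\circ\Phi$, where $q_{\fR}\colon\rC^*_{max}(\A)\to\fR$ is the structure map of the $\rC^*$-cover $(\fR,\hat\upsilon)$.

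Next I would run the finite-dimensional factoring argument. Let $\pi\colon\rC^*_{max}(\A)\to B(\H_\pi)$ be a finite-dimensional irreducible $*$-representation, so that $[\pi]\in\widehat{\rC^*_{max}(\A)}$. Then $\pi\circ\Phi^{-1}$ is a finite-dimensional $*$-representation of $\fM$, hence by hypothesis there is a $*$-representation $\sigma$ of $\fR$ with $\pi\circ\Phi^{-1}=\sigma\circ q$. Composing on the right with $\Phi$ gives $\pi=\sigma\circ q\circ\Phi=\sigma\circ q_{\fR}$. Since $\pi$ is irreducible and factors through $q_{\fR}$, the corestriction of $\sigma$ to $\overline{q_{\fR}(\rC^*_{max}(\A))}=\fR$ acting on the range of $\pi$ is irreducible, so after passing to this subrepresentation we may take $\sigma$ irreducible; thus $[\pi]\in\S(\fR,\hat\upsilon)$ by definition of $\S(\fR,\hat\upsilon)$. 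This establishes clause (ii) of Corollary \ref{C:allFDfactor}, whence $(\fR,\hat\upsilon)\sim(\fR(\A),\upsilon)$, and in particular $\fR\cong\fR(\A)$.

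I expect the only genuine obstacle to be the bookkeeping in the first paragraph: one must be scrupulous about the distinction between an abstract $\rC^*$-algebra isomorphism $\fM\cong\rC^*_{max}(\A)$ and an equivalence of $\rC^*$-covers, and make sure that the isomorphism can be normalized so that the structure maps compose correctly ($q_{\fR}=q\circ\Phi$). Everything after that is a routine ``factor-through-the-quotient'' computation together with the observation that a finite-dimensional representation may be cut down to an irreducible subrepresentation without leaving the set of finite-dimensional representations. I would also note that irreducibility is preserved under the homeomorphism of Theorem \ref{T:DiximierTopology}(iii), which is implicitly what lets us identify finite-dimensional irreducibles of $\fR$ with points of $\S(\fR,\hat\upsilon)$, exactly as in the proof of Theorem \ref{T:topRFDmax}(i).
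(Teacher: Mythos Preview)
Your approach via Corollary \ref{C:allFDfactor} is natural, but the step you yourself flagged as the ``only genuine obstacle'' is a real gap. The assertion that the isomorphism $\Phi\colon\rC^*_{max}(\A)\to\fM$ can be chosen with $\Phi\circ\mu=\hat\mu$ is equivalent to $(\fM,\hat\mu)\sim(\rC^*_{max}(\A),\mu)$ as $\rC^*$-covers, and this does \emph{not} follow from the bare hypothesis $\fM\cong\rC^*_{max}(\A)$. Your justification --- that ``the only $\rC^*$-cover (up to equivalence) whose generated $\rC^*$-algebra is $*$-isomorphic to $\rC^*_{max}(\A)$ is the maximal one'' --- would require every surjective $*$-endomorphism of $\rC^*_{max}(\A)$ to be an automorphism (a Hopfian property), which is not available. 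Corollary \ref{C:RFDmaxiffmaxRFD} says nothing about this, and your parenthetical alternative presupposes exactly the equivalence $(\fM,\Phi\circ\mu)\sim(\fM,\hat\mu)$ that is in question. Without $\Phi\circ\mu=\hat\mu$ you cannot conclude $q_\fR=q\circ\Phi$, and so you cannot place the finite-dimensional irreducibles of $\rC^*_{max}(\A)$ inside $\S(\fR,\hat\upsilon)$.

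The paper circumvents this by proving only the $\rC^*$-algebra isomorphism $\fR\cong\fR(\A)$ (which is all the lemma claims), never attempting to match the cover structures. Using that $\fR$ is RFD (an implicit hypothesis, also used in your argument), it identifies $\fR$ with the image of the direct sum of all finite-dimensional irreducibles of $\fM$; the factoring hypothesis on $q$ shows these exhaust the finite-dimensional irreducibles of $\fR$. An \emph{arbitrary} $*$-isomorphism $\chi\colon\rC^*_{max}(\A)\to\fM$ then transports this direct sum to the direct sum of all finite-dimensional irreducibles of $\rC^*_{max}(\A)$, whose image is $\fR(\A)$ by Theorem \ref{T:topRFDmax}. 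No compatibility between $\chi$ and the embeddings $\mu,\hat\mu$ is ever needed. Your argument would go through if the hypothesis were strengthened to $(\fM,\hat\mu)\sim(\rC^*_{max}(\A),\mu)$ --- and indeed this stronger hypothesis is what is actually verified in the two applications (Theorems \ref{T:RFDmaxDirectSum} and \ref{T:freeproduct}) --- but as the lemma is stated, the paper's route is required.
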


\begin{proof}
Let $\F_{max}\subset\widehat{\rC^*_{max}(\A)}$ and $\F_\fM\subset\widehat{\fM}$ denote the unitary equivalence classes of finite-dimensional $*$-representations in the respective spectra. Let $\chi: \rC^*_{max}(\A)\rightarrow\fM$ be a $*$-isomorphism. Then the mapping \[\widehat{\fM}\rightarrow\widehat{\rC^*_{max}(\A)}, \ \ \ \ [\beta]\mapsto[\beta\circ\chi],\]is a homeomorphism which maps $\F_\fM$ bijectively onto $\F_{max}$. Let $\F_\fM'$ be a set of $*$-representations of $\fM$ such that $\F_\fM = \{ [\beta]: \beta\in\F_\fM'\}$. Then $\F_{max}' = \{\beta\circ\chi: \beta\in\F_\fM'\}$ is a complete system of representatives for $\F_{max}$.

For each $\beta\in\F_\fM'$, there is a $*$-representation $\beta_\fR$ of $\fR$ satisfying $\beta=\beta_\fR\circ q$. Conversely, if $\sigma$ is a finite-dimensional irreducible $*$-representation of $\fR$, then $\sigma\circ q$ is unitarily equivalent to an element of $\F_\fM'$. Whence, $\sigma$ is unitarily equivalent to $\beta_\fR$ for some $\beta\in\F_\fM'$. So $\{ [\beta_\fR]\in\widehat{\fR} : \beta\in\F_\fM'\}$ is the collection of all unitary equivalence classes of finite-dimensional irreducible $*$-representations of $\fR$. As $\fR$ is an RFD $\rC^*$-algebra, \[\fR\cong \left(\bigoplus_{\beta\in\F_\fM'}\beta_\fR\right)(\fR) = \left(\bigoplus_{\beta\in\F_\fM'}\beta\right)(\fM).\]

For each $\beta\in\F_\fM'$, we have that $[\beta\circ\chi]\in\F_{max}$. Conversely, if $\sigma$ is a finite-dimensional irreducible $*$-representation of $\rC^*_{max}(\A)$, then $\sigma$ is unitarily equivalent to $\beta\circ\chi$ for some $\beta\in\F_\fM'$. By Theorem \ref{T:RFDdensity} {\rm (ii)}, the collection of unitary equivalence classes of irreducible finite-dimensional $*$-representations in $\widehat{\fR(\A)}$ is dense. Then, by Theorem \ref{T:DiximierTopology} {\rm (iii)}, we see that the homeomorphism $\S(\fR(\A), \upsilon)\rightarrow \widehat{\fR(\A)}$ maps equivalence classes of finite-dimensional $*$-representations bijectively onto themselves. In turn, we may infer that\[\fR(\A)\cong \left(\bigoplus_{\theta\in\F_{max}'}\theta\right)(\rC^*_{max}(\A)).\]Then,\[\left(\bigoplus_{\theta\in\F_{max}'}\theta\right)(\rC^*_{max}(\A)) = \left(\bigoplus_{\beta\in\F_\fM'}\beta\circ\chi\right)(\rC^*_{max}(\A)) = \left(\bigoplus_{\beta\in\F_\fM'}\beta\right)(\fM)\cong\fR.\]
\end{proof}

Our first consequence will pertain countable direct sums of operator algebras. For unital operator algebras $\A_n, n\in\bN$, define \[\A = \bigoplus_{n=1}^\infty\A_n = \left\{(a_n)\in\prod_{n=1}^\infty\A_n : \lim_{n\rightarrow\infty}\lVert a_n\rVert = 0\right\}.\]In \cite[Theorem 5.2]{clouatre2019residually}, it was shown that\[\rC^*_{max}(\A)\cong\bigoplus_{n=1}^\infty\rC^*_{max}(\A_n).\]We provide supporting evidence for Question \ref{Q:main} to have an affirmative answer by showing that the RFD-maximal $\rC^*$-cover also satisfies this property.

\begin{theorem}\label{T:RFDmaxDirectSum}
For each $n\in\bN$, let $\A_n$ be a unital RFD operator algebra and let $\A = \bigoplus_{n=1}^\infty\A_n$. Then, $\fR(\A)\cong\bigoplus_{n=1}^\infty\fR(\A_n).$
\end{theorem}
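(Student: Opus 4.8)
The plan is to verify the hypotheses of Lemma \ref{L:fdfactorisomorphism} with $\fM = \bigoplus_{n=1}^\infty \rC^*_{max}(\A_n)$, which is a representative of the maximal $\rC^*$-cover of $\A$ by \cite[Theorem 5.2]{clouatre2019residually}, and with $\fR = \bigoplus_{n=1}^\infty \fR(\A_n)$. First I would set up the natural $\rC^*$-cover structure on $\fR$: each $\upsilon_n:\A_n\to\fR(\A_n)$ assembles into a map $\hat\upsilon:\A\to\fR$ sending $(a_n)$ to $(\upsilon_n(a_n))$; this is completely isometric (the norms on $\A$ and $\fR$ are computed coordinatewise, with the $c_0$-condition preserved since $\lVert\upsilon_n(a_n)\rVert = \lVert a_n\rVert\to 0$), and $\rC^*(\hat\upsilon(\A)) = \bigoplus_{n=1}^\infty \rC^*(\upsilon_n(\A_n)) = \bigoplus_{n=1}^\infty \fR(\A_n) = \fR$. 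The $\rC^*$-algebra $\fR$ is RFD because each $\fR(\A_n)$ is RFD and a $c_0$-direct sum of RFD $\rC^*$-algebras is RFD (it embeds in the full product). Likewise there is the coordinatewise embedding $\hat\mu:\A\to\fM$, and the coordinatewise direct sum of the maps $q_n:\rC^*_{max}(\A_n)\to\fR(\A_n)$ gives a surjective $*$-representation $q:\fM\to\fR$ with $q\circ\hat\mu = \hat\upsilon$, so $(\fR,\hat\upsilon)\preceq(\fM,\hat\mu)$.

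The crux is then to show that every finite-dimensional $*$-representation $\pi$ of $\fM = \bigoplus_{n=1}^\infty \rC^*_{max}(\A_n)$ factors through $q$. Here I would use the standard structure theory of $c_0$-direct sums: an irreducible $*$-representation of $\fM$ is (up to unitary equivalence) of the form $\pi_{n_0}\circ p_{n_0}$ for a single index $n_0$, where $p_{n_0}:\fM\to\rC^*_{max}(\A_{n_0})$ is the coordinate projection and $\pi_{n_0}$ is an irreducible $*$-representation of $\rC^*_{max}(\A_{n_0})$; a general finite-dimensional $*$-representation is a finite direct sum of such, hence supported on finitely many coordinates $n_1,\dots,n_k$. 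If $\pi$ is finite-dimensional, then each $\pi_{n_j}$ is a finite-dimensional $*$-representation of $\rC^*_{max}(\A_{n_j})$; since $\A_{n_j}$ is RFD, Corollary \ref{C:allFDfactor} (equivalently Theorem \ref{T:topRFDmax} (i)) shows that $[\pi_{n_j}]\in\S(\fR(\A_{n_j}),\upsilon_{n_j})$, i.e. $\pi_{n_j} = \sigma_{n_j}\circ q_{n_j}$ for some $*$-representation $\sigma_{n_j}$ of $\fR(\A_{n_j})$. Assembling, $\pi$ factors as $\sigma\circ q$ where $\sigma$ is built from the $\sigma_{n_j}$ precomposed with the coordinate projections $\fR\to\fR(\A_{n_j})$. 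This verifies the hypothesis of Lemma \ref{L:fdfactorisomorphism}, and the lemma yields $\fR\cong\fR(\A)$, i.e. $\fR(\A)\cong\bigoplus_{n=1}^\infty\fR(\A_n)$.

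The main obstacle I anticipate is the bookkeeping around irreducible $*$-representations of the $c_0$-direct sum $\fM$ — in particular confirming that every finite-dimensional one is supported on finitely many summands and is a direct sum of representations pulled back from the individual coordinates. This is where one must be careful: the summands $\rC^*_{max}(\A_n)$ need not be unital, so one should argue via the fact that $\bigoplus_{n}\fK_n$-type ideals or approximate units force an irreducible representation of a $c_0$-sum to be nondegenerate on exactly one summand (using that orthogonal ideals act orthogonally and an irreducible representation is a pure state GNS construction), and then a finite-dimensional representation, being a finite sum of irreducibles, meets only finitely many summands. Once that reduction is in hand, everything else is a routine application of the already-established machinery (Lemma \ref{L:fdfactorisomorphism}, Corollary \ref{C:allFDfactor}, Theorem \ref{T:topRFDmax}). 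A secondary point to check is that the coordinatewise map $\hat\upsilon$ really is completely isometric and generates all of $\fR$, but this is immediate from the matrix norm on $\bigoplus\A_n$ being the supremum of the coordinate matrix norms together with the $c_0$ decay condition.
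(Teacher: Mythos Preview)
Your proposal is correct and follows essentially the same route as the paper: set up $\fM=\bigoplus_n\rC^*_{max}(\A_n)$ and $\fR=\bigoplus_n\fR(\A_n)$ with the coordinatewise embeddings and quotient, then verify the hypothesis of Lemma \ref{L:fdfactorisomorphism}. The only cosmetic difference is that the paper, instead of invoking the classification of irreducibles of a $c_0$-sum, shows directly that any finite-dimensional irreducible $\pi$ vanishes on $\ker q=\bigoplus_n\ker q_n$ by restricting to each ideal summand and using a truncation approximation; also, your concern about non-unital summands is unnecessary here, since each $\A_n$ is unital and hence so is $\rC^*_{max}(\A_n)$.
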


\begin{proof}
For each $n\in\bN$, let $\mu_n$ and $\upsilon_n$ denote the completely isometric representations of $\A_n$ into $\rC^*_{max}(\A_n)$ and $\fR(\A_n)$, respectively. Moreover, let $q_n:\rC^*_{max}(\A_n)\rightarrow\fR(\A_n)$ denote the surjective $*$-representation satisfying $q_n\circ\mu_n=\upsilon_n$. Let \[\fR = \bigoplus_{n=1}^\infty \fR(\A_n), \ \ \ \ \ \fM = \bigoplus_{n=1}^\infty \rC^*_{max}(\A_n).\]For $n\in\bN$, let $\iota_n:\rC^*_{max}(\A_n)\rightarrow \fM$ denote the obvious embedding. We verify the conditions of Lemma \ref{L:fdfactorisomorphism}.

First note that $\fR$ is an RFD $\rC^*$-algebra. Define completely isometric representations of $\A$ by \[\hat{\upsilon}: (a_n)_{n\geq1}\mapsto (\upsilon_n(a_n))_{n\geq1}, \ \ \ \ \ \hat{\mu}: (a_n)_{n\geq1}\mapsto(\mu_n(a_n))_{n\geq1}.\]We show that $\rC^*(\hat{\mu}(\A))=\fM$. It is clear that $\rC^*(\hat{\mu}(\A))\subset\fM$. For the reverse inclusion, note that \[\iota_n(\rC^*_{max}(\A_n))\subset\rC^*(\hat{\mu}(\A)), \ \ \ \ \ n\in\bN.\]Upon taking finite sums, \[\bigoplus_{n=1}^m \rC^*_{max}(\A_n)\oplus0\subset \rC^*(\hat{\mu}(\A)), \ \ \ \ \ m\in\bN.\]For $t = (t_n)_{n\geq1}\in\fM$, we let \[s^{(m)}= (t_1, t_2, \ldots, t_m, 0, 0, \ldots)\in \rC^*(\hat{\mu}(\A)), \ \ \ \ \ m\in\bN.\]As $\lVert t_n\rVert$ tends to $0$, it follows that $s^{(m)}$ converges to $t$ in the norm topology of $\fM$. Whence, we have that $t\in\rC^*(\hat{\mu}(\A))$. Therefore $\fM = \rC^*(\hat{\mu}(\A))$. Similarly, we may establish that $\fR = \rC^*(\hat{\upsilon}(\A))$.

Define a $*$-representation $Q: \fM\rightarrow\fR$ by $Q((t_n)) = (q_n(t_n)).$ Note that $Q\circ\hat{\mu} = \hat{\upsilon}$. In particular, $Q$ is surjective as \[Q(\fM) = Q(\rC^*(\hat{\mu}(\A))) = \rC^*(Q\circ\hat{\mu}(\A)) = \rC^*(\hat{\upsilon}(\A)) = \fR.\]Therefore $(\fR, \hat{\upsilon})$ and $(\fM, \hat{\mu})$ are $\rC^*$-covers for $\A$ such that $(\fR, \hat{\upsilon})\preceq(\fM, \hat{\mu})$.

Let $\pi:\fM\rightarrow B(\H_\pi)$ be an irreducible finite-dimensional $*$-representation. Let $n\in\bN$ and note that the $\rC^*$-algebra $\iota_n(\rC^*_{max}(\A_n))$ is an ideal of $\fM$. Hence, the representation $\pi\mid_{\iota_n(\rC^*_{max}(\A_n))}$ is either identically zero or irreducible (Subsection \ref{SS:topology}). In particular, $\pi\circ\iota_n$ is either identically zero or an irreducible $*$-representation of $\rC^*_{max}(\A_n)$. In the latter case, Theorem \ref{T:topRFDmax} yields that $\pi\circ\iota_n = \sigma_n\circ q_n$ where $\sigma_n$ is a finite-dimensional irreducible $*$-representation of $\fR(\A_n)$. In the former case, one can take $\sigma_n$ to be the zero representation in order to establish that $\pi\circ\iota_n = \sigma_n\circ q_n$.

For each $n\in\bN$, let $\fJ_n = \ker q_n\subset \rC^*_{max}(\A_n)$. Then, $\fJ = \bigoplus_{n=1}^\infty \fJ_n=\ker Q$ is a closed two-sided ideal of $\fM$. Let $t = (t_n)_{n\geq 1}\in\fJ$ and \[s^{(m)} = (t_1, t_2, \ldots, t_m, 0, 0,\ldots), \ \ \ \ \ m\in\bN.\]Then \begin{align*}
\pi(t) & = \lim_{m\rightarrow\infty}\pi(s^{(m)})\\
& = \lim_{m\rightarrow\infty}\sum_{n=1}^m\pi\circ\iota_n(t_n)\\
& = \lim_{m\rightarrow\infty}\sum_{n=1}^m\sigma_n\circ q_n(t_n)\\
& = 0
\end{align*}as $t_n\in\fJ_n = \ker q_n$ for each $n\in\bN$. So $\pi$ vanishes on $\fJ$. Whence, $\pi=\sigma\circ Q$ where $\sigma$ is a finite-dimensional $*$-representation of $\fR$. By \cite[Theorem 5.2]{clouatre2019residually}, we have that $\rC^*_{max}(\A)\cong\fM$. Hence, Lemma \ref{L:fdfactorisomorphism} yields that $\fR\cong\fR(\A)$.
\end{proof}

Now we provide a similar statement for the unital free product of operator algebras. We recall the relevant details: Let $\A, \B$ be unital operator algebras. The \emph{free product} of $\A$ and $\B$, denoted $\A*\B$, is the unique operator algebra satisfying the following universal property:\begin{enumerate}[{\rm (i)}]
\item there exist unital completely isometric representations $\iota:\A\rightarrow\A*\B$ and $j:\B\rightarrow\A*\B$ such that $\iota(\A), j(\B)$ generate $\A*\B$ and if $\pi:\A\rightarrow\C$ and $\theta:\B\rightarrow\C$ are representations into a common operator algebra $\C$, then there exists a unique representation $\pi*\theta:\A*\B\rightarrow\C$ such that $(\pi*\theta)\circ\iota = \pi$ and $(\pi*\theta)\circ j = \theta$.
\end{enumerate}In the self-adjoint setting, this coincides with the usual notion of free products for $\rC^*$-algebras. In \cite[Theorem 4.1]{blecher1991explicit}, it was shown that such an object exists for any pair of operator algebras and that the norm of $X\in\bM_n(\A*\B)$ may be defined by\begin{equation}\lVert X\rVert = \inf\{ \lVert X_1\rVert \lVert X_2\rVert\ldots\lVert X_k\rVert\}.\end{equation}The infimum is over all expressions $X= X_1 X_2\ldots X_k$ where $X_1$ is an $n\times m_1$ matrix in $\iota(\A)$, $X_2$ is an $m_1\times m_2$ matrix in $j(\B)$, $X_3$ is an $m_2\times m_3$ matrix in $\iota(\A)$, etc.

\begin{theorem}\label{T:freeproduct}
Let $\A,\B$ be unital RFD operator algebras. Then, $\fR(\A*\B)\cong\fR(\A)*\fR(\B)$.
\end{theorem}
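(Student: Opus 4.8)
The plan is to reproduce the architecture of the proof of Theorem~\ref{T:RFDmaxDirectSum}, with the direct sums replaced by unital free products. Write $\fM = \rC^*_{max}(\A) * \rC^*_{max}(\B)$ and $\fR = \fR(\A) * \fR(\B)$ for the corresponding ($\bC$\nobreakdash-amalgamated) $\rC^*$-algebraic free products. Two external inputs are used: first, that $\rC^*_{max}(\A*\B)\cong\fM$, which follows from the universal property of the free product of operator algebras together with that of the maximal $\rC^*$-cover (see \cite[Theorem~4.1]{duncan2004universal}); and second, that the full free product of two RFD $\rC^*$-algebras is again RFD, a theorem of Exel and Loring \cite{exel1992finite}. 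In particular $\fR$ is an RFD $\rC^*$-algebra, and $\A*\B$ is RFD, so that $\fR(\A*\B)$ is defined. The goal is then to verify the hypotheses of Lemma~\ref{L:fdfactorisomorphism} with $\hat{\mu}$ and $\hat{\upsilon}$ the canonical embeddings of $\A*\B$ into $\fM$ and $\fR$.

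First I would assemble these embeddings. Let $\mu_\A,\mu_\B,\upsilon_\A,\upsilon_\B$ denote the canonical completely isometric embeddings of $\A,\B$ into $\rC^*_{max}(\A),\rC^*_{max}(\B),\fR(\A),\fR(\B)$; composing with the canonical inclusions into the free products and applying the universal property of $\A*\B$ produces representations $\hat{\mu}:\A*\B\to\fM$ and $\hat{\upsilon}:\A*\B\to\fR$. By the first input, $(\fM,\hat{\mu})$ is equivalent to the maximal $\rC^*$-cover of $\A*\B$, hence a $\rC^*$-cover. For $(\fR,\hat{\upsilon})$: complete contractivity is automatic, $\rC^*(\hat{\upsilon}(\A*\B))=\fR$ because $\upsilon_\A(\A),\upsilon_\B(\B)$ generate $\fR(\A),\fR(\B)$ and these generate $\fR$, and complete isometry is seen by combining the norm formula of \cite[Theorem~4.1]{blecher1991explicit} with two facts: every finite-dimensional representation $\sigma$ of $\A*\B$ factors as $\sigma=(\sigma\circ\iota)*(\sigma\circ j)$ with finite-dimensional factors that lift through $\fR(\A)$ and $\fR(\B)$ by Theorem~\ref{T:concreterepn}, hence through $\fR$; and that $\A*\B$ is RFD, so its finite-dimensional representations are completely norming. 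Applying functoriality of the $\rC^*$-algebraic free product to the quotient maps $q_\A:\rC^*_{max}(\A)\to\fR(\A)$ and $q_\B:\rC^*_{max}(\B)\to\fR(\B)$ gives a $*$-representation $Q:\fM\to\fR$ with $Q\circ\hat{\mu}=\hat{\upsilon}$; it is surjective since $Q(\fM)=\rC^*(Q\circ\hat{\mu}(\A*\B))=\fR$, so $(\fR,\hat{\upsilon})\preceq(\fM,\hat{\mu})$.

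The core of the argument is to check that every finite-dimensional $*$-representation $\pi$ of $\fM$ factors through $Q$. Restricting $\pi$ along the canonical unital inclusions $\rC^*_{max}(\A)\hookrightarrow\fM$ and $\rC^*_{max}(\B)\hookrightarrow\fM$ yields finite-dimensional $*$-representations $\pi_\A,\pi_\B$. Since $\ker q_\A$ and $\ker q_\B$ admit no finite-dimensional $*$-representations (Theorem~\ref{T:topRFDmax}~(ii)) and any nonzero finite-dimensional $*$-representation of a $\rC^*$-algebra decomposes into nonzero finite-dimensional irreducibles, the restrictions $\pi_\A|_{\ker q_\A}$ and $\pi_\B|_{\ker q_\B}$ must vanish; hence $\pi_\A=\sigma_\A\circ q_\A$ and $\pi_\B=\sigma_\B\circ q_\B$ for finite-dimensional $*$-representations $\sigma_\A$ of $\fR(\A)$ and $\sigma_\B$ of $\fR(\B)$ on the common space $\H_\pi$. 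Forming $\sigma=\sigma_\A*\sigma_\B:\fR\to B(\H_\pi)$ via the universal property of the $\rC^*$-algebraic free product and checking on the two generating subalgebras gives $\sigma\circ Q=\pi$. With this verified, Lemma~\ref{L:fdfactorisomorphism} (with $\fM\cong\rC^*_{max}(\A*\B)$) yields $\fR\cong\fR(\A*\B)$, which is the claim.

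The step I expect to be the main obstacle is confirming that $(\fR,\hat{\upsilon})$ is genuinely a $\rC^*$-cover, i.e.\ that $\hat{\upsilon}$ is completely isometric: unlike the direct-sum setting of Theorem~\ref{T:RFDmaxDirectSum}, where the norm on $\bigoplus\A_n$ is transparent, here one must work with the free-product norm formula and, in effect, with the fact that $\A*\B$ is RFD with finite-dimensional representations given precisely by free products of finite-dimensional representations of $\A$ and $\B$ on a common Hilbert space. Once that is in place, the remaining steps are routine bookkeeping with universal properties, parallel to the direct-sum case.
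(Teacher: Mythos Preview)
Your proposal is correct and follows essentially the same architecture as the paper's proof: both verify the hypotheses of Lemma~\ref{L:fdfactorisomorphism} using the Exel--Loring theorem for $\fR(\A)*\fR(\B)$, the identification $\rC^*_{max}(\A*\B)\cong\rC^*_{max}(\A)*\rC^*_{max}(\B)$, and the factoring of finite-dimensional $*$-representations of $\fM$ through $Q$ via Theorem~\ref{T:topRFDmax}. The one place you diverge is the step you flag as the main obstacle: the paper handles complete isometry of $\hat{\upsilon}$ more directly, simply observing that since $\hat{\upsilon}\circ\iota_\A$ and $\hat{\upsilon}\circ\iota_\B$ are completely isometric, the norm formula (equation~(2)) immediately forces $\hat{\upsilon}$ to be completely isometric --- no appeal to RFD-ness of $\A*\B$ or to lifting finite-dimensional representations is needed at that stage.
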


\begin{proof}
Let $(\rC^*_{max}(\A), \mu_\A)$ and $(\fR(\A), \upsilon_\A)$ denote the maximal and RFD-maximal $\rC^*$-covers of $\A$, respectively. Let $q_\A: \rC^*_{max}(\A)\rightarrow\fR(\A)$ be the surjective unital $*$-representation satisfying $q_\A\circ\mu_\A=\upsilon_\A$. Let $r_\A: \fR(\A)\rightarrow \fR(\A)*\fR(\B)$ and $m_\A:\rC^*_{max}(\A)\rightarrow\rC^*_{max}(\A)*\rC^*_{max}(\B)$ be the usual unital isometric $*$-representations. Let $\iota_\A:\A\rightarrow \A*\B$ be the unital completely isometric map given in {\rm (i)}. Define representations of $\B$ and $*$-representations of $\rC^*_{max}(\B)$ and $\fR(\B)$, denoted $\mu_\B, \upsilon_\B, q_\B, r_\B, m_\B$ and $\iota_\B$, in an analogous way.

We check the conditions of Lemma \ref{L:fdfactorisomorphism}. First note that $\fR(\A)*\fR(\B)$ is an RFD $\rC^*$-algebra by \cite[Theorem 3.2]{exel1992finite} and that $\rC^*_{max}(\A*\B)\cong \rC^*_{max}(\A)*\rC^*_{max}(\B)$ by \cite[Proposition 2.2]{blecher1999modules}. Take \[q=(r_\A\circ q_\A)*(r_\B\circ q_\B): \rC^*_{max}(\A)*\rC^*_{max}(\B)\rightarrow \fR(\A)*\fR(\B).\] Then $q$ is a surjective unital $*$-representation. Indeed, as $q_\A$ and $q_\B$ are surjective, we have that \[r_\A(\fR(\A))\cup r_\B(\fR(\B ))\subset q(\rC^*_{max}(\A)*\rC^*_{max}(\B)).\]The former set generates $\fR(\A)*\fR(\B)$ and so $q$ is surjective.

Define a representation \[\mu_*=(m_\A\circ\mu_\A)*(m_\B\circ\mu_\B):\A*\B\rightarrow\rC^*_{max}(\A)*\rC^*_{max}(\B).\]Note that $\mu_*\circ\iota_\A$ and $\mu_*\circ\iota_\B$ are completely isometric. By virtue of equation (2), $\mu_*$ is easily seen to be completely isometric. Similarly, the map\[\upsilon_* = (r_\A\circ\upsilon_\A)*(r_\B\circ\upsilon_\B):\A*\B\rightarrow\fR(\A)*\fR(\B)\]is also a completely isometric representation. Next, note that $\rC^*(\mu_*(\A*\B)) = \rC^*_{max}(\A)*\rC^*_{max}(\B)$: Indeed, by definition of $\mu_*$, we see that \[\rC^*(\mu_*\circ \iota_\A(\A)) = \rC^*(m_\A\circ\mu_\A(\A)) = m_\A(\rC^*_{max}(\A)) \subset \rC^*(\mu_*(\A*\B)).\]Similarly, we obtain that $m_\B(\rC^*_{max}(\B))\subset\rC^*(\mu_*(\A*\B))$. Therefore $$\rC^*(\mu_*(\A*\B))=\rC^*_{max}(\A)*\rC^*_{max}(\B).$$Observe that \[q\circ\mu_*\circ \iota_\A = q\circ m_\A\circ\mu_\A= r_\A\circ q_\A\circ \mu_\A = r_\A\circ\upsilon_\A.\]Similarly, we obtain $q\circ\mu_*\circ \iota_\B= r_\B\circ\upsilon_\B.$ By the universal property of the free product, we have that $q\circ\mu_* = \upsilon_*$. Whence, \[\rC^*(\upsilon_*(\A*\B)) = q(\rC^*(\mu_*(\A*\B))) = q(\rC^*_{max}(\A)*\rC^*_{max}(\B)) = \fR(\A)*\fR(\B).\]So $(\fR(\A)*\fR(\B), \upsilon_*)\preceq(\rC^*_{max}(\A)*\rC^*_{max}(\B), \mu_*)$.

Let $\pi: \rC^*_{max}(\A)*\rC^*_{max}(\B)\rightarrow B(\H_\pi)$ be a finite-dimensional irreducible $*$-representation. Then the universal property of the free product dictates that $\pi=\sigma_\A*\sigma_\B$ where $\sigma_\A: \rC^*_{max}(\A)\rightarrow B(\H_\pi)$ and $\sigma_\B: \rC^*_{max}(\B)\rightarrow B(\H_\pi)$ are $*$-representations. By Theorem \ref{T:topRFDmax} {\rm (i)}, there are $*$-representations $\hat{\sigma}_\A:\fR(\A)\rightarrow B(\H_\pi)$ and $\hat{\sigma}_\B:\fR(\B)\rightarrow B(\H_\pi)$ such that $\hat{\sigma}_\A\circ q_\A = \sigma_\A$ and $\hat{\sigma}_\B\circ q_\B =\sigma_\B$. Define a $*$-representation of $\fR(\A)*\fR(\B)$ by  $\Pi = \hat{\sigma}_\A*\hat{\sigma}_\B$. Observe that \[\Pi\circ q \circ m_\A = \Pi\circ r_\A\circ q_\A = \hat{\sigma}_\A\circ q_\A = \sigma_\A\]and similarly, $\Pi\circ q\circ m_\B = \sigma_\B$. So $\Pi\circ q = \pi$. Therefore, by Lemma \ref{L:fdfactorisomorphism}, we obtain that $\fR(\A*\B)\cong \fR(\A)*\fR(\B)$.
\end{proof}

\section{Residually Finite-Dimensional Representations} \label{S:RFDrepns}

In this section, we pursue a finer characterization of the RFD-maximal $\rC^*$-cover. Indeed, Theorem \ref{T:topRFDmax} only characterized the RFD-maximal $\rC^*$-cover up to a dense subset of representations. Now we uncover a wider class of representations by taking appropriate pointwise limits of representations. This is accomplished by analysing the so-called RFD and $*$-RFD representations of an operator algebra. Motivated by Question \ref{Q:main}, we consider the intermediate statement of whether these two classes of representations coincide.

The following two definitions will be central to our arguments. Let $\rho:\A\rightarrow B(\H)$ be a representation of an operator algebra $\A$. Then, $\rho$ is \emph{residually finite-dimensional} (or RFD) if there is a net of representations $\rho_\lambda:\A\rightarrow B(\H)$ such that $\rC^*(\rho_\lambda(\A))\H$ is finite-dimensional and \[\sot\lim_\lambda\rho_\lambda(a) = \rho(a), \ \ \ \ \ \ a\in\A.\]We call the representation \emph{$*$-residually finite-dimensional} (or $*$-RFD) if we further impose that \[\sot\lim_\lambda\rho_\lambda(a)^* = \rho(a)^*, \ \ \ \ \ \ a\in\A.\]In other words, $\rho$ is RFD (respectively, $*$-RFD) if it is the point-strong limit (or point-strong$*$ limit) of certain finite-dimensional representations of the operator algebra. These two concepts were introduced in \cite{clouatre2021finite} using different terminology. We will utilize the following result from their work several times. That is, $\rC^*_{max}(\A)$ is RFD if and only if every representation of $\A$ is $*$-RFD \cite[Theorem 3.3]{clouatre2021finite}. 

In the self-adjoint setting, RFD and $*$-RFD representations coincide and agree with the notion of a residually finite-dimensional $*$-representation (as introduced in Subsection \ref{SS:RFD}) . We warn the reader that these refer to three different notions. An RFD or $*$-RFD representation refers to a possibly non self-adjoint setting. On the other hand, an RFD $*$-representation refers to the self-adjoint setting.

First, we remark that residual finite-dimensionality of an operator algebra is equivalent to the algebra possessing either an RFD or $*$-RFD embedding: 

\begin{proposition}\label{P:cisRFD}
Let $\A$ be an operator algebra. Then, the following statements are equivalent:\begin{enumerate}[{\rm (i)}]
\item $\A$ is RFD;
\item there exists a completely isometric $*$-RFD representation of $\A$;
\item there exists a completely isometric RFD representation of $\A$.
\end{enumerate}
\end{proposition}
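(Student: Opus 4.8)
The plan is to prove the cycle (iii) $\Rightarrow$ (i) $\Rightarrow$ (ii) $\Rightarrow$ (iii), where the last implication is trivial since every $*$-RFD representation is by definition RFD. The implication (i) $\Rightarrow$ (ii) is the heart of the matter: given an RFD operator algebra $\A$, I want to produce a completely isometric representation that is not merely the direct sum of finite-dimensional representations but is actually a point-strong$*$ limit of finite-dimensional representations. The natural candidate is the embedding $\upsilon : \A \to \fR(\A)$ into the RFD-maximal $\rC^*$-cover, or equivalently the concrete embedding $\iota = \bigoplus_{\rho \in \F} \rho$ of Theorem \ref{T:concreterepn}, where $\F$ is the set of all finite-dimensional representations of $\A$. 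By definition this is completely isometric when $\A$ is RFD. It remains to check that such a representation is $*$-RFD.

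For (iii) $\Rightarrow$ (i): if $\kappa : \A \to B(\H)$ is a completely isometric RFD representation, then $\kappa$ is the point-strong limit of finite-dimensional representations $\kappa_\lambda$. Since $\rC^*(\kappa_\lambda(\A))\H$ is finite-dimensional, each $\kappa_\lambda$ decomposes (after restricting to the essential subspace and discarding the degenerate part, or rather by compressing to finitely many orthogonal finite-dimensional reducing subspaces) into a direct sum/integral of finite-dimensional representations of $\A$, so each $\kappa_\lambda$ factors through $\prod_{\rho \in \F}\bM_{r_\rho}$. The point-strong limit $\kappa$ then satisfies $\lVert \kappa^{(n)}(X) \rVert \le \sup_\lambda \lVert \kappa_\lambda^{(n)}(X)\rVert \le \lVert \iota^{(n)}(X)\rVert$ for $X \in \bM_n(\A)$, where $\iota$ is the completely isometric embedding into the product of matrix algebras; combined with $\kappa$ being completely isometric, this shows $\iota$ is completely isometric, i.e. $\A$ is RFD. (One has to be a little careful that strong-operator limits of contractions are contractions, which is standard.)

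For (i) $\Rightarrow$ (ii), the key observation is that $\iota = \bigoplus_{\rho \in \F}\rho$ can be realized as a point-strong$*$ limit of finite-dimensional representations of $\A$: order the finite subsets $G \subset \F$ by inclusion and set $\iota_G = \bigoplus_{\rho \in G}\rho \oplus 0$, a representation on the same Hilbert space $\H = \bigoplus_{\rho \in \F}\bC^{r_\rho}$ whose range generates a finite-dimensional $\rC^*$-algebra acting on a finite-dimensional essential subspace. For a fixed vector $\xi \in \H$, the tail $\sum_{\rho \notin G}\lVert \rho(a)\xi_\rho\rVert^2$ goes to $0$ along the net, and since this bound involves only the norm of $\xi$ and not $a$, one gets $\sot\lim_G \iota_G(a) = \iota(a)$; the identical estimate applied to $a^*$-compressions, i.e. to $\iota_G(a)^* = \bigoplus_{\rho\in G}\rho(a)^* \oplus 0$, gives $\sot\lim_G \iota_G(a)^* = \iota(a)^*$. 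Hence $\iota$ is $*$-RFD, and since it is completely isometric when $\A$ is RFD, (ii) follows.

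I expect the only genuine subtlety — the "hard part," though it is not really hard — to be the bookkeeping in (iii) $\Rightarrow$ (i): verifying that a representation $\kappa_\lambda$ whose generated $\rC^*$-algebra acts finite-dimensionally on $\kappa_\lambda(\A)\H$ really does factor (up to the norm estimate needed) through finitely many finite-dimensional representations of $\A$, handling possible degeneracy carefully. Everything else is a routine manipulation of strong-operator limits of contractions together with the already-established concrete description of the RFD-maximal cover in Theorem \ref{T:concreterepn}.
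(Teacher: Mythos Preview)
Your proposal is correct and follows essentially the same route as the paper. The only cosmetic differences are: for (i)$\Rightarrow$(ii) the paper quotes \cite[Lemma 2.2]{clouatre2021finite} (direct sums of $*$-RFD representations are $*$-RFD) rather than writing out the finite-partial-sum net $\iota_G$ explicitly as you do; and for (iii)$\Rightarrow$(i) the paper simply forms $\bigoplus_\lambda U_\lambda \pi_\lambda(\cdot) U_\lambda^*$ from the approximating net and observes it is completely isometric (via the same lower-semicontinuity of the norm under strong limits that you invoke), whereas you compare against the universal direct sum $\iota=\bigoplus_{\rho\in\F}\rho$. Your handling of the ``bookkeeping'' in (iii)$\Rightarrow$(i) is fine: since $\rC^*(\kappa_\lambda(\A))\H$ is finite-dimensional, the essential subspace is finite-dimensional and reducing, $\kappa_\lambda$ vanishes on its complement, and the compression is unitarily equivalent to some $\rho\in\F$, which is exactly what the paper uses implicitly.
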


\begin{proof}
{\rm (i)}$\Rightarrow${\rm (ii)}: Let $\iota:\A\rightarrow\prod_\lambda\bM_{r_\lambda}$ be a completely isometric representation. For each $\lambda$, let $\gamma_\lambda: \rC^*(\iota(\A))\rightarrow\bM_{r_\lambda}$ denote the projection mapping. Then the representation $j:= \bigoplus_\lambda \gamma_\lambda\circ\iota$ is completely isometric. Moreover, since $\gamma_\lambda\circ\iota$ is $*$-RFD for every $\lambda$, it follows that $j$ is $*$-RFD by \cite[Lemma 2.2]{clouatre2021finite}.

{\rm (ii)}$\Rightarrow${\rm (iii)}: Obvious.

{\rm (iii)}$\Rightarrow${\rm (i)}: Let $\iota:\A\rightarrow B(\H)$ be a completely isometric RFD representation. Then there is a net of representations $\pi_\lambda:\A\rightarrow B(\H), \lambda\in\Lambda,$ such that $\rC^*(\pi_\lambda(\A))\H$ is finite-dimensional and $\pi_\lambda(a)$ converges strongly to $\iota(a)$ for each $a\in\A$. For each $\lambda$, we may find a positive integer $r_\lambda$ and a unitary operator $U_\lambda: \rC^*(\pi_\lambda(\A))\H\rightarrow\bC^{r_\lambda}$. Then $\bigoplus_\lambda U_\lambda\pi_\lambda(\cdot)U_\lambda^*$ is a completely isometric representation of $\A$.
\end{proof}

Now we identify two constructions for $\rC^*$-covers of an RFD operator algebra. This is similar to the concrete representation of the RFD-maximal $\rC^*$-cover but we require extra machinery to account for set-theoretic technicalities. This process can be seen in the classical construction of the maximal $\rC^*$-cover \cite[Proposition 2.4.2]{blecher2004operator}. Our notation will be consistent throughout.

Let $\A$ be an RFD operator algebra. Suppose that the cardinality of $\A$ is less than or equal to a cardinal $\kappa$ satisfying $\kappa^{\aleph_0} = \kappa$. For each cardinal $\alpha\leq\kappa,$ fix a Hilbert space $\H_\alpha$ of dimension $\alpha$. Let $\F$ denote the set of all RFD representations $\rho:\A\rightarrow B(\H_\alpha), \alpha\leq \kappa$. Define a representation $\upsilon_s= \bigoplus_{\rho\in\F}\rho$. Necessarily, $\kappa$ is infinite. In particular, $\F$ contains a representative from each unitary equivalence class consisting of representations of $\A$ which act on a finite-dimensional Hilbert space. As $\A$ is RFD, $\upsilon_s$ is completely isometric. Letting $\fR_s(\A) = \rC^*(\upsilon_s(\A))$, we see that $(\fR_s(\A), \upsilon_s)$ is a $\rC^*$-cover of $\A$.

Similarly, let $\G$ denote the set of all $*$-RFD representations $\rho:\A\rightarrow B(\H_\alpha)$ where $\alpha$ is a cardinal such that $\alpha\leq \kappa$. Define a representation $\upsilon_{*s} = \bigoplus_{\rho\in\G}\rho$. Similarly, the map $\upsilon_{*s}$ is completely isometric. Letting $\fR_{*s}(\A) = \rC^*(\upsilon_{*s}(\A))$, we have that $(\fR_{*s}(\A), \upsilon_{*s})$ is a $\rC^*$-cover of $\A$.

It is not immediately clear whether either of the $\rC^*$-algebras, $\fR_s(\A)$ or $\fR_{*s}(\A)$, generate RFD $\rC^*$-algebras. We will work towards showing that the latter does in fact generate an RFD $\rC^*$-algebra. Due to the succeeding Lemma \ref{L:RFDstarRFDorder}, this will imply that the $\rC^*$-cover $(\fR_{*s}(\A), \upsilon_{*s})$ is equivalent to the RFD-maximal $\rC^*$-cover.

\begin{lemma}\label{L:RFDstarRFDorder}
Let $\A$ be an RFD operator algebra. Then, we have that $(\fR(\A), \upsilon)\preceq(\fR_{*s}(\A), \upsilon_{*s})\preceq(\fR_s(\A), \upsilon_s)$.
\end{lemma}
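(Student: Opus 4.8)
The key is that all three $\rC^*$-covers are built as direct sums of the same kind of data, nested by inclusion of index sets. I want to produce surjective $*$-representations going in the right directions, using Proposition \ref{P:toporder}, by exhibiting for each cover a $*$-homomorphism onto the smaller cover which intertwines the completely isometric embeddings. The first inequality $(\fR_{*s}(\A), \upsilon_{*s})\preceq(\fR_s(\A), \upsilon_s)$ should follow because every $*$-RFD representation is in particular an RFD representation, so the index set $\G$ defining $\upsilon_{*s}$ embeds into the index set $\F$ defining $\upsilon_s$; restricting the direct sum $\upsilon_s=\bigoplus_{\rho\in\F}\rho$ to the coordinates in $\G$ gives a surjective $*$-representation $\fR_s(\A)\to\fR_{*s}(\A)$ compatible with the embeddings. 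I have to be slightly careful here: the two constructions use the same cardinal $\kappa$ and the same fixed Hilbert spaces $\H_\alpha$, so $\G\subseteq\F$ literally as sets of maps, and the coordinate projection $\prod_{\rho\in\F}B(\H_{\alpha_\rho})\to\prod_{\rho\in\G}B(\H_{\alpha_\rho})$ restricts to a $*$-homomorphism on $\fR_s(\A)=\rC^*(\upsilon_s(\A))$ whose image is $\rC^*$ of the image of $\upsilon_s(\A)$, namely $\fR_{*s}(\A)$; and it sends $\upsilon_s(a)$ to $\upsilon_{*s}(a)$.

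**The second inequality.** For $(\fR(\A), \upsilon)\preceq(\fR_{*s}(\A), \upsilon_{*s})$, by Proposition \ref{P:toporder} and Theorem \ref{T:concreterepn} it suffices to show that every finite-dimensional representation $\rho:\A\to B(\H_\rho)$ factors through $(\fR_{*s}(\A), \upsilon_{*s})$, i.e., there is a $*$-representation $\theta:\fR_{*s}(\A)\to B(\H_\rho)$ with $\theta\circ\upsilon_{*s}=\rho$; then minimality of $(\fQ,\iota)\sim(\fR(\A),\upsilon)$ from Theorem \ref{T:concreterepn} gives $(\fR(\A),\upsilon)\preceq(\fR_{*s}(\A),\upsilon_{*s})$. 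And indeed any finite-dimensional representation is trivially $*$-RFD (the constant net works, and on finite-dimensional spaces $\sot$ convergence of operators and of adjoints is automatic), so—after conjugating by a unitary onto one of the fixed model spaces $\H_\alpha$ with $\alpha\leq\kappa$ finite, which is possible since $\kappa$ is infinite—a unitarily equivalent copy $\chi$ of $\rho$ lies in $\G$, and the coordinate cut-down $\theta(T)=P_\chi T\!\mid_{\H_\alpha}$ followed by the unitary conjugation gives the desired $\theta$, exactly as in the proof of Theorem \ref{T:concreterepn}.

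**Main obstacle.** There is no deep obstacle; the statement is essentially bookkeeping once one notices that all three covers arise from direct sums over nested families. The one genuine subtlety to get right is set-theoretic: I must make sure the cardinal $\kappa$ and the fixed model Hilbert spaces $\{\H_\alpha:\alpha\leq\kappa\}$ are chosen once and used for all three constructions, so that $\G\subseteq\F$ holds on the nose (not merely up to unitary equivalence), and that every finite-dimensional representation has a unitarily equivalent copy with underlying space among the $\H_\alpha$. This is already arranged in the paragraph preceding the lemma, so the proof can simply invoke it. A second minor point is to confirm, when passing the coordinate projection from $\prod_\rho B(\H_{\alpha_\rho})$ down to $\rC^*(\upsilon_s(\A))$, that the image is exactly $\rC^*(\upsilon_{*s}(\A))$ and not something larger—this follows from the standard fact that a $*$-homomorphism carries $\rC^*(S)$ onto $\rC^*(\text{image of }S)$, which is already used repeatedly in the paper. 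Apart from these, the proof is two short applications of Proposition \ref{P:toporder} / Theorem \ref{T:concreterepn}.
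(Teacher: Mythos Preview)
Your proposal is correct and follows essentially the same approach as the paper's proof: both use the literal containment $\G\subset\F$ and the resulting coordinate projection for the first inequality, and both invoke the minimality clause of Theorem~\ref{T:concreterepn} for the second inequality by showing every finite-dimensional representation of $\A$ lifts to $\fR_{*s}(\A)$ via the coordinate cut-down construction.
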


\begin{proof}
Note that the subsets $\F$ and $\G$ defined above satisfy $\G\subset\F$. Whence, there is a natural projection mapping $\Theta: \fR_s(\A)\rightarrow\fR_{*s}(\A)$ such that $\Theta\circ\upsilon_s = \upsilon_{*s}.$ So $(\fR_{*s}(\A), \upsilon_{*s})\preceq(\fR_s(\A), \upsilon_s).$ 

To show that $(\fR(\A), \upsilon)\preceq(\fR_{*s}(\A), \upsilon_{*s})$, we use minimality of Theorem \ref{T:concreterepn}. Let $\rho:\A\rightarrow B(\H_\rho)$ be a representation of $\A$ on a finite-dimensional Hilbert space. Then, there is $n\in\bN$ and a unitary operator $U_\rho:\H_n\rightarrow\H_\rho$ such that $\chi = U_\rho^*\rho(\cdot)U_\rho:\A\rightarrow B(\H_n)$. Similar to the proof of Theorem \ref{T:concreterepn}, the construction of $\fR_{*s}(\A)$ allows us to obtain a $*$-representation $\pi:\fR_{*s}(\A)\rightarrow B(\H_n)$ such that $\pi\circ\upsilon_{*s} = \chi$. Define a $*$-representation $\theta:\fR_{*s}(\A)\rightarrow B(\H_\rho)$ by $\theta = U\pi(\cdot) U^*$. Then $\theta\circ\upsilon_{*s} = \rho$. Therefore, $(\fR(\A), \upsilon)\preceq(\fR_{*s}(\A), \upsilon_{*s})$.
\end{proof}

We now identify how the two new $\rC^*$-covers are equipped with lifting properties similar to that exposed in Theorem \ref{T:concreterepn} with the RFD-maximal $\rC^*$-cover.

\begin{proposition}\label{P:RFDandStarRFDlifts}
Let $\A$ be an RFD operator algebra and $\rho:\A\rightarrow B(\H)$ be a representation. Then, the following statements hold: \begin{enumerate}[{\rm (i)}]
\item If $\rho$ is RFD, then there exists a unique $*$-representation $\theta:\fR_s(\A)\rightarrow B(\H)$ such that $\theta\circ\upsilon_s = \rho$;
\item If $\rho$ is $*$-RFD, then there exists a unique $*$-representation $\theta:\fR_{*s}(\A)\rightarrow B(\H)$ such that $\theta\circ\upsilon_{*s} = \rho$.
\end{enumerate}
\end{proposition}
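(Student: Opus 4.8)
The plan is to imitate the proof of Theorem~\ref{T:concreterepn}, using the cardinality arrangement built into $\F$ and $\G$ to reduce everything to representations on Hilbert spaces of dimension at most $\kappa$. Uniqueness of $\theta$ in each case is immediate and is argued exactly as there: if $\theta$ and $\tau$ are $*$-representations of $\fR_s(\A)$ (resp.\ $\fR_{*s}(\A)$) with $\theta\circ\upsilon_s=\rho=\tau\circ\upsilon_s$, then $\theta$ and $\tau$ agree on $\upsilon_s(\A)$, hence on the linear span of words in $\upsilon_s(\A)\cup\upsilon_s(\A)^*$, which is norm-dense in $\fR_s(\A)$; so $\theta=\tau$. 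It remains to produce $\theta$.

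For case (ii) I would bypass any decomposition and instead show that the canonical lift $\theta_\rho\colon\rC^*_{max}(\A)\to B(\H)$ of $\rho$ is itself an RFD $*$-representation. Write $\rho$ as a point-strong$*$ limit of representations $\rho_\lambda$ with $\rC^*(\rho_\lambda(\A))\H$ finite-dimensional. Since multiplication is jointly continuous on bounded sets in the strong$*$ topology and the adjoint is strong$*$-continuous, the lifts $\theta_{\rho_\lambda}$ converge to $\theta_\rho$ in the point-strong topology on the dense $*$-subalgebra generated by $\mu(\A)$, and then, by a $3\varepsilon$-estimate using $\lVert\theta_{\rho_\lambda}\rVert\le 1$, on all of $\rC^*_{max}(\A)$. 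Each $\theta_{\rho_\lambda}$ is a finite-dimensional $*$-representation together with a zero summand, so $\theta_\rho$ is a point-strong limit of finite-dimensional $*$-representations; hence $\lVert\theta_\rho(y)\rVert\le\sup_\sigma\lVert\sigma(y)\rVert$ for every $y$, the supremum being over the finite-dimensional irreducible $*$-representations $\sigma$ of $\rC^*_{max}(\A)$. For each such $\sigma$ the representation $\sigma\circ\mu$ of $\A$ is finite-dimensional, hence $*$-RFD, so it is unitarily equivalent to an element of $\G$ and $\sigma\cong\theta_{\sigma\circ\mu}$ factors through $q_{\fR_{*s}(\A)}$. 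As $\ker q_{\fR_{*s}(\A)}=\bigcap_{\varrho\in\G}\ker\theta_\varrho$, we get $\bigcap_\sigma\ker\sigma\supseteq\ker q_{\fR_{*s}(\A)}$, so $\ker q_{\fR_{*s}(\A)}\subseteq\ker\theta_\rho$ and $\theta_\rho=\theta\circ q_{\fR_{*s}(\A)}$ for a $*$-representation $\theta\colon\fR_{*s}(\A)\to B(\H)$; then $\theta\circ\upsilon_{*s}=\theta\circ q_{\fR_{*s}(\A)}\circ\mu=\rho$.

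For case (i) the point-strong$*$ route is closed off, so I would decompose $\H=\bigoplus_i\H_i$ with each $\H_i=\overline{\rC^*(\rho(\A))\xi_i}$ cyclic, hence reducing, for $\rC^*(\rho(\A))$; a cardinality count (since $\card\A\le\kappa=\kappa^{\aleph_0}$, $\rC^*(\rho(\A))$ has a norm-dense subset of cardinality $\le\kappa$) gives $\dim\H_i\le\kappa$. Each $\rho_i:=\rho(\cdot)|_{\H_i}$ is a representation of $\A$ and $\rho=\bigoplus_i\rho_i$. Granting that each $\rho_i$ is again RFD, the fact that $\rho_i$ is RFD and $\dim\H_i\le\kappa$ puts $\rho_i$, up to unitary equivalence, in $\F$, so the compression argument of Theorem~\ref{T:concreterepn} yields $\theta_i\colon\fR_s(\A)\to B(\H_i)$ with $\theta_i\circ\upsilon_s=\rho_i$, and $\theta:=\bigoplus_i\theta_i$ works.

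The hard part, and the only nonformal step, is thus the lemma that the restriction of an RFD representation $\rho$ of $\A$ to a subspace $\K$ reducing for $\rC^*(\rho(\A))$ is again RFD. My plan for it: with $P$ the projection onto $\K$ and $\rho=\sot\lim\rho_\lambda$ as above, $P\in\rC^*(\rho(\A))'$ forces $[\rho_\lambda(a),P]\to 0$ strongly, so the compressions $a\mapsto P\rho_\lambda(a)P|_\K$ are asymptotically multiplicative and converge strongly to $\rho|_\K$; one must then perturb these near-representations to genuine finite-dimensional representations of $\A$ on $\K$ — concretely, inside each finite-dimensional space $\D_\lambda=\rC^*(\rho_\lambda(\A))\H$ replace the compression $P_{\D_\lambda}PP_{\D_\lambda}$ of $P$ by a spectral projection that exactly reduces $\rho_\lambda|_{\D_\lambda}$, and transport the resulting finite-dimensional representation into $\K$. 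This almost-commuting-projection manoeuvre, together with the attendant bookkeeping that the $\H_i$ really are small enough to lie in $\F$, is where the work concentrates; case (ii) escapes it precisely because point-strong$*$ convergence, unlike point-strong convergence, is stable under both multiplication and the adjoint.
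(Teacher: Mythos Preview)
Your argument for (ii) is correct but takes a genuinely different route from the paper. You lift $\rho$ to $\theta_\rho$ on $\rC^*_{max}(\A)$, show $\theta_\rho$ is an RFD $*$-representation (this is exactly the forward direction of the later Theorem~\ref{T:LiftStarRFD}), and then factor through $\fR_{*s}(\A)$ by the kernel comparison $\ker q_{\fR_{*s}(\A)}\subset\bigcap_\sigma\ker\sigma\subset\ker\theta_\rho$. The paper instead treats (i) and (ii) by a single uniform argument: if $\dim\H\le\kappa$ the compression trick of Theorem~\ref{T:concreterepn} applies verbatim, since $\rho$ is unitarily equivalent to an element of $\G$ (resp.\ $\F$); if $\dim\H>\kappa$ one decomposes $\rho=\bigoplus_i\pi_i$ with $\dim\H_{\pi_i}\le\kappa$ via \cite[Proposition~2.4.2]{blecher2004operator} and invokes \cite[Lemma~2.4]{clouatre2021finite} to conclude that each summand $\pi_i$ is again $*$-RFD (resp.\ RFD), reducing to the first case. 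Your route for (ii) is longer here but effectively pre-proves Theorem~\ref{T:LiftStarRFD}; the paper's route is shorter because it postpones that analysis and leans on the cited lemma.

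For (i) your decomposition matches the paper's, but the step you flag as the ``hard part'' has a genuine gap. The paper does not prove that restriction of an RFD representation to a reducing subspace is again RFD; it simply cites \cite[Lemma~2.4]{clouatre2021finite} for exactly this. Your sketched perturbation argument does not work as written: a spectral projection of $P_{\D_\lambda}PP_{\D_\lambda}$ has no reason to commute with $\rho_\lambda(\A)$, so it will not reduce $\rho_\lambda|_{\D_\lambda}$, and upgrading an asymptotically multiplicative net of completely contractive maps on a non-self-adjoint operator algebra to genuine representations is a stability problem that fails in general. Note also that your successful route for (ii) cannot be recycled for (i): pushing the approximation from $\mu(\A)$ to words in $\mu(\A)\cup\mu(\A)^*$ requires point-strong$*$ convergence, which is precisely what RFD (as opposed to $*$-RFD) does not give you. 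So for (i), cite the lemma rather than attempting to reprove it.
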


\begin{proof}
We show {\rm (ii)} with the proof of {\rm (i)} being similar. First, assume that $\dim\H\leq\kappa$. It is easy to verify that $*$-RFD representations are stable under unitary equivalence. So we may find a unitary operator $U$ such that $\chi = U^*\rho(\cdot)U\in\G$. Define $\pi:\fR_{*s}(\A)\rightarrow B(\H_\chi)$ by $\pi(T) = P_{\H_\chi}T\mid_{\H_\chi}$. By construction of $\fR_{*s}(\A)$, we see that $\pi$ is a $*$-representation.  If we let $\theta = U\pi(\cdot)U^*$, then $\theta\circ\upsilon_{*s} = \rho$ as desired.

Now, suppose that $\dim\H>\kappa$. By \cite[Proposition 2.4.2]{blecher2004operator}, we may express $\rho = \bigoplus_i\pi_i$ where each $\pi_i:\A\rightarrow B(\K_i)$ is a representation of $\A$ on a Hilbert space with dimension at most $\kappa$. Due to \cite[Lemma 2.4]{clouatre2021finite}, each $\pi_i$ is also a $*$-RFD representation of $\A$ (or RFD for statement {\rm (i)}). By the previous paragraph, for each $i$, we obtain a $*$-representation $\theta_i:\fR_{*s}(\A)\rightarrow B(\K_i)$ such that $\theta_i\circ\upsilon_{*s} = \pi_i$. Taking $\theta = \bigoplus_i\theta_i$, we see that $\theta\circ\upsilon_{*s} = \rho$ as desired. Uniqueness holds for the same reason as in Theorem \ref{T:concreterepn}.
\end{proof}

We remark that Proposition \ref{P:RFDandStarRFDlifts} makes no statement on the residually finite-dimensionality of the resulting $*$-representations. This will be clarified by showing that the lift of a $*$-RFD representation as in Proposition \ref{P:RFDandStarRFDlifts} is necessarily an RFD $*$-representation. However, we do not know of a corresponding statement about RFD representations. The following is a refinement of \cite[Theorem 3.3]{clouatre2021finite}.

\begin{theorem}\label{T:LiftStarRFD}
Let $\A$ be an operator algebra and $\rho:\A\rightarrow B(\H)$ be a representation. Let $\theta:\rC^*_{max}(\A)\rightarrow B(\H)$ denote the $*$-representation satisfying $\theta\circ\mu=\rho$. Then, $\rho$ is a $*$-RFD representation if and only if $\theta$ is an RFD $*$-representation.
\end{theorem}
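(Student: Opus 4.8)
The plan is to prove the two implications separately; one is a routine push‑forward and the other carries the real content.

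For ``$\theta$ an RFD $*$-representation $\Rightarrow$ $\rho$ is $*$-RFD'', I would take a net $(\theta_\lambda)$ of $*$-representations of $\rC^*_{max}(\A)$ with $\theta_\lambda(\rC^*_{max}(\A))\H$ finite-dimensional and $\sot\lim_\lambda\theta_\lambda(t)=\theta(t)$ for all $t$, and set $\rho_\lambda=\theta_\lambda\circ\mu$. Each $\rho_\lambda$ is a representation of $\A$, being the composite of the completely contractive map $\mu$ with a $*$-representation, and $\rC^*(\rho_\lambda(\A))\H\subseteq\theta_\lambda(\rC^*_{max}(\A))\H$ is finite-dimensional. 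Applying the strong convergence to $\mu(a)$ gives $\sot\lim_\lambda\rho_\lambda(a)=\rho(a)$; applying it to $\mu(a)^*$ and using that each $\theta_\lambda$ is $*$-preserving gives $\sot\lim_\lambda\rho_\lambda(a)^*=\sot\lim_\lambda\theta_\lambda(\mu(a)^*)=\theta(\mu(a)^*)=\rho(a)^*$. Hence $\rho$ is $*$-RFD.

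For the converse, I would let $(\rho_\lambda)$ be a net witnessing that $\rho$ is $*$-RFD and lift each $\rho_\lambda$ through the universal property of $\rC^*_{max}(\A)$ to the unique $*$-representation $\theta_\lambda\colon\rC^*_{max}(\A)\to B(\H)$ with $\theta_\lambda\circ\mu=\rho_\lambda$. Since $\theta_\lambda(\rC^*_{max}(\A))=\rC^*(\theta_\lambda(\mu(\A)))=\rC^*(\rho_\lambda(\A))$, each $\theta_\lambda$ is a finite-dimensional $*$-representation; the remaining task is to show $\theta_\lambda(t)\to\theta(t)$ strongly for every $t\in\rC^*_{max}(\A)$. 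The hypotheses supply this on $\mu(\A)$ in the stronger $*$-strong sense, i.e. $\theta_\lambda(w)\to\theta(w)$ and $\theta_\lambda(w)^*\to\theta(w)^*$ strongly for $w\in\mu(\A)$. To propagate it I would consider
\[
\D=\{t\in\rC^*_{max}(\A):\ \theta_\lambda(t)\to\theta(t)\text{ and }\theta_\lambda(t)^*\to\theta(t)^*\text{ strongly}\}
\]
and check that $\D$ is a norm-closed $*$-subalgebra containing $\mu(\A)$, whence $\D=\rC^*(\mu(\A))=\rC^*_{max}(\A)$ and $\theta$ is an RFD $*$-representation. Linearity and self-adjointness of $\D$ are immediate; norm-closedness follows from the uniform bound $\lVert\theta_\lambda(t)\rVert\le\lVert t\rVert$ by a standard $\epsilon/3$ estimate; and closure under products follows because on norm-bounded sets multiplication is jointly continuous for $*$-strong convergence, again using $\lVert\theta_\lambda(s)\rVert\le\lVert s\rVert$ and $\lVert\theta_\lambda(t)\rVert\le\lVert t\rVert$ together with the fact that $\theta$ is multiplicative.

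The main obstacle is precisely this last point: strong-operator convergence is not preserved under products, so one cannot naively pass from convergence on $\mu(\A)$ to convergence on the dense $*$-subalgebra it generates. The resolution is to work with $*$-strong convergence throughout --- which is exactly the extra information encoded in $\rho$ being $*$-RFD rather than merely RFD --- together with the uniform contractivity of the $\theta_\lambda$, which makes multiplication jointly continuous on the relevant bounded sets. One could instead phrase the argument with the point-weak topology, which coincides with the point-strong topology on representations of a $\rC^*$-algebra by \cite[3.5.2]{diximier1977c}, but the $*$-strong formulation keeps the multiplicativity of $\D$ transparent.
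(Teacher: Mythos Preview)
Your proposal is correct and follows essentially the same route as the paper: both directions use the obvious lifting/restriction, and the forward implication propagates $*$-strong convergence from $\mu(\A)$ to all of $\rC^*_{max}(\A)$ via joint continuity of multiplication on bounded sets together with density of the $*$-algebra generated by $\mu(\A)$. The only difference is presentational --- you package the propagation step as showing that a certain set $\D$ is a norm-closed $*$-subalgebra, whereas the paper argues directly on finite sums of words in $\mu(\A)\cup\mu(\A)^*$ and then passes to the closure.
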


\begin{proof}
$(\Rightarrow)$ Let $\rho_\lambda:\A\rightarrow B(\H), \lambda\in\Lambda$, be a net of representations such that $\rC^*(\rho_\lambda(\A))\H$ is finite-dimensional and \[*\sot\lim_\lambda \rho_\lambda(a) = \rho(a), \ \ \ \ \ \ a\in\A.\]For each $\rho_\lambda$, we obtain a corresponding $*$-representation $\theta_\lambda: \rC^*_{max}(\A)\rightarrow B(\H)$ satisfying $\theta_\lambda\circ\mu=\rho_\lambda$. Note that \[\theta_\lambda(\rC^*_{max}(\A))\H = \rC^*(\theta_\lambda\circ\mu(\A))\H = \rC^*(\rho_\lambda(\A))\H\]is finite-dimensional for each $\lambda$. For any $w\in\rC^*_{max}(\A)$ which is a finite sum of words in $\mu(\A)\cup\mu(\A)^*$, we obtain that\[*\sot\lim_\lambda\theta_\lambda(w)=\theta(w)\]as the adjoint, addition and multiplication are jointly continuous over bounded sets. This set is dense in $\rC^*_{max}(\A)$ and so $\theta$ is an RFD $*$-representation.

$(\Leftarrow)$ Suppose that $\theta_\lambda, \lambda\in\Lambda$, is a net of $*$-representations such that $\theta_\lambda(\rC^*_{max}(\A))\H$ is finite-dimensional and \[\sot\lim_\lambda\theta_\lambda(t) = \theta(t), \ \ \ \ \ \ \ t\in\rC^*_{max}(\A).\]Then the net $(\theta_\lambda\circ\mu)_\lambda$ demonstrates that $\rho$ is a $*$-RFD representation of $\A$. 
\end{proof}

As a result, we have that $(\fR_{*s}(\A), \upsilon_s)$ is equivalent to the RFD-maximal $\rC^*$-cover:

\begin{corollary}\label{C:RFDmaxLiftsStarRFD}
If $\A$ is an RFD operator algebra, then $(\fR(\A), \upsilon)\sim(\fR_{*s}(\A), \upsilon_{*s})$. In particular, for any $*$-RFD representation $\rho:\A\rightarrow B(\H_\rho)$, there is a unique $*$-representation $\theta:\fR(\A)\rightarrow B(\H_\rho)$ such that $\theta\circ\upsilon = \rho$. Furthermore, $\theta$ is an RFD $*$-representation.
\end{corollary}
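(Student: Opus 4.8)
The plan is to verify the two conditions of Corollary~\ref{C:allFDfactor} for the cover $(\fR_{*s}(\A),\upsilon_{*s})$: that $\fR_{*s}(\A)$ is an RFD $\rC^*$-algebra, and that every equivalence class of finite-dimensional irreducible $*$-representations of $\rC^*_{max}(\A)$ lies in $\S(\fR_{*s}(\A),\upsilon_{*s})$; these two facts deliver $(\fR_{*s}(\A),\upsilon_{*s})\sim(\fR(\A),\upsilon)$ at once. Write $\Theta=q_{\fR_{*s}(\A)}\colon\rC^*_{max}(\A)\to\fR_{*s}(\A)$ for the canonical surjection, so that $\Theta\circ\mu=\upsilon_{*s}$. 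Since $\upsilon_{*s}=\bigoplus_{\rho\in\G}\rho$ and each $\rho\in\G$ lifts through $\mu$ to a $*$-representation $\theta_\rho$ of $\rC^*_{max}(\A)$ with $\theta_\rho\circ\mu=\rho$, uniqueness of lifts forces $\Theta=\bigoplus_{\rho\in\G}\theta_\rho$, and hence $\ker\Theta=\bigcap_{\rho\in\G}\ker\theta_\rho$.

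I would dispatch the factoring condition first. Let $\sigma$ be a finite-dimensional irreducible $*$-representation of $\rC^*_{max}(\A)$. Then $\sigma\circ\mu$ is a representation of $\A$ on a finite-dimensional Hilbert space, hence $*$-RFD (witnessed by the constant net), so Proposition~\ref{P:RFDandStarRFDlifts}{\rm (ii)} yields a $*$-representation $\theta\colon\fR_{*s}(\A)\to B(\H_\sigma)$ with $\theta\circ\upsilon_{*s}=\sigma\circ\mu$. Then $\theta\circ\Theta$ and $\sigma$ agree on the generating subset $\mu(\A)$, hence on $\rC^*_{max}(\A)$, so $\sigma=\theta\circ\Theta$; thus $[\sigma]\in\S(\fR_{*s}(\A),\upsilon_{*s})$, and along the way $\ker\Theta\subseteq\ker\sigma$.

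The substantive step---and the one I expect to be the main obstacle---is residual finite-dimensionality of $\fR_{*s}(\A)$, which I would obtain by showing $\ker\Theta=\fI$, where $\fI:=\bigcap\{\ker\sigma:\sigma\text{ a finite-dimensional irreducible }*\text{-representation of }\rC^*_{max}(\A)\}$. Intersecting the inclusion from the preceding paragraph over all such $\sigma$ gives $\ker\Theta\subseteq\fI$. For the reverse inclusion, each $\rho\in\G$ is $*$-RFD, so Theorem~\ref{T:LiftStarRFD} makes its lift $\theta_\rho$ an RFD $*$-representation of $\rC^*_{max}(\A)$, say a point-strong limit of finite-dimensional $*$-representations $\theta_{\rho,\lambda}$; each $\theta_{\rho,\lambda}$ decomposes as a direct sum of finite-dimensional irreducibles and a zero summand, whence $\ker\theta_{\rho,\lambda}\supseteq\fI$, and passing to the limit $\ker\theta_\rho\supseteq\fI$. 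Intersecting over $\rho\in\G$ gives $\ker\Theta\supseteq\fI$, so $\ker\Theta=\fI$ and $\fR_{*s}(\A)\cong\rC^*_{max}(\A)/\fI$. Every finite-dimensional irreducible $*$-representation of $\rC^*_{max}(\A)$ annihilates $\fI$, so it descends to $\rC^*_{max}(\A)/\fI$, and the direct sum of a representative set of these descended representations is injective; hence the finite-dimensional irreducible classes are dense in $\widehat{\rC^*_{max}(\A)/\fI}$ and $\fR_{*s}(\A)$ is RFD by Theorem~\ref{T:RFDdensity}. Corollary~\ref{C:allFDfactor} now gives the desired equivalence, consistent with Lemma~\ref{L:RFDstarRFDorder}.

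Finally, I would transport the lifting property along the $*$-isomorphism $\Phi\colon\fR_{*s}(\A)\to\fR(\A)$ implementing the equivalence ($\Phi\circ\upsilon_{*s}=\upsilon$). Given a $*$-RFD representation $\rho\colon\A\to B(\H_\rho)$, Proposition~\ref{P:RFDandStarRFDlifts}{\rm (ii)} provides $\theta'\colon\fR_{*s}(\A)\to B(\H_\rho)$ with $\theta'\circ\upsilon_{*s}=\rho$, and $\theta:=\theta'\circ\Phi^{-1}$ satisfies $\theta\circ\upsilon=\rho$; it is the unique such $*$-representation because it is determined on the generating set $\upsilon(\A)$. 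Since $\fR(\A)$ is an RFD $\rC^*$-algebra, every $*$-representation of it is RFD by Theorem~\ref{T:RFDdensity}{\rm (iii)}, so in particular $\theta$ is an RFD $*$-representation. The care required is concentrated in the equality $\ker\Theta=\fI$, which marries the ``large kernel'' consequence of Theorem~\ref{T:LiftStarRFD} to the factoring property, together with checking that the construction of $\fR_{*s}(\A)$ really exhausts all finite-dimensional representations of $\A$ up to unitary equivalence.
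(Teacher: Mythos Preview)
Your proof is correct and hinges on the same key input as the paper---Theorem~\ref{T:LiftStarRFD}---but organizes the argument along a different, more ideal-theoretic axis. The paper shows $\fR_{*s}(\A)$ is RFD by observing that $\upsilon_{*s}$ itself is $*$-RFD (being a direct sum of $*$-RFD representations), so its lift $q$ to $\rC^*_{max}(\A)$ is an RFD $*$-representation; the finite-dimensional approximants to $q$ are then descended back to $\fR_{*s}(\A)$ via Proposition~\ref{P:RFDandStarRFDlifts}{\rm (ii)}, exhibiting the identity representation of $\fR_{*s}(\A)$ as an RFD $*$-representation. You instead compute $\ker\Theta$ explicitly as the intersection $\fI$ of kernels of all finite-dimensional irreducibles of $\rC^*_{max}(\A)$, which identifies $\fR_{*s}(\A)$ with $\rC^*_{max}(\A)/\fI$ and gives RFD by density. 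Your route yields the extra structural information $\fR_{*s}(\A)\cong\rC^*_{max}(\A)/\fI$ directly (indeed, since $\ker q_{\fR(\A)}=\fI$ by Theorem~\ref{T:topRFDmax}, equality of kernels already delivers the equivalence via Proposition~\ref{P:toporder}, so your appeal to Corollary~\ref{C:allFDfactor} could even be shortened). For the final clause that $\theta$ is RFD, your use of Theorem~\ref{T:RFDdensity}{\rm (iii)} is cleaner than the paper's appeal to Theorem~\ref{T:LiftStarRFD}, which requires one to note that finite-dimensional approximants at the level of $\rC^*_{max}(\A)$ factor through $\fR(\A)$.
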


\begin{proof}
As $\upsilon_{*s}:\A\rightarrow B(\H)$ is a direct sum of $*$-RFD representations, it follows that $\upsilon_{*s}$ is also a $*$-RFD representation \cite[Lemma 2.2]{clouatre2021finite}. Let $q:\rC^*_{max}(\A)\rightarrow \fR_{*s}(\A)$ be the surjective $*$-representation satisfying $q\circ\mu = \upsilon_{*s}$. By Theorem \ref{T:LiftStarRFD}, we have that $q$ is an RFD $*$-representation.

Let $\theta_\lambda:\rC^*_{max}(\A)\rightarrow B(\H), \lambda\in\Lambda,$ be a net of $*$-representations such that $\theta_\lambda(\rC^*_{max}(\A))\H$ is finite-dimensional for each $\lambda$ and \[\sot\lim_\lambda\theta_\lambda(t)=q(t), \ \ \ \ \ \ t\in\rC^*_{max}(\A).\]For each $\lambda\in\Lambda$, define a representation of $\A$ by $\rho_\lambda=\theta_\lambda\circ\mu$. By Theorem \ref{T:LiftStarRFD}, $\rho_\lambda$ is a $*$-RFD representation of $\A$. Hence, by Proposition \ref{P:RFDandStarRFDlifts} {\rm (ii)}, there is a $*$-representation $\widehat{\theta}_\lambda: \fR_{*s}(\A)\rightarrow B(\H)$ such that $\widehat{\theta}_\lambda\circ\upsilon_{*s}=\rho_\lambda.$ Observe that \[\widehat{\theta}_\lambda\circ q\circ \mu = \widehat{\theta}_\lambda\circ\upsilon_{*s} = \rho_\lambda = \theta_\lambda\circ\mu.\]Hence, $\widehat{\theta}_\lambda\circ q= \theta_\lambda$ and so\[\sot\lim_\lambda \widehat{\theta}_\lambda\circ q(t)  = q(t), \ \ \ \ \ \ t\in\rC^*_{max}(\A).\]Since $\widehat{\theta}_\lambda(\fR_{*s}(\A))\H = \theta_\lambda(\rC^*_{max}(\A))\H$ is finite-dimensional for each $\lambda\in\Lambda,$ we obtain that the identity representation of $\fR_{*s}(\A)$ is an RFD $*$-representation. By Proposition \ref{P:cisRFD} (or \cite[Theorem 2.4]{exel1992finite}), we obtain that $\fR_{*s}(\A)$ is an RFD $\rC^*$-algebra. So $(\fR_{*s}(\A), \upsilon_{*s})\preceq(\fR(\A), \upsilon)$ and Lemma \ref{L:RFDstarRFDorder} implies that the $\rC^*$-covers are equivalent. The last two statements are direct consequences of Proposition \ref{P:RFDandStarRFDlifts} {\rm (ii)} and Theorem \ref{T:LiftStarRFD}.
\end{proof}

The reader should exercise some care in utilizing Corollary \ref{C:RFDmaxLiftsStarRFD}. Indeed, let $\A$ be an RFD operator algebra and $(\fA, \iota)$ be a $\rC^*$-cover such that $(\fA, \iota)\preceq(\fR(\A), \upsilon)$. Suppose $\rho$ is a representation of $\A$ which lifts to a $*$-representation $\theta$ of $\fA$. Then, $\rho$ is a $*$-RFD representation and the lift of $\rho$ to $\fR(\A)$ is an RFD $*$-representation. Withstanding this, it is not necessarily true that $\theta$ itself will be RFD:

\begin{example}\label{E:ShiftStarRFD}
Let $\H$ be a separable, infinite-dimensional Hilbert space with orthonormal basis $\{e_n : n\in\bN\}$. For each $n\in\bN$, let $P_n$ be the orthogonal projection onto the subspace spanned by $\{e_1, \ldots, e_n\}$. Let $\A$ denote the operator algebra consisting of triangular operators. That is, $T\in\A$ precisely when $P_n TP_n = TP_n$ for every $n\in\bN$. It is known that $\A$ is an RFD operator algebra \cite[Example 3.3]{clouatre2019residual}. Also, note that $\rC^*(\A)$ is the Toeplitz algebra.

Define a representation $\pi_n:\A\rightarrow B(\H)$ by $\pi_n(T) = TP_n$. We see that $\rC^*(\pi_n(\A))\H$ is finite-dimensional and that \[*\sot\lim_{n\rightarrow\infty}\pi_n(T) = T, \ \ \ \ \ \ T\in\A,\]as $(P_n)_{n\geq1}$ converges strongly to the identity operator on $\H$. Therefore the identity representation of $\A$ is $*$-RFD. But the identity representation of $\A$ lifts to the identity representation of $\rC^*(\A)$, which is not an RFD $*$-representation. Indeed, otherwise $\rC^*(\A)$ is an RFD $\rC^*$-algebra by Proposition \ref{P:cisRFD}. This is a contradiction because $\rC^*(\A)$ contains $\fK(\H)$, which is not RFD.
\end{example}

In what follows next, we identify the connection between RFD and $*$-RFD representations. Although the SOT and $*$-SOT topologies are distinct, we are unable to determine whether RFD and $*$-RFD representations actually differ. For Question \ref{Q:main} to hold in the affirmative, it would be necessary that RFD and $*$-RFD representations coincide for RFD operator algebras. We now reflect on this and summarize when it is true. This reconceptualizes \cite[Theorem 3.3]{clouatre2021finite} by using intermediate $\rC^*$-covers.

\begin{theorem}\label{T:starRFDequivalence} 
Let $\A$ be an RFD operator algebra. Consider the following statements: \begin{enumerate}[{\rm (i)}]
\item $\rC^*_{max}(\A)$ is an RFD $\rC^*$-algebra;
\item for any completely isometric RFD representation $\iota:\A\rightarrow B(\H)$, we have that $(\rC^*(\iota(\A)), \iota)\preceq(\fR(\A), \upsilon)$;
\item $(\fR_s(\A), \upsilon_s)\sim(\fR(\A), \upsilon)$;
\item every RFD representation of $\A$ is $*$-RFD;
\item for any RFD representation $\rho:\A\rightarrow B(\H)$, there is an RFD $*$-representation $\theta:\rC^*_{max}(\A)\rightarrow B(\H)$ satisfying $\theta\circ\mu=\rho$;
\end{enumerate}Then, we have that\[{\rm (i)}\Longrightarrow{\rm (ii)}\Longleftrightarrow{\rm (iii)}\Longleftrightarrow{\rm (iv)}\Longleftrightarrow{\rm (v)}.\]If, in addition, every representation of $\A$ is RFD, then ${\rm (v)}\Longrightarrow{\rm (i)}$.
\end{theorem}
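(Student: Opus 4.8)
The plan is to verify the cycle ${\rm (ii)}\Rightarrow{\rm (iii)}\Rightarrow{\rm (v)}\Rightarrow{\rm (iv)}\Rightarrow{\rm (ii)}$, adjoin ${\rm (i)}\Rightarrow{\rm (ii)}$, and finally deduce ${\rm (v)}\Rightarrow{\rm (i)}$ from the extra hypothesis. The one recurring technical point is elementary: if $q:\rC^*_{max}(\A)\to\fB$ is a surjective $*$-representation and $\sigma$ is an RFD $*$-representation of $\fB$, then $\sigma\circ q$ is an RFD $*$-representation of $\rC^*_{max}(\A)$, since a net of finite-dimensional $*$-representations $\sigma_\lambda\to\sigma$ in the point-strong topology yields $\sigma_\lambda\circ q\to\sigma\circ q$ with $(\sigma_\lambda\circ q)(\rC^*_{max}(\A))\H=\sigma_\lambda(\fB)\H$ still finite-dimensional.

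For ${\rm (i)}\Rightarrow{\rm (ii)}$: Corollary~\ref{C:RFDmaxiffmaxRFD} gives $(\fR(\A),\upsilon)\sim(\rC^*_{max}(\A),\mu)$, and since the maximal $\rC^*$-cover dominates every cover, $(\rC^*(\iota(\A)),\iota)\preceq(\rC^*_{max}(\A),\mu)\sim(\fR(\A),\upsilon)$ for every completely isometric $\iota$. For ${\rm (ii)}\Rightarrow{\rm (iii)}$: the canonical direct sum $\upsilon_s=\bigoplus_{\rho\in\F}\rho$ is completely isometric and, being a direct sum of RFD representations, is RFD (the RFD version of \cite[Lemma~2.2]{clouatre2021finite}); applying (ii) to $\upsilon_s$ gives $(\fR_s(\A),\upsilon_s)\preceq(\fR(\A),\upsilon)$, and Lemma~\ref{L:RFDstarRFDorder} supplies the opposite inequality, so the covers are equivalent. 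For ${\rm (iii)}\Rightarrow{\rm (v)}$: by (iii) the algebra $\fR_s(\A)\cong\fR(\A)$ is RFD; given an RFD representation $\rho:\A\to B(\H)$, Proposition~\ref{P:RFDandStarRFDlifts}(i) yields $\theta':\fR_s(\A)\to B(\H)$ with $\theta'\circ\upsilon_s=\rho$, so $\theta:=\theta'\circ q_s$ (with $q_s:\rC^*_{max}(\A)\to\fR_s(\A)$ the surjection determined by $q_s\circ\mu=\upsilon_s$) satisfies $\theta\circ\mu=\rho$; since $\fR_s(\A)$ is RFD, $\theta'$ is an RFD $*$-representation by Theorem~\ref{T:RFDdensity}, hence so is $\theta$ by the recurring point above. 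For ${\rm (v)}\Rightarrow{\rm (iv)}$: an RFD representation $\rho$ has, by (v), an RFD $*$-representation lift $\theta$ to $\rC^*_{max}(\A)$, and Theorem~\ref{T:LiftStarRFD} forces $\rho$ to be $*$-RFD. For ${\rm (iv)}\Rightarrow{\rm (ii)}$: a completely isometric RFD representation $\iota:\A\to B(\H)$ is $*$-RFD by (iv), so Corollary~\ref{C:RFDmaxLiftsStarRFD} provides a $*$-representation $\theta:\fR(\A)\to B(\H)$ with $\theta\circ\upsilon=\iota$; then $\theta(\fR(\A))=\theta(\rC^*(\upsilon(\A)))=\rC^*(\iota(\A))$, which exhibits $(\rC^*(\iota(\A)),\iota)\preceq(\fR(\A),\upsilon)$.

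For the last assertion, assume additionally that every representation of $\A$ is RFD and that ${\rm (v)}$ holds; by the equivalences just proved, ${\rm (iv)}$ holds too, so every representation of $\A$ is both RFD and $*$-RFD. In particular every representation of $\A$ is $*$-RFD, whence $\rC^*_{max}(\A)$ is RFD by \cite[Theorem~3.3]{clouatre2021finite} (equivalently: by Theorem~\ref{T:LiftStarRFD} every $*$-representation of $\rC^*_{max}(\A)$ is then RFD, and Theorem~\ref{T:RFDdensity} applies), which is ${\rm (i)}$. The argument is largely bookkeeping on top of the earlier structural results; the step demanding the most care is ${\rm (iv)}\Rightarrow{\rm (ii)}$, where one must recover a factorization of $\rC^*$-covers from the $*$-RFD lifting property of Corollary~\ref{C:RFDmaxLiftsStarRFD}, together with the routine but genuinely needed facts that residual finite-dimensionality of $*$-representations is preserved under post-composition with the canonical surjections and that an arbitrary direct sum of RFD representations is again RFD.
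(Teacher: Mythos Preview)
Your proof is correct and follows essentially the same approach as the paper, invoking the same key ingredients (Lemma~\ref{L:RFDstarRFDorder}, Proposition~\ref{P:RFDandStarRFDlifts}, Theorem~\ref{T:LiftStarRFD}, Corollary~\ref{C:RFDmaxLiftsStarRFD}, and \cite[Theorem~3.3]{clouatre2021finite}). The only cosmetic difference is the order in which you traverse the cycle: you prove ${\rm (ii)}\Rightarrow{\rm (iii)}\Rightarrow{\rm (v)}\Rightarrow{\rm (iv)}\Rightarrow{\rm (ii)}$, whereas the paper proves ${\rm (ii)}\Rightarrow{\rm (iii)}\Rightarrow{\rm (iv)}\Rightarrow{\rm (v)}\Rightarrow{\rm (ii)}$; in particular your ${\rm (iv)}\Rightarrow{\rm (ii)}$ is verbatim the paper's ${\rm (v)}\Rightarrow{\rm (ii)}$ with the preliminary appeal to Theorem~\ref{T:LiftStarRFD} replaced by a direct use of (iv).
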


\begin{proof}
{\rm (i)}$\Rightarrow${\rm (ii)}: When $\rC^*_{max}(\A)$ is an RFD $\rC^*$-algebra, we have that $(\fR(\A), \upsilon)\sim(\rC^*_{max}(\A), \mu)$ by Corollary \ref{C:RFDmaxiffmaxRFD}. The statement is then trivial.

{\rm (ii)}$\Rightarrow${\rm (iii)}: The representation $\upsilon_s$ is RFD by \cite[Lemma 2.2]{clouatre2021finite}. So $(\fR_s(\A), \upsilon_s)\preceq(\fR(\A), \upsilon)$. By Lemma \ref{L:RFDstarRFDorder}, we infer that {\rm (iii)} holds.

{\rm (iii)}$\Rightarrow${\rm (iv)}: Let $\rho:\A\rightarrow B(\H)$ be an RFD representation. By Proposition \ref{P:RFDandStarRFDlifts}, there exists a $*$-representation $\theta:\fR_s(\A)\rightarrow B(\H)$ such that $\theta\circ\upsilon_s = \rho$. The assumption implies that $\fR_s(\A)$ is an RFD $\rC^*$-algebra. By \cite[Theorem 2.4]{exel1992finite}, $\theta$ is a $*$-RFD representation. Let $q: \rC^*_{max}(\A)\rightarrow \fR_s(\A)$ be the surjective $*$-representation satisfying $q\circ\mu = \upsilon_s$. Then $\theta\circ q$ is also a $*$-RFD representation. Hence, by Theorem \ref{T:LiftStarRFD}, we obtain that \[\theta\circ q\circ \mu = \theta\circ\upsilon_s=\rho\]is a $*$-RFD representation of $\A$.

{\rm (iv)}$\Rightarrow${\rm (v)}: This is Theorem \ref{T:LiftStarRFD}.

{\rm (v)}$\Rightarrow${\rm (ii)}: Let $\iota:\A\rightarrow B(\H)$ be a completely isometric RFD representation. By assumption, there is an RFD $*$-representation $\theta:\rC^*_{max}(\A)\rightarrow B(\H)$ satisfying $\theta\circ\mu = \iota$. By Theorem \ref{T:LiftStarRFD}, $\iota$ is in fact a $*$-RFD representation. Whence, by Corollary \ref{C:RFDmaxLiftsStarRFD}, there is a $*$-representation $\beta:\fR(\A)\rightarrow B(\H)$ such that $\beta\circ\upsilon = \iota$. We have that \[\beta(\fR(\A)) = \beta(\rC^*(\upsilon(\A))) = \rC^*(\iota(\A))\]and so $(\rC^*(\iota(\A)), \iota)\preceq(\fR(\A), \upsilon)$.

Now, assume that every representation of $\A$ is RFD and that {\rm (v)} holds. In particular, $\mu$ is an RFD representation and, by assumption, the identity representation of $\rC^*_{max}(\A)$ is RFD. Then, Proposition \ref{P:cisRFD} implies that $\rC^*_{max}(\A)$ is RFD.
\end{proof}

We conclude by remarking that, to show the validity of Question \ref{Q:main}, it suffices to prove two statements. Namely, it would be equivalent to show that {\rm (i)} every representation of $\A$ is RFD and {\rm (ii)} every RFD representation of $\A$ is $*$-RFD. If {\rm (i)} and {\rm (ii)} are true, then $\rC^*_{max}(\A)$ is RFD by \cite[Theorem 3.3]{clouatre2021finite}. We have not been able to find a counterexample to {\rm (ii)}, but despite this, there is essentially a unique possibility which could arise.

\begin{remark}\label{R:RFDbutNotStarRFD}
Let $\A$ be a unital operator algebra and suppose that $\iota:\A\rightarrow B(\H)$ is a unital completely isometric RFD representation. Let $\pi_\lambda:\A\rightarrow B(\H), \lambda\in\Lambda,$ be a net of completely contractive representations such that $\rC^*(\pi_\lambda(\A))\H$ is finite-dimensional and $(\pi_\lambda(a))_\lambda$ converges strongly to $\iota(a)$ for every $a\in\A$. For each $\lambda$, let $P_\lambda$ be the finite-rank projection onto \[\E_\lambda = \overline{\rC^*(\pi_\lambda(\A))\H}.\]We utilize the following standard fact: whenever $F\in B(\H)$ is a finite-rank operator and $(S_\lambda)_\lambda$ is a net in $B(\H)$ which converges strongly to $S$, then $(FS_\lambda^*)_\lambda$ converges to $FS^*$ in the norm topology of $B(\H)$. So \[\lim_\lambda P_\mu\pi_\lambda(a)^* = P_\mu\iota(a)^*, \ \ \ \ \ \ \mu\in\Lambda,a\in\A,\]in the norm topology of $B(\H)$.

If $(\pi_\lambda(a)^*)_\lambda$ converges strongly to an operator $T(a)\in B(\H)$ for some $a\in\A,$ then $P_\mu T(a) = P_\mu\iota(a)^*$ for each $\mu\in\Lambda$.  By \cite[Lemma 3.1]{exel1992finite}, the net $(P_\lambda)_\lambda$ converges strongly to the identity operator and so we obtain that $T(a) = \iota(a)^*$.
\end{remark}

\section{Hadwin liftings for operator algebras}\label{S:Hadwin}

To close, we obtain a non self-adjoint version of Hadwin's characterization of separable RFD $\rC^*$-algebras \cite{hadwin2014lifting}. We will recount the details of their work here.

Let $\{ e_n : n\in\bN\}$ be an orthonormal basis for $\ell^2$. For each $n\in\bN$, let $P_n$ be the orthogonal projection onto the linear span of $\{ e_1, \ldots, e_n\}$ and let $\M_n = P_n B(\ell^2)P_n$. Let $\fB$ be the $\rC^*$-subalgebra of $\prod_{n=1}^\infty\M_n$ consisting of elements $(T_n)_{n\geq1}$ for which there is an operator $T\in B(\ell^2)$ such that \[*\sot\lim_{n\rightarrow\infty}T_n= T.\]Define a $*$-representation \[\pi:\fB\rightarrow B(\ell^2), \ \ \ \ \ (T_n)_{n\geq1}\mapsto *\sot\lim_{n\rightarrow\infty}T_n.\]It is routine to check that $\pi$ is a surjective unital $*$-representation \cite[Lemma 1]{hadwin2014lifting}. Let $\A$ be an operator algebra. We say that a representation $\rho:\A\rightarrow B(\ell^2)$ is \emph{$*$-liftable in the sense of Hadwin} if there is a representation $\tau:\A\rightarrow\fB$ such that $\pi\circ\tau = \rho$. Recall that $\widetilde{\fA}$ denotes the unitization of a $\rC^*$-algebra $\fA$. Hadwin's result is then stated as follows \cite[Theorem 11]{hadwin2014lifting}:

\begin{theorem}\label{T:Hadwin}
Let $\fA$ be a separable $\rC^*$-algebra. Then, $\fA$ is an RFD $\rC^*$-algebra if and only if every unital $*$-representation $\sigma:\widetilde{\fA}\rightarrow B(\ell^2)$ is $*$-liftable in the sense of Hadwin.
\end{theorem}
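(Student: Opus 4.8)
The plan is to prove the two implications separately, disposing of the (easy) backward direction first. Suppose every unital $*$-representation $\sigma\colon\widetilde\fA\to B(\ell^2)$ is $*$-liftable in the sense of Hadwin. Since $\fA$ is separable, so is $\widetilde\fA$, hence $\widetilde\fA$ admits a faithful unital $*$-representation on a separable Hilbert space; passing to a countably infinite multiple if needed, we may take this to be a representation $\sigma\colon\widetilde\fA\to B(\ell^2)$. By hypothesis there is a $*$-representation $\tau\colon\widetilde\fA\to\fB$ with $\pi\circ\tau=\sigma$, and since $\sigma$ is injective so is $\tau$. But $\fB$ is a $\rC^*$-subalgebra of $\prod_{n=1}^\infty\M_n\cong\prod_{n=1}^\infty\bM_n$, which is an RFD $\rC^*$-algebra, and residual finite-dimensionality passes to $\rC^*$-subalgebras; hence $\widetilde\fA$ is RFD, and so is its closed ideal $\fA$.

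For the forward direction, assume $\fA$ is RFD. The first step is to note $\widetilde\fA$ is RFD as well: a $*$-embedding $\fA\hookrightarrow\prod_\lambda\bM_{r_\lambda}$ extends to the unital $\rC^*$-subalgebra $\fA+\bC 1$, which is $*$-isomorphic to $\widetilde\fA$. Now fix a unital $*$-representation $\sigma\colon\widetilde\fA\to B(\ell^2)$; we must construct $\tau\colon\widetilde\fA\to\fB$ with $\pi\circ\tau=\sigma$. By Theorem \ref{T:RFDdensity}, $\sigma$ is a residually finite-dimensional $*$-representation, so it is the point-strong limit --- equivalently, since these are $*$-representations, the point-$*$-strong limit --- of a net of finite-dimensional $*$-representations of $\widetilde\fA$ on $\ell^2$. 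Restricting each to its essential subspace --- a finite-dimensional subspace on which it acts unitally --- and using separability of $\widetilde\fA$ to replace the net by a sequence, I would obtain unital finite-dimensional $*$-representations $\rho_n^0$ on finite-dimensional subspaces $\E_n\subseteq\ell^2$ such that $\rho_n:=\rho_n^0\oplus 0$ (as an operator on $\ell^2=\E_n\oplus\E_n^\perp$) satisfies $\rho_n\to\sigma$ point-$*$-strongly.

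The crux is to reposition each $\E_n$ inside a coordinate subspace without destroying the approximation. Since every vector of $\ell^2$ has vanishing tail, for any $\delta>0$ and all sufficiently large $N$ the projection onto $\E_n$ is within $\delta$ of the projection onto some $\E_n'\subseteq\spn\{e_1,\dots,e_N\}$, so there is a unitary $V_n$ with $\lVert V_n-I\rVert$ as small as we like and $V_n\E_n=\E_n'$. Setting $\tau_n:=V_n\rho_n(\cdot)V_n^{*}$ and using that $*$-representations are contractive, one has
\[
\lVert\tau_n(a)\xi-\sigma(a)\xi\rVert\;\le\;2\lVert a\rVert\,\lVert V_n-I\rVert\,\lVert\xi\rVert+\lVert\rho_n(a)\xi-\sigma(a)\xi\rVert
\]
together with the same bound for adjoints, so $\tau_n\to\sigma$ point-$*$-strongly while each $\tau_n$ is a (degenerate) $*$-representation into $\M_{N_n}$ with $N_n$ increasing. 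Finally put $S_m:=\tau_n$ for $N_n\le m<N_{n+1}$, so that $S_m\in\M_m$ and $S_m\to\sigma$ point-$*$-strongly; then $\tau(a):=(S_m(a))_{m\ge 1}$ is a $*$-representation of $\widetilde\fA$ into $\prod_m\M_m$ with image in $\fB$ and $\pi\circ\tau=\sigma$, which completes the proof.

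I expect the main obstacle to be this last, concrete step of the forward direction: one must simultaneously turn the net supplied by Theorem \ref{T:RFDdensity} into a sequence, conjugate each finite-dimensional essential subspace into a coordinate subspace by a near-identity unitary (so that the result sits in some $\M_{N_n}$), and diagonalize so that the $n$-th approximant lands in $\M_m$ for every large $m$ --- all while preserving point-$*$-strong convergence to $\sigma$. The backward direction is comparatively soft, using only the existence of a faithful unital representation of $\widetilde\fA$ on $\ell^2$ and the permanence of residual finite-dimensionality under passage to $\rC^*$-subalgebras.
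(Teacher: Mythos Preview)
The paper does not supply its own proof of Theorem \ref{T:Hadwin}; it is quoted verbatim from Hadwin \cite[Theorem 11]{hadwin2014lifting} and invoked as a black box in the proof of Theorem \ref{T:nonSAHadwin}. There is therefore nothing in the present paper to compare your argument against.

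That said, your proof is sound and follows the natural strategy. The backward direction is correct and soft, exactly as you say. For the forward direction, the three moves you isolate---passing from a net to a sequence (legitimate because the point-strong topology on $*$-representations of a separable $\rC^*$-algebra on $\ell^2$ is metrizable), conjugating each finite-dimensional essential subspace $\E_n$ into a coordinate block via a unitary $V_n$ close to $I$ (standard: if two finite-rank projections are within distance $<1$ they are unitarily conjugate by a unitary close to the identity), and then padding so that the $m$-th entry lands in $\M_m$---are precisely the manoeuvres in Hadwin's original proof. Your displayed estimate for $\lVert\tau_n(a)\xi-\sigma(a)\xi\rVert$ is correct. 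One cosmetic point: you assert ``$N_n$ increasing'' but should note explicitly that at stage $n$ you are free to take $N_n$ arbitrarily large (and in particular $>N_{n-1}$), so the intervals $[N_n,N_{n+1})$ tile all sufficiently large integers and the definition of $S_m$ is unambiguous; also set $S_m=0$ for $m<N_1$ so that $\tau$ is defined on all of $\widetilde\fA$.
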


We now present our non-self adjoint version to Theorem \ref{T:Hadwin}.

\begin{theorem}\label{T:nonSAHadwin}
Let $\A$ be a separable operator algebra. Then, $\rC^*_{max}(\A)$ is an RFD $\rC^*$-algebra if and only if every unital representation $\rho:\widetilde{\A}\rightarrow B(\ell^2)$ is $*$-liftable in the sense of Hadwin.
\end{theorem}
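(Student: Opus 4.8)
The plan is to reduce the claim to Hadwin's theorem (Theorem \ref{T:Hadwin}) applied to the separable $\rC^*$-algebra $\rC^*_{max}(\A)$, moving between representations of $\widetilde{\A}$ and $*$-representations of $\widetilde{\rC^*_{max}(\A)}$ via the universal property of the maximal $\rC^*$-cover. First I would collect the preliminaries. Since $\A$ is separable, $\mu(\A)$ is separable and generates $\rC^*_{max}(\A)$, so $\rC^*_{max}(\A)$, and hence $\widetilde{\rC^*_{max}(\A)}$, is a separable $\rC^*$-algebra. By \cite[2.4.3]{blecher2004operator}, if $\widetilde\mu\colon\widetilde{\A}\to\widetilde{\rC^*_{max}(\A)}$ denotes the canonical unital completely isometric embedding, then $(\widetilde{\rC^*_{max}(\A)},\widetilde\mu)$ is a maximal $\rC^*$-cover of $\widetilde{\A}$; in particular $\widetilde\mu(\widetilde{\A})$ generates $\widetilde{\rC^*_{max}(\A)}$ as a $\rC^*$-algebra, and every representation $\rho\colon\widetilde{\A}\to B(\H)$ lifts to a unique $*$-representation $\theta\colon\widetilde{\rC^*_{max}(\A)}\to B(\H)$ with $\theta\circ\widetilde\mu=\rho$, which is unital whenever $\rho$ is. Applying Theorem \ref{T:Hadwin} with $\fA=\rC^*_{max}(\A)$ then says that $\rC^*_{max}(\A)$ is RFD if and only if every unital $*$-representation $\sigma\colon\widetilde{\rC^*_{max}(\A)}\to B(\ell^2)$ is $*$-liftable in the sense of Hadwin. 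It therefore remains only to match the two liftability conditions.

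For the forward implication, suppose $\rC^*_{max}(\A)$ is RFD and let $\rho\colon\widetilde{\A}\to B(\ell^2)$ be a unital representation. Lift $\rho$ to a unital $*$-representation $\theta\colon\widetilde{\rC^*_{max}(\A)}\to B(\ell^2)$ with $\theta\circ\widetilde\mu=\rho$. By Theorem \ref{T:Hadwin}, $\theta$ is $*$-liftable in the sense of Hadwin, so there is a $*$-representation $\widehat\theta\colon\widetilde{\rC^*_{max}(\A)}\to\fB$ with $\pi\circ\widehat\theta=\theta$. Then $\tau:=\widehat\theta\circ\widetilde\mu\colon\widetilde{\A}\to\fB$ is a representation and $\pi\circ\tau=\theta\circ\widetilde\mu=\rho$, so $\rho$ is $*$-liftable in the sense of Hadwin.

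For the converse, suppose every unital representation $\widetilde{\A}\to B(\ell^2)$ is $*$-liftable in the sense of Hadwin; by the reduction above it suffices to show that an arbitrary unital $*$-representation $\sigma\colon\widetilde{\rC^*_{max}(\A)}\to B(\ell^2)$ is $*$-liftable. Put $\rho:=\sigma\circ\widetilde\mu\colon\widetilde{\A}\to B(\ell^2)$, a unital representation, and choose a representation $\tau\colon\widetilde{\A}\to\fB$ with $\pi\circ\tau=\rho$. By the universal property of the maximal $\rC^*$-cover of $\widetilde{\A}$, $\tau$ lifts to a $*$-representation $\widehat\tau\colon\widetilde{\rC^*_{max}(\A)}\to\rC^*(\tau(\widetilde{\A}))\subseteq\fB$ with $\widehat\tau\circ\widetilde\mu=\tau$. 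Now $\pi\circ\widehat\tau$ and $\sigma$ are $*$-representations of $\widetilde{\rC^*_{max}(\A)}$ agreeing on $\widetilde\mu(\widetilde{\A})$, since $(\pi\circ\widehat\tau)\circ\widetilde\mu=\pi\circ\tau=\rho=\sigma\circ\widetilde\mu$; as $\widetilde\mu(\widetilde{\A})$ generates $\widetilde{\rC^*_{max}(\A)}$, we get $\pi\circ\widehat\tau=\sigma$. Hence $\widehat\tau$ witnesses that $\sigma$ is $*$-liftable in the sense of Hadwin, completing the proof.

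Most of the argument is bookkeeping with the universal property; the step I expect to need the most care is the ``promotion'' in the converse: one must observe that lifting the operator-algebra representation $\tau$ through the maximal $\rC^*$-cover produces a $*$-representation whose range $\rC^*(\tau(\widetilde{\A}))$ sits inside $\fB$ itself, and then that the resulting $\pi\circ\widehat\tau$ equals $\sigma$ on the nose (not merely up to weak equivalence), which is exactly where the generation/density argument is used. A minor point to verify separately is that the composite of the completely isometric map $\widetilde\mu$ with a $*$-representation is again a representation of $\widetilde{\A}$, legitimizing $\tau=\widehat\theta\circ\widetilde\mu$ in the forward direction.
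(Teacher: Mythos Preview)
Your proof is correct. The forward direction is essentially identical to the paper's argument: lift $\rho$ to the maximal $\rC^*$-cover, invoke Hadwin's theorem there, and compose the resulting lift with $\widetilde\mu$.

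The converse, however, takes a genuinely different route. The paper does \emph{not} go back to Hadwin's theorem; instead it argues via \cite[Theorem 3.3]{clouatre2021finite} that it suffices to show every representation of $\widetilde{\A}$ is $*$-RFD. It first shows that any unital $\rho:\widetilde{\A}\to B(\ell^2)$ is $*$-RFD by reading off the coordinate maps $\tau_n$ of the Hadwin lift $\tau$ as the approximating net, and then reduces a general representation to this case by decomposing the Hilbert space into separable reducing subspaces and applying \cite[Lemma 2.2]{clouatre2021finite}. Your argument instead promotes the Hadwin lift $\tau:\widetilde{\A}\to\fB$ to a $*$-representation $\widehat\tau$ of $\widetilde{\rC^*_{max}(\A)}$ using the universal property, observes that its range $\rC^*(\tau(\widetilde{\A}))$ lands in the $\rC^*$-algebra $\fB$, and then checks $\pi\circ\widehat\tau=\sigma$ on generators. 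This is shorter and more self-contained: it avoids both the external input from \cite{clouatre2021finite} and the separable-reduction step. The paper's approach, on the other hand, makes the link to $*$-RFD representations explicit, which is thematically aligned with Section~\ref{S:RFDrepns}. The ``promotion'' step you flagged as needing care is indeed the only substantive point, and your justification (range contained in $\fB$, then equality via density of generators) is exactly right.
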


\begin{proof}
$(\Rightarrow)$ If $\rC^*_{max}(\A)$ is an RFD $\rC^*$-algebra, then $\rC^*_{max}(\widetilde{\A})$ is also RFD by \cite[2.4.3]{blecher2004operator}. Hence, we have that $(\rC^*_{max}(\widetilde{\A}), \mu_1)$ is the RFD-maximal $\rC^*$-cover of $\widetilde{\A}$.

Let $\rho:\widetilde{\A}\rightarrow B(\ell^2)$ be a unital representation. Then, there is a $*$-representation $\theta: \rC^*_{max}(\widetilde{\A})\rightarrow B(\ell^2)$ such that $\theta\circ\mu_1 = \rho$. As in Remark \ref{R:ConcreteRFDunital}, $\mu_1$ may taken to be unital. Hence, $\theta$ is also unital. As $\rC^*_{max}(\widetilde{\A})$ is a separable RFD $\rC^*$-algebra, Theorem \ref{T:Hadwin} yields that there is a unital $*$-representation $\sigma:\rC^*_{max}(\widetilde{\A})\rightarrow\fB$ such that $\pi\circ\sigma = \theta$. Then we see that \[\pi\circ\sigma\circ\mu_1 = \theta\circ\mu_1 = \rho.\]Taking $\tau = \sigma\circ\mu_1$ shows that $\rho$ is $*$-liftable in the sense of Hadwin.

$(\Leftarrow)$ By \cite[2.4.3]{blecher2004operator} and \cite[Theorem 3.3]{clouatre2021finite}, it suffices to show that every representation of $\widetilde{\A}$ is $*$-RFD. We first show that every unital representation $\rho:\widetilde{\A}\rightarrow B(\ell^2)$ is $*$-RFD. As $\rho$ is $*$-liftable in the sense of Hadwin, there is a representation $\tau:\widetilde{\A}\rightarrow\fB$ such that $\pi\circ\tau = \rho$. By definition of $\fB$, we see that for each $n\in\bN$, there is a representation $\tau_n:\widetilde{\A}\rightarrow\M_n$ and $\tau(a) = (\tau_n(a))_{n\geq 1}$. Note that $\rC^*(\tau_n(\widetilde{\A}))\ell^2\subset\M_n$ is finite-dimensional for each $n\in\bN$. Also, \[\rho(a) = \pi\circ\tau(a) = *\sot\lim_{n\rightarrow\infty} \tau_n(a), \ \ \ \ \ \ a\in\widetilde{\A},\] and so $\rho$ is a $*$-RFD representation.
 
Now let $\rho:\widetilde{\A}\rightarrow B(\H)$ be an arbitrary representation. If $\rho$ is non-unital, then we may express $\rho = \widehat{\rho}\oplus 0$ where $\widehat{\rho}$ is a unital representation of $\widetilde{\A}$. We have that $\rho$ is $*$-RFD if and only if $\widehat{\rho}$ is $*$-RFD. Whence, we may assume that $\rho$ is unital. Further, we may assume the non-trivial case of when $\H$ is infinite-dimensional. For each $h\in\H$, let $\H_h = \ol{\text{span}}\{\rC^*(\rho(\widetilde{\A}))h\}$. Note that $\H_h$ is a separable Hilbert space which is reducing for $\rho(\widetilde{\A})$. So we may express $\rho = \bigoplus_{h\in\H}\rho\mid_{\H_h}$ and, for each $h\in\H$, $\rho\mid_{\H_h}$ is a representation of $\widetilde{\A}$ on a separable Hilbert space. Hence, $\rho\mid_{\H_h}$ is unitarily equivalent to a unital representation of $\widetilde{\A}$ on $\ell^2$ and so $\rho\mid_{\H_h}$ is $*$-RFD. Therefore $\rho$ is a $*$-RFD representation by \cite[Lemma 2.2]{clouatre2021finite}.
\end{proof}

\bibliographystyle{plain}
\bibliography{RFDmax}
	
\end{document}